\newtheorem{result}{Main result}
\newcommand{\defi}[1]{\emph{#1}}
\newcommand{\TODO}[1]{}
\numberwithin{equation}{section}
\newtheorem{teo}{Theorem}[section]
\newtheorem{theorem}[teo]{Theorem}
\newtheorem{thm}[teo]{Theorem}
\newtheorem{lemma}[teo]{Lemma}
\newtheorem{cor}[teo]{Corollary}
\newtheorem{prop}[teo]{Proposition}
\theoremstyle{definition}
\newtheorem{remark}[teo]{Remark}
\newcommand{\diam}[1]{\operatorname{diam}({#1})}
\DeclareMathOperator{\freq}{freq}
\newcommand{\vol}[1]{\operatorname{vol}({#1})}
\def\R{\mathcal{R}}
\newcommand{\norm}[1]{\Vert#1\Vert}
\newcommand{\abs}[1]{\lvert#1\rvert}
\newcommand{\interior}[1]{\mathring{#1}}
\newcommand{\leb}[1]{\operatorname{vol}({#1})}
\newcommand{\Aa}{{\mathcal A}}
\newcommand{\B}{{\mathcal B}}
\newcommand{\D}{{\mathcal D}}
\newcommand{\adh}[1]{\overline{#1}}
\newcommand{\NN}{{\bf N}}
\newcommand{\RR}{\mathbb{R}}
\def\NN{\mathbb{N}}
\def\RR{\mathbb{R}}
\def\A{\mathcal{A}}
\def\B{\mathcal{B}}
\def\D{\mathcal{D}}
\def\V{\mathcal{V}}
\def\P{\mathcal{P}}
\def\R{\mathcal{R}}
\def\S{\mathcal{S}}
\def\T{\mathcal{T}}
\newcommand{\pp}{\ensuremath{\mathbf{p}}}
\def\b {\beta}
\begin{document}
\title{Tower systems for Linearly repetitive Delone sets}

\author{JOS\'E  ALISTE-PRIETO and DANIEL CORONEL}
    
    


\maketitle
\begin{abstract}
In this paper we study linearly repetitive Delone sets and prove, following the work of Bellissard, Benedetti and Gambaudo, that the hull of a linearly repetitive Delone set admits a properly nested sequence of box decompositions (tower system) with strictly positive and uniformly bounded (in size and norm) transition matrices. This generalizes a result of Durand for linearly recurrent symbolic systems. Furthermore, we apply this result to give a new proof of a classic estimation of Lagarias and Pleasants on the rate of convergence of patch-frequencies. 
\end{abstract}

\section{Introduction}  
Delone sets arise naturally as mathematical models for the description of solids. In this modelization, the solid is supposed to be infinitely extended and its atoms are represented by points. These atoms interact through a potential (for example a Lennard-Jones potential). For a given specific energy, Delone sets are good candidates to describe the ground state configuration: uniform discreteness corresponds to the existence of a minimum distance between atoms due to the repulsion forces between nuclei, and relative density corresponds to the fact that empty regions can not be arbitrarily big because of the contraction forces. In perfect crystals, atoms are ordered in a repeating pattern extending in all three-dimensions and can be modeled by lattices in $\RR^3$. Quasi-crystalline solids are those whose X-ray diffraction image have sharp spots indicating \emph{long-range order} but without having a full-lattice of periods. Typically, they exhibit symmetries that are impossible for a perfect crystal (see e.g. \cite{shechtman}).

From the mathematical and physical point of view, linear repetitivity (introduced by Lagarias and Pleasants in \cite{LP}) has become a key feature (see e.g. \cite{Sol2,DaLe}), and many known examples of quasi-crystalline solids may be modeled using linearly repetitive Delone sets.

A standard tool in the study of a Delone set $X$ is provided by its hull $\Omega$ and the natural $\RR^d$-action on it. The hull is defined as an appropriate closure of the family $\{X - v : v\in\RR^d\}$ and $\RR^d$ acts over the hull by translation. From this point of view, the hull can be regarded as a  generalization to higher dimensions of the standard orbit-closure construction for aperiodic sequences in symbolic dynamics (see e.g. \cite{Robi}). In this context, linear repetitivity corresponds to linear recurrence (see \cite{Durand}).

A powerful combinatorial tool in the study of linearly recurrent subshifts is provided by  Kakutani-Rohlin towers (see e.g. \cite{HPS,CDHM}). In particular, a fundamental result in this context is the following (see \cite{Durand,CDHM}): 
\begin{teo}
\label{teoCDHM}
Let $(X,T)$ be an aperiodic linearly recurrent subshift. Then there exist a sequence of Kakutani-Rohlin (KR) partitions and $M>0$ 
such that 
\begin{enumerate}[(i)]
\item the number of KR towers of level $n$ is uniformly bounded by $M$,
\item every KR tower of level $n$ crosses all the KR towers of level $n-1$ and the total number of these crossings is uniformly bounded by $M$. 
\end{enumerate}
\end{teo}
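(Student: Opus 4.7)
The strategy is the return-word construction of Kakutani--Rohlin partitions going back to \cite{Durand}, with the scales chosen to grow fast enough that the strict primitivity condition (ii) is forced by linear recurrence.

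First, I record the key combinatorial consequence of linear recurrence. If $L$ is the linear recurrence constant, a standard argument yields a constant $K = K(L)$ such that every factor $w$ of $X$ has at most $K$ distinct return words and each return word has length at most $K|w|$. This follows by counting: every factor of length $L|w|$ contains an occurrence of $w$, which bounds both the number and the lengths of the return words in terms of $L$ alone.

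Next I build the nested KR partitions. Fix $x \in X$ and a rapidly increasing integer sequence $(\ell_n)$ to be chosen below; let $u_n$ be the length-$\ell_n$ prefix of $x$, and set $B_n := [u_n]$, so $B_{n+1} \subseteq B_n$ automatically. The first-return map to $B_n$ partitions $B_n$ into clopen atoms $B_{n,r}$ indexed by $r \in \R(u_n)$, and the KR partition $\P_n$ of $X$ consists of the towers $\{T^j B_{n,r} : 0 \le j < |r|\}$. Because $B_{n+1} \subseteq B_n$, each return word of $u_{n+1}$ decomposes uniquely into a concatenation of return words of $u_n$; this concatenation is precisely the crossing structure of $\P_{n+1}$ over $\P_n$. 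Property (i) is immediate from $\#\R(u_n) \le K$.

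For property (ii), impose the growth condition $\ell_{n+1} \ge LK\ell_n$. Then every factor of $X$ of length $\ell_{n+1}$ contains every factor of length $K\ell_n$, and in particular every element of $\R(u_n)$. Combining this with the lower bound on return-word lengths at level $n+1$ (supplied by aperiodicity, after possibly replacing the $u_n$ by bispecial or otherwise ``rigid'' factors so that $|r'| \ge \ell_{n+1}$ for $r' \in \R(u_{n+1})$), each $r' \in \R(u_{n+1})$ must contain every $r \in \R(u_n)$, giving strict positivity of the crossings. The total number of $u_n$-return-word occurrences inside a fixed $r' \in \R(u_{n+1})$ equals $|r'|$ divided by the average length of those return words; taking $\ell_{n+1} := \lceil LK\ell_n \rceil$ so that the ratio $\ell_{n+1}/\ell_n$ is also bounded above gives a uniform bound of order $LK^2$, which together with $K$ yields the desired universal $M$.

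\emph{Main obstacle.} The heart of the argument is condition (ii), specifically the lower bound --- that every level-$(n+1)$ tower meets every level-$n$ tower. Proper nesting alone does not give this; one must quantitatively tie the growth of $\ell_n$ to the recurrence constant $L$ and to the return-word bound $K$. A secondary technical issue is ensuring that the return words of $u_{n+1}$ are long enough relative to $\ell_{n+1}$; this can fail for arbitrary prefixes of $x$ (when $u_n$ has nontrivial bifix structure) and motivates choosing the $u_n$ with some care. Once these choices are in place, the two properties follow from the uniform bounds on numbers and lengths of return words.
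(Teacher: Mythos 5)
The paper does not actually prove this theorem: it is quoted as background, cited from Durand and Cortez--Durand--Host--Maass, and the paper's own contribution is the Delone-set generalization given in Theorems \ref{thm:towersLR} and \ref{thm:towersLR2}. That said, the proof of the generalization is exactly the higher-dimensional version of the return-word construction you sketch: the cylinder transversals $C_n = C_{X,s_n}$ with $s_n = K^n s_0$ play the role of your $B_n = [u_n]$ with $\ell_{n+1}\approx K\ell_n$, the Voronoi-cell box decompositions of Section \ref{proof:gambaudo} play the role of the first-return partitions, and the strict positivity and norm bounds on the transition matrices are obtained from the linear-repetitivity and repulsion estimates of Lemma 2.3 in precisely the way your Lemma \ref{prop:matrices}-style argument anticipates. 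So your route is the standard one and matches the paper in spirit.

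Where you need to tighten --- and where the paper's choice $K \geq 6L(L+1)^2$ is doing its work --- is the growth condition, and there are two genuine gaps. First, the condition $\ell_{n+1}\geq LK\ell_n$ is too weak: to force every level-$n$ return word $r$ to appear \emph{as a return word} inside a level-$(n+1)$ return word $r'$, you must find the full block $ru_n$ (of length up to $(L+1)\ell_n$) occurring in $r'$, not merely the word $r$; locating only $r$ does not guarantee it is followed by another occurrence of $u_n$ within $r'$ and hence counted in the decomposition. Second, the device of ``replacing the $u_n$ by bispecial or otherwise rigid factors so that $|r'|\ge\ell_{n+1}$'' is not available in general: the repulsion property only supplies a lower bound of the form $|r'|\geq \ell_{n+1}/(2(L+1))$ (this is the estimate $r(C_{Y,S})\geq S/2(L+1)$ of \eqref{eq:retbounds}), and one cannot improve this factor to $1$ by choosing the words better. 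The standard argument carries the $1/(2(L+1))$ loss through, which together with the $(L+1)\ell_n$ block length forces a growth ratio on the order of $L(L+1)^2$ --- exactly the order of the $K$ in Theorem \ref{thm:towersLR}. With these two adjustments your argument closes, and the bound on the number of crossings you give at the end is then correct.
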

An equivalent statement for Theorem \ref{teoCDHM} is that linearly recurrent subshifts admit nested sequences of KR partitions with transition matrices that are strictly positive and uniformly bounded in norm. As a corollary, for instance, it is easy to deduce that linearly recurrent subshifts are uniquely ergodic (see e.g. \cite{Durand,CDHM}). Other applications include the study of spectral properties of these systems (see \cite{CDHM,BDM,BDM2}).
  
In the context of Delone sets, tower systems were introduced in \cite{BBG}, and provide a proper generalization for sequence of KR towers (see Section \ref{sec:towers} for the definition). A natural question  we answer in this paper is whether an analog of Theorem \ref{teoCDHM} for linearly repetitive Delone systems and tower systems holds. To answer this question, we refine the construction in \cite{BBG} to linearly repetitive systems to obtain several estimations on parameters that control the growth of the towers and, therefore, also the norms of the associated transition matrices. Our main result is the following:   
\begin{result}[\emph{cf.} Theorem \ref{thm:towersLR2}]
Let $\Omega$ be the hull of a linearly repetitive Delone set $X$. Then there exist a tower system  and $M>0$  such that 
\begin{enumerate}[(i)]
\item  the number of boxes at each level is uniformly bounded by $M$,
\item  every box of level $n$ crosses every box of level $n-1$ and the total number of crossings is uniformly bounded by  $M$. 
\end{enumerate}
\end{result}

For similar constructions, we refer to \cite{priebe1,PS,LeSt,Besbes,GMPS,Forrest, CGM}.

In \cite{BG,GaMar}, it is proved (using standard arguments) that Delone systems satisfying the assertions of the main result are uniquely ergodic. Hence, as a corollary of this result and our main result, we obtain an alternative proof of the unique ergodicity of the hull of an aperiodic linearly repetitive Delone set $X$. This is equivalent to say (see e.g. \cite{LMS}) that $X$ has uniform patch frequencies, i.e., for every patch 
$\pp$ in $X$, the number $n_{\pp}(U)$ of patches of $X$ that are equivalent to $\pp$ and whose center is included in a $d$-cube $U$ of 
side $N$, divided by the volume of $U$ converges to a limit, called the \emph{frequency} of $\pp$ and denoted by $\freq(\pp)$, when $N$ goes to infinity.
The existence of uniform patch frequencies is well-known for linearly repetitive Delone sets, and it is a consequence of the following 
stronger result of Lagarias and Plesants \cite{LP}:
\begin{teo}[Lagarias-Pleasants' theorem]
\label{teo:LPintro}
Let $X$ be a linearly repetitive Delone set. Then, $X$ has uniform patch frequencies. Moreover, there exists $\delta>0$ such that,
for every patch $\pp$ of $X$,
\begin{equation*}
\left|\frac{n_\pp(D_N)}{\vol{D_N}} - \operatorname{freq}(\pp)\right| = O(N^{-\delta}),
\end{equation*}
 where $D_N$ is either a $d$-cube with side $N$ or a ball  of radius $N$.
\end{teo}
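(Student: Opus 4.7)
The plan is to deduce Theorem \ref{teo:LPintro} from the Main Result by exploiting the combinatorial control that a tower system $(\mathcal{T}_n)_{n\geq 0}$ provides over patch counts. First I would fix a patch $\pp$ of $X$ and, for each box-type $B$ at level $n$, set $N_\pp(B)$ equal to the number of occurrences of $\pp$ with center inside $B$. Because condition (ii) of the Main Result forces every box of level $n$ to be a translate of a disjoint union of boxes of level $n-1$ of at most $M$ distinct types, both the volume vector and the vector $\bigl(N_\pp(B)\bigr)_B$ evolve through a non-stationary product of strictly positive $M\times M$ transition matrices with uniformly bounded entries (up to a small correction counting copies of $\pp$ straddling the boundary of an internal sub-box).

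Next I would apply a Birkhoff contraction argument to this recursion. Strict positivity together with uniformly bounded entries of the transition matrices yield a uniform bound $\tau\in(0,1)$ on the projective contraction coefficient in the Hilbert metric on the positive cone. It follows that the normalized quantities $N_\pp(B)/\vol{B}$ for boxes $B$ at level $n$ converge, at some exponential rate $\rho^n$ with $\rho\in(0,1)$, to a common limit $\ell(\pp)$. The bounded complexity of the tower ensures that the boundary-straddling corrections decay fast enough to preserve this rate, and one obtains
\begin{equation*}
\bigl|N_\pp(B)-\ell(\pp)\,\vol{B}\bigr|\leq C\,\rho^n\,\vol{B}
\end{equation*}
uniformly over boxes $B$ at level $n$.

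Then I would use the tower at a suitable level $n=n(N)$ to tile $D_N$ up to its boundary. Writing
\begin{equation*}
n_\pp(D_N)=\sum_{B\subset D_N}N_\pp(B)+\mathrm{Err}_\partial(N,n),
\end{equation*}
the boundary contribution is bounded by $O(N^{d-1}\Delta_n)$, where $\Delta_n$ denotes the diameter of a box of level $n$. Since, by the Main Result, volumes grow at a uniformly bounded geometric rate in $n$, one has $\Delta_n\asymp\mu^n$ for some $\mu>1$, so that
\begin{equation*}
\left|\frac{n_\pp(D_N)}{\vol{D_N}}-\ell(\pp)\right|\leq C\bigl(\rho^n+\Delta_n/N\bigr).
\end{equation*}
Choosing $n\asymp\log N$ so that the two error terms balance yields the bound $O(N^{-\delta})$ with $\delta=\log(1/\rho)/\bigl(\log\mu+\log(1/\rho)\bigr)>0$. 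Uniform convergence combined with unique ergodicity (itself a consequence of the Main Result) identifies $\ell(\pp)=\freq(\pp)$.

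The main obstacle will be extracting from condition (ii) a \emph{uniform} Birkhoff contraction rate for arbitrary non-stationary products of the transition matrices, and controlling the cumulative corrections from boundary-crossings at each level without spoiling the exponential decay. Once this is in place, condition (i) keeps the dimensions of the matrices bounded and prevents constants from deteriorating with $n$, so that the remaining geometric estimates — the tiling decomposition of $D_N$ and the boundary-layer bound — are routine.
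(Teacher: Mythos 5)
Your proposal follows the same overall strategy as the paper: use the Main Result to obtain strictly positive, uniformly bounded transition matrices; derive exponential mixing for their non-stationary products; and decompose $D_N$ into tower tiles to convert this mixing into a deviation bound. The paper encodes the mixing via an explicit non-stationary Markov chain $\beta_n$ with transition probabilities $q_{j,i}^{(n)}=\nu(C_{n,i})m_{i,j}^{(n)}/\nu(C_{n-1,j})$ and invokes Seneta's ergodicity-coefficient bound (its Lemma~\ref{lem:c} and Lemma~\ref{lem:Qc}), whereas you appeal to the Birkhoff contraction of the Hilbert projective metric on the positive cone. These two mechanisms are essentially interchangeable: both exploit strict positivity plus a uniform bound on the entries to produce a rate $\rho^{n-m}$ for products $P(n,m)$, and both correctly identify the limit $\ell(\pp)$ with $\freq(\pp)=\nu(C_\pp)$, which is the content of the paper's Lemma~\ref{lem:decnp} and Lemma~\ref{lem:ue}.

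Where the proofs genuinely diverge is the geometric decomposition of $D_N$. You tile $D_N$ at a single level $n\asymp\log N$, take a boundary error of order $N^{d-1}s_n$ and an internal error of order $N^d\rho^n$, and balance to get $\delta=\log(1/\rho)/\bigl(\log K+\log(1/\rho)\bigr)$, which in the paper's notation is $\delta_{\mathfrak{T}}/(1+\delta_{\mathfrak{T}})$. The paper instead performs the telescoping decomposition of Lemma~\ref{lem:keyconvergence}: fill $D_N$ with the largest level-$n_1$ tiles that fit, fill the remaining skin with level-$(n_1-1)$ tiles, and so on down to a fixed level $n_0=n_0(\pp)$. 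This drives the leftover residual $W$ down to $O(N^{d-1})$ with no factor of $s_n$, and the resulting geometric series over levels is dominated by its top term, yielding the sharper exponent $\delta_{\mathfrak{T}}$ rather than $\delta_{\mathfrak{T}}/(1+\delta_{\mathfrak{T}})$. Since both exponents are positive, your single-scale argument does prove the theorem as stated (existence of some $\delta>0$), but the paper's hierarchical decomposition extracts a strictly better rate from the very same transition-matrix data, and this is the one ingredient of the published proof that your outline would fail to recover.
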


In this paper, we give an alternative proof of Lagarias and Pleasants' theorem as an application of our main result. A key step in the proof
is the introduction of a Markov chain associated with the tower system, whose mixing rate is related to the constant $\delta$. We remark that in the original proof of Lagarias and Pleasants, the constant $\delta$ depends on the geometry of $D_N$. Our new approach suggests that the $\delta$ should be defined purely in terms of $X$ and the tower system, and so we expect that the proof can be extended to provide estimations for more general  additive ergodic theorems in \cite{DaLe,Besbesb,LeSt}. We also remark that the proof can be applied to self-similar systems, with better bounds, and this will be the subject of a forthcoming paper. 

We finish the introduction by giving the organization of the paper. In Section \ref{sec:back} we review the theory on Delone set and the dynamical system approach that will be needed in the sequel. In Section \ref{sec:boxes}, we review the definition of tower system and prove the main result, by supposing the existence of a tower system satisfying some extra conditions (see Theorem \ref{thm:towersLR}). In Section \ref{proof:gambaudo} we prove the existence of this tower system. Then, the construction of the Markov chain and the proof of Lagarias-Pleasant's theorem are given, respectively, in  Section \ref{sec:markov} and  Section \ref{secLP}.
\section{Background}
\label{sec:back}
In this section we fix some notation and review some basic definitions about Delone sets and the associated hulls. For details we refer to \cite{LP,Robi}. For some of the notation, we also follow \cite{LeSt}. 

We work with the $d$-dimensional Euclidean space, denoted by $\RR^d$. The set of non-negative integer numbers will be denoted by $\NN$, and the set of positive integer numbers by $\NN^*$. If $\P$ is a subset of $\RR^d$ and $v$ is in $\RR^d$, then the set $\{ P - v : P\in \P\}$ will be denoted $\P-v$. A pair $(\Lambda,Q)$, where $Q$ is a bounded subset of $\RR^d$ and $\Lambda\subseteq Q$ is finite is called a \emph{pattern}. If $Q=\adh{B}_S(x_0)$ is the closed ball of radius $S>0$ around $x_0\in \Lambda$, then $(\Lambda,Q)$ is called a \emph{$S$-pattern}, \emph{centered} at $x_0$. The set $Q$ is called the \emph{support} of the pattern. For $t\in\RR^d$ and $(\Lambda,Q)$, we set $(\Lambda,Q)-t = (\Lambda-t,Q-t)$. Two patterns $(\Lambda_1,Q_1)$ and $(\Lambda_2,Q_2)$ are \emph{equivalent} if there exists $t\in\RR^d$ such that $(\Lambda_1,Q_1)-t = (\Lambda_2,Q_2)$. We refer to the equivalence class of a pattern $(\Lambda,Q)$
as a pattern-class.
 
\subsection{Delone sets.}

Let $X$ be a subset of $\RR^d$. We say that $X$ is $r$-discrete if every closed ball of radius $r$ in $\RR^d$ intersects $X$ in at most one point. We say that $X$ is $R$-dense if every closed ball of radius $R$ in $\RR^d$ intersects $X$ in at least one point. The set $X$  is a \emph{Delone set} if it is  $r$-discrete and $R$-dense for some $r,R>0$.

Let $X$ be a Delone set. A \emph{patch} in $X$ is a pattern of the form $X\wedge Q:= (X\cap Q, Q)$. The \defi{$S$-patch} of $X$ \emph{centered} at $x\in X$ is defined as $X\wedge \adh{B}_{S}(x)$. 
Two patches in $X$ are \emph{equivalent} if they are equivalent as patterns. The \emph{set of pattern-classes} of $X$ is the set containing the pattern-classes of all patches in $X$. 

For a $S$-pattern $\pp = (\Lambda,\adh{B}_{S}(x_0))$, an \emph{occurrence} of $\pp$ in $X$ is a point $y\in\RR^d$ such that $X\wedge \adh{B}_S(y)$ is equivalent to $\pp$. A Delone set $X$ is \emph{repetitive} if for every $S>0$ there is $M>0$ such that every ball of radius $M$ contains an occurrence of every $S$-patch of $X$. The smallest such $M$ is denoted by $M_X(S)$. If there exists $L>1$ such that $M_X(S)\leq LS$ for all $S>0$, then $X$ is called \emph{linearly repetitive}.  
The set $X$ has \emph{finite local complexity} if the number of equivalence classes of $S$-patches is finite. Clearly, every repetitive Delone set has finite local complexity but not the converse. A Delone set $X$ is called \emph{aperiodic} if $X - v \neq  X$ for all $v \neq 0$.

The collection of all Delone sets with finite local complexity is denoted by $\D$. Given two Delone sets $X$ and $Y$ in $\D$, their distance is defined as the smallest $0<\varepsilon<\sqrt{2}/2$ for which there exist $u,u'\in B_{\varepsilon}(0)$ such that 
\begin{equation*}
(X-u)\cap \adh{B}_{1/\varepsilon}(0) = (Y-u')\cap \adh{B}_{1/\varepsilon}(0);
\end{equation*}
if such $\varepsilon$ does not exist, then the distance between $X$ and $Y$ is defined to be $\sqrt{2}/2$. It is standard that this gives  a distance (see e.g. \cite{LMS}) under which two Delone sets are close whenever they coincide in a big ball around $0$ up to a small translation. We  refer to the topology
induced by this metric as the \emph{tiling} topology (for a discussion of different topologies for Delone sets, see \cite{Moody}).

Given a Delone set $X$ in $\D$, the hull of $X$, denoted by $\Omega$, is defined as the closure w.r.t. the tiling topology of the family 
$\{X - v : v\in\RR^d\}$. It is well-known that $\Omega$ is compact, and the set of pattern-classes of $X$ contains the pattern-classes of all Delone sets in $\Omega$. It is important to observe that this implies that if $X$ is $r$-dense and $R$-discrete for some fixed $r$ and $R>0$, then all the Delone sets in $\Omega$ are also $r$-dense and $R$-discrete (for the same choices of $r$ and $R$).
 
The \emph{translation action} $\Gamma$ over $\Omega$ is the $\RR^d$-action  defined by 
\[\Gamma_{v} Y = Y - v\]
for all $v\in\RR^d$ and $Y\in \Omega$. The pair $(\Omega,\Gamma)$ forms a dynamical system and we refer to it as a  \emph{Delone system}. Recall that $(\Omega,\Gamma)$ is said to be \emph{minimal} if every orbit is dense. It is well-known that $\Omega$ is minimal iff $X$ is repetitive, and in this case, $\Omega$ is the set containing all the Delone sets that have the same pattern-classes as $X$. Moreover, if $X$ is repetitive and aperiodic, then 
all Delone sets in $\Omega$ are also aperiodic.

\subsection{Local transversals and return vectors.}
Let $(\Omega,\Gamma)$ be an aperiodic  minimal Delone system. The \emph{canonical transversal} of $\Omega$ is the set composed of  all 
Delone sets in $\Omega$ that contain $0$. This terminology is motivated by the fact that if $Y$ is in $\Omega^0$, then every small translation of $Y$
will not be in $\Omega^0$. A \emph{cylinder} in $\Omega$ is a set of the form 
\begin{equation*}
C_{Y,S}:=\{Z\in\Omega\mid Z\wedge \adh{B}_S(0) = Y\wedge \adh{B}_S(0)\},
\end{equation*}
where $Y\in\Omega$ and $S>0$ are such that $Y\cap \adh{B}_S(0)\neq\emptyset$. The following proposition is well-known (see e.g. \cite{KP}). 

\begin{prop}
\label{prop:cantor}
Every cylinder in $\Omega$ is a Cantor set. Moreover, a basis for the topology of $\Omega$ is given by sets of the form 
\[\{ Z - v \mid  Z\in C_{Y,S}, v\in B_{\varepsilon}(0)\}.\]
In particular, the canonical transversal $\Omega^0$ is a Cantor set.
\end{prop}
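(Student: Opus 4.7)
My plan is to verify the three defining features of a Cantor set—compact, totally disconnected, perfect—directly for each cylinder $C_{Y,S}$, then deduce the basis statement from the definition of the tiling metric, and conclude by identifying $\Omega^0$ with a specific cylinder.

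First I would prove that $C_{Y,S}$ is closed in $\Omega$, hence compact. If $Z_n \to Z$ with $Z_n \in C_{Y,S}$, the metric definition gives, for each small $\varepsilon' > 0$, translations $u_n, u'_n \in B_{\varepsilon'}(0)$ with $(Z_n - u_n) \cap \adh{B}_{1/\varepsilon'}(0) = (Z - u'_n) \cap \adh{B}_{1/\varepsilon'}(0)$. Using the observation recorded just before the proposition, every element of $\Omega$ is $r$-discrete and $R$-dense for common constants. Once $\varepsilon' < r/4$ a small translation cannot move one point of a patch onto another, so the combinatorial patch on $\adh{B}_S(0)$ is preserved up to an overall shift by $u_n - u'_n$; letting $\varepsilon' \to 0$ forces $Z \cap \adh{B}_S(0) = Y \cap \adh{B}_S(0)$.

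For total disconnectedness I would appeal to finite local complexity: for any $S' > S$ the set of equivalence classes of patches $Z \wedge \adh{B}_{S'}(0)$ with $Z \in C_{Y,S}$ is finite, and fixing such a patch (without shift) defines a clopen subset of $C_{Y,S}$; these subsets partition $C_{Y,S}$ into finitely many pieces of diameter at most $1/S'$, so letting $S' \to \infty$ yields a clopen basis at every point. For perfectness I would exploit repetitivity together with aperiodicity: given $Z \in C_{Y,S}$ and $n \geq S$, the patch $Z \wedge \adh{B}_n(0)$ has infinitely many occurrences in $Z$, so one may pick $y_n \neq 0$ with $Z - y_n$ agreeing with $Z$ on $\adh{B}_n(0)$. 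The Delone set $Z - y_n$ lies in $\Omega$ and in $C_{Y,S}$, is distinct from $Z$ by aperiodicity, and tends to $Z$ in the tiling topology since the matching radius grows.

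For the basis claim, given $X' \in \Omega$ and $\varepsilon' > 0$, I would choose $S > \max(R, 1/\varepsilon')$ and $0 < \varepsilon < \varepsilon'$. Then $X' \cap \adh{B}_S(0) \neq \emptyset$ by $R$-density, so $C_{X',S}$ is defined and $\{Z - v : Z \in C_{X',S},\ v \in B_{\varepsilon}(0)\}$ is an open neighborhood of $X' = X' - 0$ contained in the tiling $\varepsilon'$-ball around $X'$ (take $u = -v$ and $u' = 0$ in the metric definition). Finally, for $S$ strictly smaller than the common discreteness radius $r$, any $Y \in \Omega^0$ satisfies $Y \cap \adh{B}_S(0) = \{0\}$, and the same holds for every element of $\Omega^0$, so $\Omega^0 = C_{Y,S}$ is a Cantor set. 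The principal obstacle is the closedness step, where the two independent small translations appearing in the tiling metric must be reconciled via the uniform $r$-discreteness on $\Omega$; the rest of the argument is a direct translation between the combinatorial side (finite local complexity, repetitivity, aperiodicity) and the metric side of the tiling topology.
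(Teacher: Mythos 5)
The paper offers no proof of this proposition; it merely cites Kellendonk and Putnam, so your argument has to be judged on its own merits. The overall plan is sound, and the perfectness, total-disconnectedness, basis, and $\Omega^0$ pieces are essentially correct (with the small caveat that, for $C_{X',S}[B_{\varepsilon}(0)]$ to be open, one should also take $\varepsilon < r(C_{X',S})$, as the paper does when defining boxes).

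The genuine gap is in the step you yourself flag as the principal obstacle, namely the closedness of $C_{Y,S}$, which you attempt to settle using only the common $r$-discreteness. That is not enough. If $Z_n \to Z$ with $Z_n \in C_{Y,S}$, one gets shifts $w_n \to 0$ for which $Z \cap (\adh{B}_S(0) + w_n) = (Y + w_n) \cap (\adh{B}_S(0) + w_n)$ holds for large $n$; $r$-discreteness then does show that every point of $Y \cap \adh{B}_S(0)$ belongs to $Z$. But nothing in this argument rules out that $Z$ has an \emph{extra} point $q$ with $|q| = S$ exactly: the corresponding points $q - w_n$ of $Z_n$ can sit just outside $\adh{B}_S(0)$ for every $n$ (for instance, if $w_n$ is orthogonal to $q$, then $|q - w_n| = \sqrt{S^2 + |w_n|^2} > S$), so the hypothesis $Z_n \in C_{Y,S}$ imposes no constraint on them even as $w_n \to 0$. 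The missing ingredient is finite local complexity applied to the $2S$-patch at the origin: since every $Z_n$ contains the fixed nonempty finite set $Y \cap \adh{B}_S(0)$, the translational ambiguity of the $2S$-patch of $Z_n$ at the origin is pinned down to finitely many possibilities, so after passing to a subsequence $Z_n \cap \adh{B}_{2S}(0)$ is literally a constant finite subset $P$ of $\RR^d$; then $Z \cap \adh{B}_S(0) = P \cap \adh{B}_S(0) = Y \cap \adh{B}_S(0)$, which eliminates extra boundary points. You already invoke FLC for total disconnectedness, so the tool is available; it simply needs to be applied in the closedness step as well, rather than $r$-discreteness alone.
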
 

A \emph{local transversal} in $\Omega$ is a clopen (both closed and open) subset of any cylinder in $\Omega$. By Proposition \ref{prop:cantor}, a local transversal $C$ is a Cantor set. This implies that 
\begin{equation*}
\operatorname{rec}(C) := \inf\{S>0 \mid C_{Y,S}\subseteq C \text{ for all } Y\in C\}
\end{equation*}
is finite, and the collection 
\begin{equation*}
\{ C_{Y,S} \mid Y\in C, S > \operatorname{rec}(C)\}
\end{equation*}
forms a basis for its topology. Indeed, since $C$ is a Cantor set, it is easy to find a finite set $\{Y_1,\ldots,Y_m\}$ in $C$ such that 
\begin{equation*}
 C  = \bigcup_{i=1}^m C_{Y_i,\operatorname{rec}(C)}.
\end{equation*}
The motivation to define $\operatorname{rec}(C)$ is the following: suppose that we are given a Delone set $Y\in \Omega$ and we want to check if $Y$ belongs to $C$. Then it suffices to look whether the patch $Y\wedge \adh{B}_{\operatorname{rec}(Y)}(0)$ is equivalent to $Y_i\wedge \adh{B}_{\operatorname{rec}(Y)}(0)$ for some $Y_i$. Of course, if $C = C_{Y,S}$, then its recognition radius is smaller than $S$.

Given a  local transversal $C$ and $D\subseteq\RR^d$,  the following notation will be used throughout the paper:
\begin{equation*}
C[D] = \{ Y - x \mid Y\in C, x\in D\}.
\end{equation*}
A very successful way of studying the hull is provided by the set of return vectors to a local transversal. Given a local transversal $C$  and a Delone set $Y\in\Omega$, we define 
\begin{equation*}
\R_C(Y) = \{x\in \RR^d \mid  Y -x \in C\}.
\end{equation*}
When $Y$ belongs to $C$, we refer to $\R_C(Y)$ as the \defi{set of return vectors} of $Y$ to $C$. 
The following lemma is standard (see e.g.\cite{Coronel1})
\begin{lemma}
Let $C$ be a local transversal. Then for each $Y \in C$, the set of return vectors $\R_C(Y)$  is a repetitive Delone set. Moreover, 
the following quantities
\begin{align}
\label{rX}
r(C) = &\frac{1}{2}\inf\{\norm{x-y}\mid x,y \in \R_C(Y),\, x\neq y\},\quad \text{ and} \\
\label{RX}
R(C) =&\inf\{R>0 \mid \R_C(Y)\cap \adh{B}_R(y)\neq\emptyset \text{ for all $y\in\RR^d$} \},
\end{align}
do not depend on the choice of $Y$ in $C$.
\end{lemma}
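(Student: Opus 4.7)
The strategy is to verify first that $\R_C(Y)$ is a Delone set and is repetitive for each $Y\in C$, and then to show that $r(C)$ and $R(C)$ are intrinsic to $C$ by tracing the dependence of these quantities on local patterns of $Y$ and invoking minimality of $(\Omega,\Gamma)$.

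The starting observation is that $C$ is contained in the canonical transversal, so every $Z\in C$ contains $0$. Hence if $Y\in C$ and $x\in\R_C(Y)$ then $0\in Y-x$, i.e.\ $x\in Y$, so $\R_C(Y)\subseteq Y$. In particular $\R_C(Y)$ inherits $r$-discreteness from $Y$ (with the uniform discreteness constant of the hull). Moreover, by the decomposition recorded in the text, $C=\bigcup_{i=1}^m C_{Y_i,\operatorname{rec}(C)}$, so $Y-x\in C$ if and only if the patch $Y\wedge\adh{B}_{\operatorname{rec}(C)}(x)$, translated to be centered at $0$, is equivalent to one of the finitely many patterns $Y_i\wedge\adh{B}_{\operatorname{rec}(C)}(0)$. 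In other words, $\R_C(Y)$ records the occurrences in $Y$ of a finite set $\mathcal{P}(C)$ of pattern-classes of radius $\operatorname{rec}(C)$, a description that is intrinsic to $C$.

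For $R$-density and repetitivity I would invoke the repetitivity of $Y$ (which follows from the hypothesis that $X$ is repetitive, since all Delone sets in $\Omega$ share the same pattern-classes). Each of the finitely many patterns in $\mathcal{P}(C)$ occurs in $Y$ with uniformly bounded gap, so every ball of large enough radius meets $\R_C(Y)$, giving $R$-density. For repetitivity, observe that a patch of $\R_C(Y)$ of radius $S$ about a point $x_0\in\R_C(Y)$ is determined by the patch of $Y$ of radius $S+\operatorname{rec}(C)$ about $x_0$; by repetitivity of $Y$ this larger patch reappears in $Y$ with bounded gaps, hence so does the corresponding patch in $\R_C(Y)$.

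Finally, for independence of $r(C)$ and $R(C)$ from $Y\in C$, I would use that any two $Y,Y'\in\Omega$ realize exactly the same pattern-classes. If $x,y\in\R_C(Y)$ are distinct with $\norm{x-y}=d$, then this configuration is witnessed by a finite patch of $Y$ of radius roughly $d+\operatorname{rec}(C)$; by minimality, the same patch occurs somewhere in $Y'$, producing a translated pair of return vectors at distance $d$ in $\R_C(Y')$. Thus the infima defining $r(C)$ coincide for $Y$ and $Y'$. An entirely analogous argument, using that the presence or absence of a return vector in a ball is also coded by a finite patch of $Y$, shows that $R(C)$ does not depend on $Y$. The only mildly delicate point is to ensure that $\operatorname{rec}(C)$ is sharp enough to detect membership in $C$ from a single patch of that radius, which is precisely what the definition of $\operatorname{rec}(C)$ guarantees.
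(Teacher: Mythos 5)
The paper itself does not prove this lemma; it calls the result ``standard'' and cites Coronel's paper, so there is no internal proof to compare against. On its own merits your argument is correct and is exactly the standard one: code membership in $C$ by finitely many $\operatorname{rec}(C)$-patterns (using $C=\bigcup_i C_{Y_i,\operatorname{rec}(C)}$), observe that $\R_C(Y)$ is then the locus of occurrences in $Y$ of that finite pattern family, deduce the Delone and repetitivity properties from those of $Y$, and get $Y$-independence of $r(C)$ and $R(C)$ from minimality (every $Y'\in\Omega$ realizes the same pattern-classes as $Y$, so any bounded configuration of return vectors, and any ``empty ball'' of return vectors, is reproduced). The inequalities in both directions for $r(C)$ and $R(C)$ follow by symmetry, as you indicate.

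One small point worth tightening: you open by asserting that every local transversal $C$ is contained in the canonical transversal $\Omega^0$, hence $\R_C(Y)\subseteq Y$. The paper's definition only requires $C$ to be a clopen subset of some cylinder $C_{Z,S}$ with $Z\cap\adh{B}_S(0)\neq\emptyset$, which does not force $0\in Z$. This is harmless: pick any $y_0\in Z\cap\adh{B}_S(0)$; then $y_0$ belongs to every element of $C_{Z,S}$, so $\R_C(Y)+y_0\subseteq Y$ and $\R_C(Y)$ is a translate of a subset of $Y$, which still inherits uniform discreteness. In the body of the paper all the transversals that actually occur do lie in $\Omega^0$, so in practice your simplification is justified, but for the lemma as stated the translate suffices and should be mentioned. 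With that caveat addressed, the proof is complete.
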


\begin{remark}
If $X$ is a repetitive Delone set and $\Omega$ is its hull, then it is direct that $R(C_{Y,S})\leq M_{X}(S)$  for every $Y\in\Omega^0$ and $S>0$. Hence, in the linearly repetitive case we have $R(C_{Y,S})\leq LS$, where $L>1$ is the constant of linear repetitivity. Moreover, an estimation of $r(C)$ in terms of $L$ (known as a repulsion property) also exists (see \cite{Lenz}). For reference, these estimations are given below. 
\end{remark}

\begin{lemma}
Let $X$ be a linearly repetitive Delone set with constant $L>1$. Then, for every cylinder $C_{Y,S}$ with $Y\in\Omega^0$ and $S>0$ we have
\begin{equation}\label{eq:retbounds}
\frac{S}{2(L+1)}\le r(C_{Y,S})< R(C_{Y,S}) \le LS.
\end{equation}
\end{lemma}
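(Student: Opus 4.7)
My plan is to handle the three inequalities separately, with the lower bound being the substance of the argument. First, the upper bound $R(C_{Y,S})\le LS$ is immediate from the preceding remark: every $Y\in\Omega$ inherits the pattern-classes (and hence the repetitivity function) of $X$, so $M_Y(S)=M_X(S)\le LS$, and every ball of radius $LS$ in $\RR^d$ contains an occurrence of the $S$-patch of $Y$ at $0$. Such an occurrence is, by definition, a return vector to $C_{Y,S}$, and so $R(C_{Y,S})\le M_X(S)\le LS$.

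For the lower bound $r(C_{Y,S})\ge S/(2(L+1))$ it suffices to show that any two distinct return vectors $v_1,v_2\in\R_C(Y)$ are at distance at least $S/(L+1)$. Using that $r(C)$ does not depend on the choice of $Y\in C$, I would replace $Y$ by $Y-v_1\in C$ and relabel so the claim becomes: if $v\ne 0$ and $Y-v\in C_{Y,S}$, then $\|v\|\ge S/(L+1)$. The strategy is to argue by contradiction. Assuming $\|v\|<S/(L+1)$, the hypothesis rewrites as
\[
Y\cap \adh{B}_S(v)=\bigl(Y\cap\adh{B}_S(0)\bigr)+v,
\]
so $v$ acts as a local translation symmetry of the $S$-patch of $Y$ at $0$; in particular $(Y\cap\adh{B}_S(0))+v\subset Y$. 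Iterating this relation within $\adh{B}_S(0)$ yields an approximate period $v$ on a neighborhood of the origin, and one then propagates this symmetry across $\RR^d$ by placing occurrences of the $S$-patch, which by linear repetitivity are $LS$-dense. The constraint $(L+1)\|v\|<S$ is exactly what ensures that consecutive occurrences in such a chain overlap sufficiently for the local symmetries to be compatible and paste together into a global identity $Y-v=Y$, contradicting the aperiodicity of $Y$. This forces $\|v\|\ge S/(L+1)$, hence $r(C_{Y,S})\ge S/(2(L+1))$.

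The main obstacle is precisely this propagation step. While the hypothesis directly provides the $v$-symmetry at the origin, the analogous symmetry at $2v,3v,\ldots$ or at distant occurrences of the $S$-patch is not automatic: transferring it and overlapping copies requires careful bookkeeping of how much of each $S$-ball is determined by adjacent ones, and the constant $(L+1)$ rather than $L$ arises in order to accommodate one extra copy of $\|v\|$ in these overlaps. This is the content of the repulsion property attributed to Lenz in the remark. Finally, the strict middle inequality $r(C_{Y,S})<R(C_{Y,S})$ follows from $\R_C(Y)$ being a Delone set which, under aperiodicity of $Y$, cannot degenerate into a regular lattice whose packing radius equals its covering radius.
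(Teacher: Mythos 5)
The paper does not actually prove this lemma: the remark immediately preceding it observes that $R(C_{Y,S})\le M_X(S)\le LS$ is direct, and points to Lenz \cite{Lenz} for the repulsion bound on $r(C_{Y,S})$, then records both estimates ``for reference.'' Your proposal mirrors that division of labour. Your upper-bound argument is exactly the intended one: $Y\in\Omega^0$ has the same pattern-classes as $X$, so occurrences of the $S$-patch of $Y$ at $0$ are $M_X(S)\le LS$-dense, and such occurrences are precisely the return vectors to $C_{Y,S}$. For the lower bound you reduce correctly (via translation-invariance of $r(C)$) to the repulsion statement that $\|v\|\ge S/(L+1)$ whenever $0\neq v$ and $Y-v\in C_{Y,S}$, and your contradiction sketch -- a too-short $v$ gives a local period that linear repetitivity propagates to a global period $Y-v=Y$, contradicting aperiodicity -- is the standard Lenz-type argument; you are explicit that the quantitative bookkeeping is deferred to \cite{Lenz}, which is what the paper does as well. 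Note that aperiodicity of $X$ (hence of every $Y\in\Omega$) is used here even though it is not listed in the lemma's hypotheses; the paper assumes it throughout.

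One place where your argument is a bit off is the strict middle inequality $r(C_{Y,S})<R(C_{Y,S})$. For $d\ge 2$ this holds for \emph{every} Delone set, with no appeal to aperiodicity or to ``not being a lattice,'' simply because open balls of a fixed radius cannot be pairwise disjoint around the points of a Delone set and simultaneously have their closures cover $\RR^d$ (a volume comparison). For $d=1$, equality $r=R$ would force $\R_C(Y)$ to be an arithmetic progression, and ruling that out requires showing that a lattice of return vectors is incompatible with aperiodicity of $X$, which does not immediately follow from ``$\R_C(Y)$ is not a lattice'' -- you would need to verify that a lattice of return vectors actually produces a global period. This point is peripheral (the paper does not address it either, and only the outer bounds are used later), but as stated your justification for the strict inequality does not quite close.
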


\subsection{Solenoids, boxes and transverse measures.}
\label{sec:solenoids}
In this section, we recall some definitions and results of \cite{BBG,BG} that will be used throughout the paper.
Let $(\Omega,\Gamma)$ be an aperiodic minimal Delone system. The hull $\Omega$ is locally homeomorphic to the product of a Cantor set and $\RR^d$
(see \cite{AP,SW}). Moreover, there exists an open cover $\{U_i\}_{i=1}^n$ of $\Omega$ such that for each $i\in\{1,\ldots,n\}$, there are $Y_i\in \Omega$, $S_i>0$ and open sets $D_i\subseteq \RR^d$ such that $U_i = C_{Y_i,S_i}[D_i]$ and the map $h_i:D_i\times C_i \rightarrow U_i$ defined by $h_i(t,Z) = Z - t$ is a homeomorphism. Furthermore, there are vectors $v_{i,j}\in\RR^d$ (depending \emph{only} on $i$ and $j$) such that the transition maps $h_i^{-1}\circ h_j$ satisfy  
\begin{equation}
\label{transitionmaps}
 h_i^{-1} \circ  h_j(t,Z) = (t - v_{i,j}, Z - v_{i,j})
\end{equation}  
at all points $(t,Z)$ where the composition is defined. Following \cite{BG}, we call such a cover a \emph{$\RR^d$-solenoid's atlas}. 
It induces, among others structures, a laminated structure as follows. First, \emph{slices} are defined as sets of the form $h_i(D_i \times \{Z\})$. Equation \eqref{transitionmaps} implies that  slices are mapped onto slices. Thus, the  \emph{leaves} of $\Omega$ are defined as the smallest connected subsets that contain all the slices they intersect. 
It is not difficult to check, using \eqref{transitionmaps}, that the leaves coincide with the orbits of $\Omega$.

A \defi{box} in $\Omega$ is a set of the form $B :=C[D]$ where $C$ is a local transversal in $\Omega$, and $D\subseteq\RR^d$ is an open set such that the map from $D\times C$ to $B$ given by $(x,Y)\mapsto Y - x$ is a homeomorphism. This is true, for instance, if $D\subseteq B_{r(C)}(0)$ (\emph{cf.} \eqref{rX}). 

A Borel measure $\mu$ on $\Omega$ is \emph{translation invariant} if $\mu(B - v) = \mu(B)$ for every Borel set $B$ and $v\in\RR^d$. Let $C$ be a local transversal and $0<r<r(C)$. Each translation invariant measure $\mu$ induces a measure $\nu$ on $C$ (see \cite{Ghys} for the general construction and e.g. \cite{CFS} for the analog construction for flows): given a Borel subset $V$ of $C$, its \emph{transverse measure} is defined by
\begin{equation*}\nu(V) = \frac{\mu ( V[B_{r}(0)])}{\vol{B_{r}(0)}}.
\end{equation*}
This gives a measure on each $C$, which does not depend on $r$. The collection of all measures defined in this way is called the \emph{transverse invariant measure} induced by $\mu$. It is invariant in the sense that if $V$ 
is a Borel subset of $C$ and $x\in\RR^d$ is such that $V-x$ is a Borel subset of another local transversal $C'$, then $\nu(V-x) = \nu(V)$. Conversely, 
the measure $\mu$ of any box $B$ written as $C[D]$ may be computed by the equation 
\begin{equation*}
 \mu(C[D])=\leb{D}\times \nu(C).
\end{equation*}

\section{Tower systems}
\label{sec:boxes}
Let $\Omega$ be the hull of an aperiodic repetitive Delone set $X$. In this section we review the concepts
of box decompositions and tower systems introduced in \cite{BBG,BG} and prove our main result.  

\subsection{Box decompositions and derived tilings.}

A \defi{box decomposition} is a finite and pairwise-disjoint collection of boxes $\B = \{B_1,\ldots,B_{t}\}$ in $\Omega$ such that the closures of the boxes in $\B$ cover the hull. 
For simplicity, we always write $B_i=C_i[D_i]$, where $C_i$ and  $D_i$ are fixed and $C_i$ is contained in $B_i$. In particular, the set $D_i$ contains $0$. We  refer to $C_i$ as the \emph{base} of $B_i$. In this way, we call the union of all $C_i$ the \defi{base} of $\B$. The reasoning for fixing a local transversal in each $B_i$  comes from the fact that box decompositions can be constructed in a  canonical way starting from the set $\R_C(Y)$ of return vectors to a given local transversal $C$ (see details in Section \ref{proof:gambaudo}).

An alternative way of understanding a box decomposition is given by a family of tilings, known as \emph{derived tilings}, 
which are constructed by intersecting the box decomposition with the orbit of each Delone set in the hull. First, we 
recall basic definitions about tilings.
A \emph{tile} $T$ in $\RR^d$ is a compact set that is the closure of its interior (not necessarily connected). A \emph{tiling} $\T$ of $\RR^d$ is a countable collection of 
tiles that cover $\RR^d$ and have pairwise disjoint interiors. Tiles can be \emph{decorated}: they may have a color and/or be punctured at an interior point. Formally, this means that decorated tiles are tuples $(T,i,x)$, where $T$ is a tile,  $i$ lies in a finite set of \emph{colors}, and $x$ belongs to the interior of $T$. Two tiles have the same type if they differ by a translation. If the tiles are punctured, then the translation must also send one puncture to the other, and when they are colored, they must have the same color. 

To construct a derived tiling, the idea is to read the intersection of the boxes in the box decomposition with the orbit of a fixed Delone set in the hull. In the sequel will be convenient to make the following construction. Let $\{C_i\}_{i=1}^{t}$ be a collection of local transversals and $\{D_i\}_{i=1}^{t}$ be a collection of bounded open subsets of $\RR^d$ containing $0$. Define $\B = \{C_i[D_i]\}_{i=1}^{t}$ and observe that the sets in $\B$ are not necessarily boxes of $\Omega$. For each $Y\in\Omega$, define the (decorated) \emph{derived  collection} of $\B$ at $Y$ by 
 \begin{equation*}
\T_\B(Y) := \{ (\adh{D_i}+v,i,v) \mid  i\in\{1,\ldots,t\}, v\in \R_{C_i}(Y)\}.
\end{equation*}
The following lemma gives the relation between box decomposition and tilings.
\begin{lemma}
\label{lem:derivedtiling}
Let $\B = \{C_i[D_i]\}_{i=1}^{t}$, where the $C_i$'s are local transversals and the $D_i$'s are open bounded subsets of $\RR^d$ that contain $0$. Then, $\B$ is a box decomposition if and only if $\T_\B(Y)$ is a tiling of $\RR^d$ for every $Y\in\Omega$. In this case,  we call $\T_\B(Y)$ the \emph{derived tiling} of $\B$ at $Y$.
\end{lemma}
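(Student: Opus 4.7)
The plan is to set up an elementary correspondence between the orbit of a fixed $Y \in \Omega$ under $\Gamma$ and the tiles of $\T_\B(Y)$, and then translate the defining properties across it. Unwinding the definition of $\R_{C_i}(Y)$, one checks directly that, for every $Y \in \Omega$ and every $i$,
\[
\{v \in \RR^d \mid Y - v \in C_i[D_i]\} = \bigcup_{w \in \R_{C_i}(Y)} (D_i + w).
\]
Since $C_i$ is clopen in the compact hull $\Omega$, hence compact, and $\overline{D_i}$ is compact, the continuous map $(x, Z) \mapsto Z - x$ restricted to $\overline{D_i} \times C_i$ has compact image equal to $\overline{C_i[D_i]}$. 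Combining this with the fact that $\R_{C_i}(Y)$ is a Delone set (hence locally finite) one also obtains
\[
\{v \in \RR^d \mid Y - v \in \overline{C_i[D_i]}\} = \bigcup_{w \in \R_{C_i}(Y)} (\overline{D_i} + w).
\]

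For the forward implication, assume $\B$ is a box decomposition. The covering property $\bigcup_i \overline{B_i} = \Omega$ translates, through the above identity, into the covering of $\RR^d$ by the closed tiles $\overline{D_i} + w$. For disjointness of tile interiors, suppose $v$ lies in both $D_i + w$ and $D_j + w'$: then $Y - v$ lies in $B_i \cap B_j$, so $i = j$ by the pairwise disjointness of the boxes, and the injectivity of the homeomorphism $D_i \times C_i \to B_i$ forces $w = w'$. Under the standard regularity hypothesis $D_i = \operatorname{int}(\overline{D_i})$ (implicit in this framework), this gives disjointness of the topological interiors of the tiles, completing the tiling check.

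For the reverse implication, assume that $\T_\B(Y)$ is a tiling of $\RR^d$ for every $Y \in \Omega$. The nontrivial step in verifying that $C_i[D_i]$ is a box is injectivity of $(x, Z) \mapsto Z - x$ on $D_i \times C_i$: if $Z - x = Z' - x'$ with $Z, Z' \in C_i$ and $x, x' \in D_i$, then setting $Y = Z$ gives $x - x' \in \R_{C_i}(Y)$, so both $\overline{D_i}$ and $\overline{D_i} + (x - x')$ are tiles of $\T_\B(Y)$ and both contain $x$ in their interior; the tiling property then forces $x = x'$ and hence $Z = Z'$. That this continuous injection is a homeomorphism onto $C_i[D_i]$ follows from the local product structure of the hull described in Section \ref{sec:solenoids}. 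Pairwise disjointness of the $B_i$'s is obtained by the same interior-overlap argument applied across different indices, and the closures $\overline{B_i}$ cover $\Omega$ because, by tile coverage of $\RR^d$, they contain the orbit of any $Y \in \Omega$, which is dense by minimality.

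The argument is essentially bookkeeping through the correspondence set up above; the only genuine technicality is the mild regularity assumption that identifies $\operatorname{int}(\overline{D_i})$ with $D_i$, which is the standard implicit hypothesis in this setting and is not a real obstacle.
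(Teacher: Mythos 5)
Your proof is correct and follows essentially the same route as the paper: in the converse direction the tiling property forces injectivity of $(x,Z)\mapsto Z-x$ and pairwise disjointness of the boxes, and the tile covering of $\RR^d$ gives the covering of $\Omega$ by box closures. The only small deviations are that you invoke minimality for the covering step, whereas the paper argues more directly that for any $Y\in\Omega$ some tile of $\T_\B(Y)$ contains the origin, hence $Y\in\overline{C_i[D_i]}$; and you explicitly flag the regularity hypothesis $D_i=\operatorname{int}(\overline{D_i})$, which the paper leaves implicit (it holds automatically for the Voronoi-type boxes used in its constructions).
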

\begin{proof}
It is easy to see that if $\B$ is a box decomposition, then $\T_\B(Y)$ is a tiling for every $Y\in\Omega$. 
We now show the converse. For convenience, set $C=\cup_i C_i$. Fix  $Y\in \Omega$ and suppose there are $i,j\in\{1,\ldots,t\}$, $Y_1\in C_i,Y_2\in C_j$, $x_1\in D_i$ and $x_2\in D_j$ such that $Y = Y_1 - x_1 = Y_2 - x_2$. This implies that the tiles $\adh{D_i}-x_1$ and $\adh{D_j}-x_2$ of $\T_\B(Y)$ meet an interior point. Since $\T_\B(Y)$ is a tiling, these tile must coincide, and hence $i = j$ and $x_1 = x_2$. We conclude that maps $h_i:C_i\times D_i\rightarrow C_i[D_i]$ given by $(Y,t)\mapsto Y-t$ are one-to-one, and moreover $\B$ is pairwise disjoint. 

It rests us to prove that the map $h_i$ are homeomorphisms, and that the closures of the sets in $\B$ cover $\Omega$. Fix $i\in\{1,\ldots,t\}$. For, the map $h_i$ is the restriction of the translation action to $C_i\times D_i$, and therefore it is continuous. The continuity of the inverse  of $h_i$ follows from an standard argument involving the compactness of $C_i$ and the boundedness of $D_i$. Finally, given any Delone set $Y$ in $\Omega$, there is a tile $(\adh{D_i} + x , i, x)$ in $\T_\B(Y)$ that contains the origin, which clearly means that $Y$ belongs to the closure of $C_i[D_i]$. 
\end{proof}

\label{sec:towers}  

\subsection{Properly nested box decompositions.}  A box decomposition $\B'=\{C'_i[D'_i]\}_{i=1}^{t'}$ is \emph{zoomed out} of another box decomposition $\B = \{C_j[D_j]\}_{j=1}^{t}$ if the following properties are satisfied:
\begin{enumerate}[(Z.1)]
\item If $Y \in C'_i$ is such that $Y - x \in C_j - y$ for some $x\in \adh{D'_i}$ and $y\in \adh{D_j}$, then $C'_i - x \subseteq C_j - y$.
\item If $x \in \partial D'_i$, then there exist $j$ and $y\in\partial D_j$ such that $C'_i - x \subseteq C_j - y$.
\item For every box $B'$ in $\B'$, there is a box $B$ in $\B$ such that $B\cap B'\neq\emptyset$ and $\partial B\cap \partial B' = \emptyset$.
\end{enumerate}
For each $i\in\{1,\ldots,t'\}$ and $j\in\{1,\ldots,t\}$ define 
\begin{equation}
\label{def:Oij}
O_{i,j}=\{x\in D'_{i} \mid C'_{i}-x \subseteq C_{j}\}.
\end{equation}
\begin{enumerate}[(Z.4)]
\item For each $i\in\{1,\ldots,t'\}$ and $j\in\{1,\ldots,t\}$,
\begin{equation*}
 \adh{D'_{i}} = \bigcup_{j=1}^{t} \bigcup_{x\in O_{i,j}} \adh{D_{j}} + x,
\end{equation*}
where all the sets in the right-hand side of the equation have pairwise disjoint interiors. 
\end{enumerate}
Observe that in the case that $D_j$ is connected, then properties (Z.1) and (Z.2) imply (Z.4).

Since we are considering the $C'_i$'s and $C_j$'s as the bases of the boxes,  we ask the following additional property to be satisfied:
\begin{enumerate}[(Z.5)]
\item The base of $\B'$ is included in the base of $\B$, that is, $\cup_i C'_i \subseteq \cup_j C_j$.
\end{enumerate}

By (Z.4), we have that the tiling $\T_{\B'}(Y)$ is a super-tiling of $\T_\B(Y)$ in the sense that each tile $T$ in $\T_{\B'}(Y)$ can be decomposed into a finite set of tiles of $\T_\B(Y)$. By (Z.3), one of these tiles is  included in the interior of $T$.
\begin{lemma}
\label{lem:zoomed} 
For every $j\in\{1,\ldots,t\}$ we have
\begin{equation*}
C_{j} = \bigcup_{i=1}^{t'} \bigcup_{x\in O_{i,j}} C'_{i}-x.
\end{equation*}
\end{lemma}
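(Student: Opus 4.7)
The inclusion $\supseteq$ is immediate from the definition of $O_{i,j}$, which gives $C'_i - x \subseteq C_j$ for every $x \in O_{i,j}$. My strategy for the reverse inclusion is to fix $Y \in C_j$, locate the tile of $\T_\B(Y)$ punctured at the origin as a sub-tile of some tile of the coarser derived tiling $\T_{\B'}(Y)$, and then read off the translation vector as the desired element of $O_{i,j}$.

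Concretely, from $Y \in C_j$ I get $0 \in \R_{C_j}(Y)$, so $(\adh{D_j}, j, 0)$ is a tile of $\T_\B(Y)$ by Lemma \ref{lem:derivedtiling}. For any tile $(\adh{D'_i} + v, i, v)$ of $\T_{\B'}(Y)$, applying (Z.4) and translating by $v$ decomposes it as
\[
\adh{D'_i} + v \;=\; \bigcup_{j'=1}^{t} \bigcup_{z \in O_{i,j'}} \bigl(\adh{D_{j'}} + z + v\bigr),
\]
with pairwise disjoint interiors. Each sub-piece is in fact a tile of $\T_\B(Y)$: from $Y - v \in C'_i$ and the inclusion $C'_i - z \subseteq C_{j'}$ built into $O_{i,j'}$, one obtains $Y - (z+v) = (Y-v) - z \in C_{j'}$, hence $z + v \in \R_{C_{j'}}(Y)$. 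Since these sub-pieces exhaust $\RR^d$ (ranging over all tiles of $\T_{\B'}(Y)$) with pairwise disjoint interiors and are all tiles of $\T_\B(Y)$, they must constitute the full tile set of $\T_\B(Y)$; this is the super-tiling observation recorded right after (Z.4).

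Applying this to the distinguished tile $(\adh{D_j}, j, 0)$ yields $i$, $v \in \R_{C'_i}(Y)$, $j'$, and $z \in O_{i,j'}$ with $(\adh{D_j}, j, 0) = (\adh{D_{j'}} + z + v, j', z+v)$; matching types and punctures forces $j' = j$ and $x := -v = z \in O_{i,j}$, and then $Y + x = Y - v \in C'_i$ gives $Y \in C'_i - x$, as required. The only step that deserves care is the super-tiling identification just used (every tile of the finer tiling is captured by (Z.4) applied to a unique parent tile in the coarser one), but this is pure set-theoretic bookkeeping from the disjoint-interior clause of (Z.4) together with the fact that both derived families tile $\RR^d$; I do not anticipate any genuine obstacle beyond that.
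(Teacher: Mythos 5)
Your proof is correct, but it takes a genuinely different route from the paper's. The paper argues directly in the hull: since $\B'$ is a box decomposition, the closures of its boxes cover $\Omega$, so $Y\in C_j$ lies in $\adh{B'_i}$ for some $i$; using (Z.2), the possibility that $Y$ lies only on the \emph{boundary} of such a box is ruled out (it would put $Y$ on the boundary of a box of $\B$, impossible since $Y\in C_j$ is interior to $B_j$ and the $B_{j'}$ are pairwise disjoint and open); once $Y\in C'_i-x$ with $x\in D'_i$, (Z.1) immediately gives $C'_i-x\subseteq C_j$, i.e.\ $x\in O_{i,j}$. By contrast, you work in the derived tiling $\T_\B(Y)$ and use (Z.4): you locate the tile $(\adh{D_j},j,0)$ as a sub-tile of some tile of $\T_{\B'}(Y)$ and read off the translation vector. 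Both proofs are valid and of comparable length. The paper's is cleaner in that it avoids the tiling bookkeeping: your key step — that the (Z.4)-sub-tiles of all tiles of $\T_{\B'}(Y)$ constitute exactly the tile set of $\T_\B(Y)$ — is true, but its careful justification (choosing a generic interior point of $\adh{D_j}$ avoiding boundaries of both tilings, then using that two tiles of a tiling sharing an interior point coincide, and that the decoration is determined by the support via the injectivity of the maps $h_i$ from Lemma \ref{lem:derivedtiling}) is precisely the sort of boundary case the paper's (Z.2)-based contradiction was designed to absorb. Your approach has the mild advantage of not invoking (Z.1) or (Z.2) at all, relying only on (Z.4) and the tiling structure; the paper's has the advantage of being more direct and not needing the ``sub-tiles exhaust $\T_\B(Y)$'' lemma, which is slightly stronger than the super-tiling observation recorded after (Z.4).
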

\begin{proof}By the definition of $O_{i,j}$ and (Z.1), it suffices us to show that every $Y\in C_j$ belongs to the interior of some box $C'_i[D'_i]$. Suppose not, then $Y \in C'_i - x$ with $x\in \partial D'_i$ for some $i$ since since $\B'$ is a box decomposition. Moreover, by (Z.2) we deduce that $Y$ must be in the boundary 
of some box $B_{j'}$ in $\B$, which gives a contradiction.
\end{proof}
\subsection{Tower systems.}
A \defi{tower system} is a sequence of box decompositions $\mathfrak{T}=(\B_n)_{n\in\NN}$ such that $\B_{n+1}$ is zoomed out of $\B_n$ for all $n\in\NN$.   The following theorem was proved in  \cite{BBG}.
\begin{theorem}
Every aperiodic minimal Delone system possesses a tower system.
\end{theorem}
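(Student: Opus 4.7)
The plan is to construct the tower system $(\B_n)_{n\in\NN}$ inductively, starting from Voronoi-type box decompositions associated with a nested sequence of cylinders $C^0\supseteq C^1\supseteq\cdots$ in $\Omega^0$ whose recognition radii tend to infinity. At each stage the base of $\B_n$ will be a finite clopen partition of $C^n$, and each spatial tile $D^n_i$ will be built out of the return vectors $\R_{C^n}(Y)$.

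For the base case, I fix $Y_0\in\Omega^0$ and $S_0>0$ and set $C^0=C_{Y_0,S_0}$. For $Y\in C^0$, the Voronoi cell of the origin in the Voronoi tessellation of $\R_{C^0}(Y)$ is determined by the patch of $Y$ on a ball of radius $2R(C^0)$. By finite local complexity, only finitely many cell shapes $\adh{D^0_1},\ldots,\adh{D^0_{t_0}}$ occur, and this induces a clopen partition $C^0=\bigsqcup_i C^0_i$ where $C^0_i$ consists of those $Y$ whose Voronoi cell at $0$ equals $\adh{D^0_i}$. Setting $\B_0=\{C^0_i[D^0_i]\}_i$, Lemma~\ref{lem:derivedtiling} confirms $\B_0$ is a box decomposition, since its derived collection at $Y$ is exactly the decorated Voronoi tiling of $\R_{C^0}(Y)$.

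For the inductive step, given $\B_n=\{C^n_i[D^n_i]\}_{i=1}^{t_n}$ with base $C^n=\bigcup_i C^n_i$, I choose $S_{n+1}$ much larger than $\max_i\diam{D^n_i}+R(C^n)$ and a cylinder $C^{n+1}$ of radius $S_{n+1}$ contained in some $C^n_{i_0}$. Since $\R_{C^{n+1}}(Y)\subseteq\R_{C^n}(Y)$ for $Y\in C^{n+1}$, I cluster each $w\in\R_{C^n}(Y)$ to the nearest $v\in\R_{C^{n+1}}(Y)$, using a translation-invariant tie-breaking rule based on the finite patch around $w$. Let $K_v(Y)$ denote the cluster of $v$. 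The new tile at $v$ is $\bigcup_{w\in K_v(Y)}(\adh{D^n_{j(w)}}+w)$, where $j(w)$ is the index with $Y-w\in C^n_{j(w)}$. This set depends only on a finite patch of $Y$ around $v$, so finite local complexity yields finitely many new shapes $\adh{D^{n+1}_1},\ldots,\adh{D^{n+1}_{t_{n+1}}}$. Partitioning $C^{n+1}$ by the combinatorial type of the cluster at the origin produces sub-cylinders $C^{n+1}_i$, and I set $\B_{n+1}=\{C^{n+1}_i[D^{n+1}_i]\}_i$.

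It remains to check (Z.1)--(Z.5). Conditions (Z.4) and (Z.5) are built into the construction, since each new tile is a union of translated old tiles and $\bigcup_i C^{n+1}_i=C^{n+1}\subseteq C^n_{i_0}$. Condition (Z.1) follows from the cylinder characterization of local transversals: whether $Y-x\in C^n_j-y$ is determined by a finite patch of $Y$ and is therefore constant on the clopen set $C^{n+1}_i$. Condition (Z.2) holds because any boundary point of a cluster tile lies on the shared face of two adjacent old tiles assigned to different clusters, and such a face is a boundary point of an old tile. Condition (Z.3) follows from the choice of $S_{n+1}$: every cluster contains at least one ``interior'' old return vector, strictly closer to its assigned new vector than to any other, and the old tile at such a vector lies in the interior of the new cluster tile. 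The main obstacle, in my view, is ensuring that the tie-breaking in the clustering step is simultaneously translation-equivariant and produces clean boundary alignments between old and new tiles; the resolution is to code the tie-breaking purely in terms of the finite patch around each old return vector, which both preserves finite local complexity for the clusters and aligns new boundary facets with the skeleton of the old Voronoi tiling.
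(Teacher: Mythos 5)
Your outline follows essentially the same route as the paper's construction in Section \ref{proof:gambaudo} (Proposition \ref{lem:VB} gives the base Voronoi box decomposition; Lemma \ref{lem:zooming} gives the zooming step), which in turn adapts \cite{BBG}. Two points deserve sharpening. First, the boundary tie-breaking --- which you correctly identify as the main obstacle --- is handled in the paper by the map $D\mapsto D^*$ of \eqref{estrella}: pushing each point an infinitesimal amount in the direction $(\varepsilon,\varepsilon^2,\ldots,\varepsilon^d)$ (which, being transverse to every hyperplane, lands in the interior of exactly one Voronoi cell for all small $\varepsilon>0$) turns any polyhedral tiling into a translation-equivariant partition of $\RR^d$. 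This assigns each old puncture unambiguously to a single coarse cell with no need for a patch-based rule, and automatically aligns new facets with faces of the old tiling, which is exactly what you need for (Z.2) and (Z.4). Second, the condition that drives (Z.3) and makes the deformation valid is that the repulsion radius satisfies $r(C^{n+1})\ge 2R_\text{ext}(\B_n)$ (the hypothesis of Lemma \ref{lem:zooming}), not merely that $S_{n+1}$ exceeds $\max_i \diam{D^n_i}+R(C^n)$; the two are linked only because aperiodicity together with finite local complexity force $r(C_{Y,S})\to\infty$ as $S\to\infty$, a fact your outline uses implicitly and should state. With those two details tightened, your proof is correct and coincides with the paper's approach.
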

 
Consider a decreasing sequence $\mathfrak{C}=(C_{n})_{n\in\NN}$ of local transversals with diameter going to $0$, and a tower system $\mathfrak{T}$. We will suppose that $\mathfrak{T}$ is \emph{adapted} to $\mathfrak{C}$, i.e., that for all $n\in\NN$ we have $\B_n=\{C_{n,i}[D_{n,i}]\}_{i=1}^{t_n}$ such that $C_{n} = \cup_i C_{n,i}$ and $t_n$ is a positive integer. For each $n\in\NN^*$ we define, as in \eqref{def:Oij}, 
\begin{equation}
\label{On}
O_{i,j}^{(n)}=\{x\in D_{n,i}\mid C_{n,i}-x \subseteq C_{n-1,j}\}
\end{equation}
and 
\begin{equation*}
m_{i,j}^{(n)}=\sharp O_{i,j}^{(n)}
\end{equation*}
for every $i\in\{1,\ldots,t_n\}$ and $j\in\{1,\ldots,t_{n-1}\}$. The \emph{transition matrix} (associated to $\mathfrak{T}$)   of level $n$ is  defined as the matrix $M_n=(m_{i,j}^{(n)})_{i,j}$, i.e., $M_n$ has size $t_n\times t_{n-1}$.

Suppose that $\mu$ is a translation invariant probability measure and $\nu$ is the induced transverse measure (\emph{cf.} Section \ref{sec:solenoids}). From (Z.4),  Lemma \ref{lem:zoomed} and the definition 
 of transverse invariant measures, we  get 
\begin{equation}
\label{eq:vol}
\leb{D_{n,i}} = \sum_{j=1}^{t_{n-1}} m_{i,j}^{(n)} \leb{D_{n-1,j}}
\end{equation}
and 
\begin{equation}
\label{eq:tran}
\nu(C_{n-1,j})=\sum_{i=1}^{t_n}\nu(C_{n,i})m_{i,j}^{(n)}. 
\end{equation}
Fix $n\in\NN$.   From the relation $\mu(C_{n,i}[D_{n,i}]) = \vol{D_{n,i}}\nu(C_{n,i})$ and 
the fact that $\B_n$ is a box decomposition, it follows that
\begin{equation}\label{eq:meas}
\sum_{j=1}^{t_n}\leb{D_{n,j}}\nu(C_{n,j})=1 .
\end{equation}

Given a box decomposition $\B=\{C_{i}[D_{i}]\}_{i=1}^{t}$ , define   
its external and internal radius by 
\begin{align*}
R_\text{ext}(\B) &= \max_{i\in\{1,\ldots,t\}} \inf\{R>0 : B_R(0)\supseteq D_{i}\} ;\\
r_\text{int}(\B) &= \min_{i\in\{1,\ldots,t\}}\sup\{r>0 : B_r(0)\subseteq D_{i}\},
\end{align*}
respectively. Define also $\operatorname{rec}(\B) = \max_{i\in\{1,\ldots,t\}}\operatorname{rec}(C_{i})$.

\begin{thm}
\label{thm:towersLR}
Let $X$ be an aperiodic linearly repetitive Delone set with constant $L>1$ and $0\in X$. Given $K\geq 6L(L+1)^2$ and $s_0>0$, set $s_n = K^ns_0$ for all $n\in\NN$ and let $C_n := C_{X,s_n}$ for all $n\in\NN$. Then, there exists a tower system $\mathfrak{T}$ of $\Omega$ adapted to $(C_n)_{n\in\NN}$ 
that satisfies the following additional properties:
\begin{enumerate}[(i)]
\item for every $n\ge 0$,  $C_{n+1}\subseteq C_{n,1}$;
\item there exist constants 
\[K_1:=\frac{1}{2(L+1)} - \frac{L}{K-1}\quad \text{and}\quad K_2:=\frac{LK}{K-1},\]
which satisfy
$0<K_1<1<K_2$, such that for every $n\in\NN$ we have
\begin{equation}
\label{eqK1K2}
 K_1s_n\le r_\text{int}(\B_n) < R_\text{ext}(\B_n)\le K_2s_n;
\end{equation}
\item for every $n\in\NN$,
\begin{equation}
\label{eqRrec}
\operatorname{rec}(\B_n)\leq (2L+1) s_n.
\end{equation} 
\end{enumerate}
\end{thm}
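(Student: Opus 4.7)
The plan is to build the tower system inductively, level by level, using a scale-controlled variant of the BBG Voronoi construction attached to the cylinders $C_n=C_{X,s_n}$. At each level $n$, I would start from the ideal Voronoi cells of the return set $\R_{C_n}(X)$ around the origin and then perturb them so that their boundaries align with the cells of $\B_{n-1}$ already built. The large ratio $K \ge 6L(L+1)^2$ is what keeps the perturbation cost under control and lets the joint induction on (i)--(iii) close.

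For the base case $n=0$, take $\B_0$ to be the Voronoi box decomposition of $\R_{C_0}(X)$, with one piece $C_{0,i}$ per combinatorial type of Voronoi cell at $0$; the collection is finite by finite local complexity. The bounds in \eqref{eq:retbounds} immediately give $s_0/(2(L+1))\le r_\text{int}(\B_0)<R_\text{ext}(\B_0)\le Ls_0$, and the patch needed to identify the Voronoi type is of radius $2R(C_0)+s_0\le(2L+1)s_0$, producing (ii) and (iii) at level $0$.

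For the inductive step, assume $\B_{n-1}$ has been built. Form the ideal Voronoi cells of $\R_{C_n}(X)$ and perform a wholesale reassignment: whenever a tentative level-$n$ boundary would cut through a level-$(n-1)$ cell, push the boundary so that the entire $(n-1)$-cell ends up on one side. Since each level-$(n-1)$ cell has external radius at most $R_\text{ext}(\B_{n-1})$, each boundary moves by at most $R_\text{ext}(\B_{n-1})$. This produces the recurrences
\[R_\text{ext}(\B_n)\le Ls_n+R_\text{ext}(\B_{n-1}),\qquad r_\text{int}(\B_n)\ge \frac{s_n}{2(L+1)}-R_\text{ext}(\B_{n-1}),\]
which telescope to the stated $R_\text{ext}(\B_n)\le K_2 s_n$ and $r_\text{int}(\B_n)\ge K_1 s_n$. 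The zooming conditions (Z.1), (Z.2), (Z.4) follow from the clean wholesale reassignment, and (Z.3) follows from the strict inequality $r_\text{int}(\B_n)>R_\text{ext}(\B_{n-1})$, which the choice of $K$ guarantees.

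Property (i) is essentially free: $C_{n+1}$ is clopen in $C_n$ and $s_{n+1}=Ks_n\ge(2L+1)s_n\ge\operatorname{rec}(\B_n)$, so all Delone sets in $C_{n+1}$ must lie in a single piece of the partition $\{C_{n,i}\}$, which we label $C_{n,1}$; this also yields (Z.5). For (iii), recognizing $C_{n,i}$ splits into identifying the return vectors to $C_n$ inside $B_{2R(C_n)}(0)$ and resolving the finitely many level-$(n-1)$ adjustments near the level-$n$ boundary, and the size of $K$ makes both costs bounded by $(2L+1)s_n$. The main obstacle will be making the wholesale boundary reassignment rigorous: one has to verify that it produces open shapes with well-behaved boundaries, that the combinatorial types at level $n$ remain finite (using finite local complexity of $X$ together with the bounded geometry of $\B_{n-1}$), and that the reassignment is performed consistently across the whole hull so that $\B_n$ is a genuine box decomposition rather than merely a pointwise prescription.
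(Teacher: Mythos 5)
Your proposal is correct and follows essentially the same route as the paper: Voronoi box decompositions based at the cylinders $C_n$, inductive zooming with the recursions $R_\text{ext}(\B_n)\le L s_n + R_\text{ext}(\B_{n-1})$ and $r_\text{int}(\B_n)\ge s_n/(2(L+1)) - R_\text{ext}(\B_{n-1})$, and the size condition on $K$ to close the induction and guarantee (Z.1)--(Z.5). The step you flag as the main obstacle (a consistent ``wholesale reassignment'' of boundaries) is resolved in the paper, following \cite{GMPS}, by the explicit tie-breaking operation $D^*=\{p : p+(\varepsilon,\varepsilon^2,\ldots,\varepsilon^d)\in D \text{ for all sufficiently small } \varepsilon>0\}$, which turns the Voronoi tiles into a genuine partition of $\RR^d$ and makes each deformed cell a locally determined union of level-$(n-1)$ tiles, exactly as required.
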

The proof is deferred to Section \ref{proof:gambaudo}.
\subsection{Tower systems with uniformly bounded transition matrices.}
The following lemma allows to estimate the coefficients of the transition matrices.
\begin{lemma}
\label{prop:matrices}
Let $X$ be an aperiodic repetitive Delone set and $\mathfrak{C}=(C_n)_{n\in\NN}$ be a decreasing sequence of clopen subsets of 
$\Omega^0$ with $\diam{C_n}\rightarrow 0$ as $n\rightarrow+\infty$. Suppose that $\mathfrak{T}$ is a tower system
adapted to $\mathfrak{C}$. Then the following assertions hold:
\begin{enumerate}[(i)]
\item If  $M_X(\operatorname{rec}(\B_n))\le r_\text{int}(\B_{n+1})$ for every $n\in\NN$, then the coefficients of the transition matrices $M_n$ are strictly positive.
\item  If
$A:=\sup_{n\in\NN} ({R_\text{ext}(\B_{n+1})}/{r_\text{int}(\B_n)})$ is finite, then $\left\|M_n\right\|_\infty\leq A^d$ for all $n\in\NN^*$, and in particular, the set $\{M_n\}_{n\in\NN}$ is  finite. Here $\norm{M_n}_\infty$ denotes the maximum absolute row sum of $M_n$.
\end{enumerate}
\end{lemma}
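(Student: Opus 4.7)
The plan is to handle the two assertions by exploiting the refinement properties (Z.4) and (Z.1): repetitivity drives part (i), and a volume comparison drives part (ii).

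For part (i), I fix $n\in\NN^*$ and any pair $(i,j)$ with $i\le t_n$ and $j\le t_{n-1}$, and exhibit at least one $x\in O_{i,j}^{(n)}$. Pick any $Y\in C_{n,i}$ and any reference $Y_0\in C_{n-1,j}$. Since $X$ is repetitive and $Y$ shares its patch-classes, the $\operatorname{rec}(C_{n-1,j})$-patch of $Y_0$ around the origin must reoccur in $Y$ at some point $x$ with $\|x\|\le M_X(\operatorname{rec}(C_{n-1,j}))\le M_X(\operatorname{rec}(\B_{n-1}))$, giving $Y-x\in C_{n-1,j}$. The hypothesis places $x\in\adh{B}_{r_\text{int}(\B_n)}(0)\subseteq\adh{D_{n,i}}$, and (Z.1), applied with $y=0\in\adh{D_{n-1,j}}$, upgrades the single membership $Y-x\in C_{n-1,j}$ to the set-level inclusion $C_{n,i}-x\subseteq C_{n-1,j}$. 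Finally, the refinement (Z.4) forces the level-$(n-1)$ tile $\adh{D_{n-1,j}}+x$ in $\T_{\B_n}(Y)$ to sit inside the level-$n$ tile $\adh{D_{n,i}}$ at the origin (its interior meets $D_{n,i}$ and therefore cannot cross $\partial D_{n,i}$); thus $x\in D_{n,i}$ and $x\in O_{i,j}^{(n)}$.

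For part (ii), I take Lebesgue volumes in the partition (Z.4), which is exactly identity (\ref{eq:vol}):
\[
\vol{D_{n,i}}=\sum_{j=1}^{t_{n-1}} m_{i,j}^{(n)}\vol{D_{n-1,j}}.
\]
Using $B_{r_\text{int}(\B_{n-1})}(0)\subseteq D_{n-1,j}$ and $D_{n,i}\subseteq B_{R_\text{ext}(\B_n)}(0)$, I get $\vol{D_{n-1,j}}\ge \omega_d\,r_\text{int}(\B_{n-1})^d$ and $\vol{D_{n,i}}\le \omega_d\,R_\text{ext}(\B_n)^d$, where $\omega_d$ is the volume of the unit ball of $\RR^d$. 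Dividing yields
\[
\sum_{j=1}^{t_{n-1}} m_{i,j}^{(n)}\le \left(\frac{R_\text{ext}(\B_n)}{r_\text{int}(\B_{n-1})}\right)^{\!d}\le A^d,
\]
so $\|M_n\|_\infty\le A^d$ after maximizing over $i$. For the finiteness of $\{M_n\}$ I combine this bound with part (i): each row of $M_n$ then consists of $t_{n-1}$ strictly positive integer entries summing to at most $A^d$, which forces $t_{n-1}\le A^d$; applying the same reasoning to $M_{n+1}$ gives $t_n\le A^d$; hence the matrices range over a finite set of integer matrices of dimensions at most $A^d\times A^d$ with entries in $\{0,1,\ldots,A^d\}$.

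The only delicate point is the boundary issue in part (i): the return vector $x$ produced by repetitivity must land in the open set $D_{n,i}$ rather than on its boundary. This is precisely what the puncture observation above resolves, since (Z.4) forces the whole level-$(n-1)$ tile whose interior contains $x$ to sit inside a single level-$n$ tile, and $\adh{B}_{r_\text{int}(\B_n)}(0)\subseteq\adh{D_{n,i}}$ pins this tile down to the one at the origin.
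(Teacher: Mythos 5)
Your proposal is correct and follows essentially the same route as the paper: repetitivity combined with (Z.1) gives positivity, and the volume identity \eqref{eq:vol} together with the radii bounds gives the norm estimate. You additionally make explicit two points the paper leaves implicit, namely that the return vector lands in the open set $D_{n,i}$ rather than merely its closure (resolved via (Z.4) and the refinement of tilings), and that the finiteness of $\{M_n\}$ uses the strict positivity from (i) to bound the matrix sizes $t_n$, not just the norm bound alone.
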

\begin{proof}
Fix $n\in\NN$, $i\in\{1,\ldots,t_{n+1}\}$ and $Y\in C_{n+1,i}$. First, we prove (i). By hypothesis, all the $\operatorname{rec}(\B_n)$-patches of $Y$ occur in $D_{n+1,i}$. Since every $C_{n,j}$ is determined by a finite number of $\operatorname{rec}(C_{n,i})$-patches and these patches occur in $D_{n+1,i}$, it follows that there are vectors $v$ in $D_{n+1,i}$ such that $Y - v$ belongs to $C_{n,j}$. Hence $m_{i,j}^{(n)}>0$. 
We now prove (ii). Fix $n\in\NN^*$. Since  $D_{n,i}$ is included in a ball of radius $R_\text{ext}(\B_{n})$ and each $D_{n-1,j}$ contains a ball of radius $r_\text{int}(\B_{n-1})$, we deduce  from \eqref{eq:vol} that 
\begin{equation}\label{eq:1} 
\sum_{j=1}^{t_{n-1}} m_{i,j}^{(n)}\leq \left(\frac{R_\text{ext}(\B_{n})}{r_\text{int}(\B_{n-1})}\right)^d. 
\end{equation}
Taking maximum on $i$ in \eqref{eq:1} yields $\norm{M_n}_\infty\leq A^d$. The finiteness of $\{M_n\}_{n\in\NN^*}$ now follows from the last inequality.
\end{proof}

Finally, we state and prove our main result.
\begin{thm}
\label{thm:towersLR2}
Let $X$ be an aperiodic linearly repetitive Delone set. Then, the tower system of $\Omega$ obtained in Theorem \ref{thm:towersLR} satisfies the following: 
\begin{enumerate}[(i)]
\item for every $n\in\NN^*$, the matrix $M_n$ has strictly positive coefficients;
\item the matrices $\{M_n\}_{n\in\NN^*}$ are uniformly bounded in size and norm.
\end{enumerate}
\end{thm}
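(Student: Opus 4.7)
The plan is to reduce Theorem~\ref{thm:towersLR2} to Theorem~\ref{thm:towersLR} and Lemma~\ref{prop:matrices}, plus one explicit counting step on the derived tilings. For part~(i), I apply Lemma~\ref{prop:matrices}(i), whose hypothesis $M_X(\operatorname{rec}(\B_n))\le r_\text{int}(\B_{n+1})$ I verify as follows: linear repetitivity gives $M_X(s)\le Ls$; item~(iii) of Theorem~\ref{thm:towersLR} gives $\operatorname{rec}(\B_n)\le(2L+1)s_n$; and item~(ii) of the same theorem gives $r_\text{int}(\B_{n+1})\ge K_1 K s_n$. So it suffices to check the scalar inequality $L(2L+1)\le K_1 K$. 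Using the explicit formula for $K_1$, the assumption $K\ge 6L(L+1)^2$, and the elementary estimate $K/(K-1)\le 2$ (valid since $K\ge 2$), a direct computation yields
\begin{equation*}
K_1 K=\frac{K}{2(L+1)}-\frac{LK}{K-1}\ge 3L(L+1)-2L=3L^2+L\ge L(2L+1),
\end{equation*}
which closes the argument for (i).

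For the norm part of~(ii), I apply Lemma~\ref{prop:matrices}(ii): by item~(ii) of Theorem~\ref{thm:towersLR}, the ratios $R_\text{ext}(\B_{n+1})/r_\text{int}(\B_n)$ are bounded above by $A:=KK_2/K_1<\infty$ uniformly in $n$, so $\|M_n\|_\infty\le A^d$ for every $n\in\NN^*$.

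What I expect to be the main technical point is the uniform bound on the number of rows (and columns) of $M_n$, that is, on $t_n$, which is not addressed directly by Lemma~\ref{prop:matrices}. For this I would use the derived tiling $\T_{\B_n}(Y)$ from Lemma~\ref{lem:derivedtiling} together with linear repetitivity. The tile at a return vector $v\in\R_{C_n}(Y)$ has type $i$ iff $Y-v\in C_{n,i}$, a condition that depends only on the $\operatorname{rec}(\B_n)$-patch of $Y$ centered at $v$, hence on a $(2L+1)s_n$-patch of~$Y$. By linear repetitivity, each of the $t_n$ distinct pattern-classes involved already occurs in~$Y$ within distance $L(2L+1)s_n$ of the origin, so one representative tile of each type has its interior contained in $\adh{B}_{(L(2L+1)+K_2)s_n}(0)$. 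These $t_n$ interiors are pairwise disjoint and each contains an open ball of radius $r_\text{int}(\B_n)\ge K_1 s_n$; a volume comparison then gives
\begin{equation*}
t_n\le\left(\frac{L(2L+1)+K_2}{K_1}\right)^d,
\end{equation*}
a constant depending only on $d$ and $L$. Combined with the norm bound from the preceding paragraph, this yields the uniform bound in size and in norm claimed in~(ii).
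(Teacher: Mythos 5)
Your argument for~(i) and for the norm bound in~(ii) matches the paper's: both reduce to verifying the hypotheses of Lemma~\ref{prop:matrices} from the estimates \eqref{eqK1K2} and \eqref{eqRrec} of Theorem~\ref{thm:towersLR}, and your scalar inequality $K_1K\ge L(2L+1)$ is the same check the paper performs. Where you diverge is the size bound: you treat it as a gap in Lemma~\ref{prop:matrices} and fill it with a separate geometric argument (planting a representative tile of each type near the origin via linear repetitivity and comparing volumes). That argument is correct and self-contained, but it is not needed. Once Lemma~\ref{prop:matrices}(i) gives $m^{(n)}_{i,j}\ge 1$ for all $i,j$, the row-sum bound $\|M_n\|_\infty\le A^d$ forces $t_{n-1}\le\sum_j m^{(n)}_{i,j}\le A^d$, so the number of columns of $M_n$, and hence also the number of rows of $M_{n-1}$, is bounded by $A^d$; this is exactly what the paper means by ``the set $\{M_n\}$ is finite'' in the conclusion of Lemma~\ref{prop:matrices}(ii). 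In short: positivity plus the norm bound already controls the size, and the paper exploits this; your volume-packing argument is an independent, slightly longer route to the same constant-order bound. A small point worth flagging is that your extra argument implicitly uses the fact (from Proposition~\ref{lem:VB}) that each $C_{n,i}$ is a single cylinder $C_{Y_i,k_n}$, so that ``type $i$'' really is a single $\operatorname{rec}(\B_n)$-pattern-class and linear repetitivity applies to it directly; this is true in the construction of Theorem~\ref{thm:towersLR}, but is not automatic from the axioms of a tower system alone.
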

\begin{proof}
Take the notations of Theorem \ref{thm:towersLR} for $\mathfrak{C}$ and $\mathfrak{T}$. It suffices us to prove that $\mathfrak{T}$ satisfies the conditions  (i) and (ii) of Lemma \ref{prop:matrices}. Indeed, 
by the definition of linearly repetitivity we have $M_X(\operatorname{rec}(\B_n)) \le L \operatorname{rec}(\B_n)$
for all $n\in\NN$. Combining this with \eqref{eqRrec}, the left-hand inequality of \eqref{eqK1K2} and the definition of $s_n$ we get
\[M_X(\operatorname{rec}(\B_n)) \leq \frac{L(2L+1)}{KK_1} r_\text{int}(\B_{n+1}).\]
Since $K\geq 6L(L+1)^2$, it follows that $L(2L+1)\leq K_1K$ and the  condition (i) in Lemma \ref{prop:matrices} is satisfied. To check that the condition in (ii) is also satisfied, use \eqref{eqK1K2} twice (with $n$ and $n+1$) and then replace ${s_{n+1}}=K{s_{n}}$ into the result to obtain
\begin{equation*}
\frac{R_\text{ext}(\B_{n+1})}{r_\text{int}(\B_n)}\leq K\frac{K_2}{K_1},
\end{equation*}
from which it follows that  ${R_\text{ext}(\B_{n+1})}/{r_\text{int}(\B_n)}$ is uniformly bounded in $n\in\NN$. 
\end{proof} 
\begin{cor}
\label{cor:unique_erg}
Let $X$ be an aperiodic linearly repetitive Delone set and $\Omega$ its hull. Then the system $(\Omega,\Gamma)$ is uniquely ergodic. 
\end{cor}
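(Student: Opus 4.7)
My plan is to derive the corollary by combining Theorem \ref{thm:towersLR2} with the standard cone-contraction argument of \cite{BG,GaMar}: a Delone system that admits a tower system with strictly positive, uniformly bounded transition matrices is automatically uniquely ergodic. I would sketch this argument explicitly rather than appealing to the references.

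I would start by fixing any translation-invariant Borel probability measure $\mu$ on $\Omega$, and letting $\nu$ denote the associated transverse measure from Section \ref{sec:solenoids}. For each $n\in\NN$, I form the vector $w_n = (\nu(C_{n,i}))_{i=1}^{t_n}\in\RR^{t_n}_{>0}$. Equations \eqref{eq:tran} and \eqref{eq:meas} then give
\begin{equation*}
w_{n-1}=M_n^T w_n\qquad\text{and}\qquad \sum_{j=1}^{t_n}\leb{D_{n,j}}\, w_n(j)=1.
\end{equation*}
Since the bases $C_{n,i}$ have vanishing diameter and their translates cover $\Omega^0$ by minimality of $(\Omega,\Gamma)$, the sequence $(w_n)$, together with the transverse formula $\mu(C_{n,i}[D_{n,i}])=\leb{D_{n,i}}\nu(C_{n,i})$, determines $\mu$ uniquely.

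Next I would invoke Theorem \ref{thm:towersLR2}: the matrices $\{M_n\}_{n\in\NN^*}$ form a finite family with strictly positive integer entries. The Birkhoff--Hopf theorem then provides a uniform constant $\theta<1$ such that every $M_n^T$ contracts the Hilbert projective metric $d_H$ on the positive cone by the factor $\theta$, and such that the image of each $M_n^T$ has projective diameter at most some uniform $\Delta<\infty$. If $(w_n^{(1)})$ and $(w_n^{(2)})$ arise from two invariant probability measures $\mu_1,\mu_2$, I would observe that for any $n\ge 1$, $w_n^{(j)}=M_{n+1}^T w_{n+1}^{(j)}$ sits inside the image of $M_{n+1}^T$, so $d_H(w_n^{(1)},w_n^{(2)})\le\Delta$. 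Iterating the contraction $n$ times then yields
\begin{equation*}
d_H(w_0^{(1)},w_0^{(2)})\le \theta^n\, d_H(w_n^{(1)},w_n^{(2)})\le \theta^n\Delta\xrightarrow{n\to\infty}0,
\end{equation*}
so $w_0^{(1)}$ and $w_0^{(2)}$ are proportional. The same reasoning applied at every level forces $w_n^{(1)}\parallel w_n^{(2)}$, and the normalization condition pins down the scalar to $1$; hence $\mu_1=\mu_2$.

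The only real content of the argument is the Birkhoff--Hopf contraction, and the main thing to verify is that positivity plus uniform boundedness of the $M_n$ (both guaranteed by Theorem \ref{thm:towersLR2}) is exactly the input needed to make the product $M_1^T\cdots M_n^T$ collapse the positive cone to a single ray as $n\to\infty$. The remaining ingredients -- the transverse decomposition of $\mu$, the compatibility \eqref{eq:tran}, and the extension of $\nu$ from $\bigcup_n C_n$ to all of $\Omega^0$ by minimality -- are routine consequences of the material developed in Sections \ref{sec:back} and \ref{sec:boxes}.
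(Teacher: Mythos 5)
The paper gives no proof here: it simply delegates to \cite{BG,BBG} the standard fact that a Delone system with a tower system whose transition matrices are strictly positive and uniformly bounded is uniquely ergodic. Your proposal reconstructs exactly that argument, and its core is correct: the relation $w_{n-1}=M_n^{T}w_n$ combined with the Birkhoff--Hopf contraction of the Hilbert projective metric --- made uniform because Theorem \ref{thm:towersLR2} forces $\{M_n\}$ to be a finite family of strictly positive matrices --- shows that any two candidate sequences $(w_n^{(1)})$, $(w_n^{(2)})$ are proportional at every level, and \eqref{eq:meas} pins the scalar to $1$.

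The one place that needs sharpening is the preliminary reduction that ``$(w_n)$ determines $\mu$''. You justify it by saying that the $C_{n,i}$ have vanishing diameter and that their translates cover $\Omega^0$ by minimality. That is not quite enough: what is actually needed is that the fine boxes $C_{n,i}[Q]-v$ (with $Q\subseteq D_{n,i}$ small) become a basis for the topology of $\Omega$, and for that one must also know that the translated transversals $C_{n,i}-x$ have small diameter \emph{uniformly over} $x\in D_{n,i}$. Since translating by $x$ is not an isometry of the tiling metric, $\diam{C_{n,i}}\to 0$ alone does not give this; one also needs the agreement radius to outgrow $R_\text{ext}(\B_n)$, i.e.\ that the quantity \eqref{size} tends to infinity, which is exactly what the paper verifies (for the same purpose, when defining $n_0$) in Section \ref{secLP}, using only \eqref{eqK1K2} and \eqref{RBn1}. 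Once that estimate is invoked, your reduction is justified and the contraction argument closes the proof.
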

The proof is standard and can be found e.g. in  \cite{BG,BBG}. It is also important (for the remainder of this paper) to remark that this produces an independent proof to the original one by Lagarias and Pleasants in \cite{LP}.

\section{The Bellissard-Benedetti-Gambaudo's construction in the linearly repetitive case}
\label{proof:gambaudo}
In this section we give the proof of Theorem \ref{thm:towersLR} by adapting the construction of \cite{BBG}. First, we need to recall some 
basic facts about Voronoi tilings. Given a Delone set $Y$ and a point $y\in Y$, the \defi{Voronoi cell} $\V_y(Y)$ of $y$ in $Y$ is defined by
\begin{equation*}
\V_y(Y)=\{z\in \RR^d \mid \forall y'\in Y ,||z-y|| \le
||z-y'|| \}.
\end{equation*}
It is standard that Voronoi cells are closed convex polyhedron in $\RR^d$, and they form  the so-called \emph{Voronoi tiling}, which we denote by $\T_Y$. The next lemma gathers well-known facts about Voronoi cells that will be needed in the proof (see \cite[Proposition 5.2 and Corollary 5.2]{Senechal}).
\begin{lemma}\label{lem:Vcell}
Let $Y$ be  a Delone set that is $R$-dense for $R>0$ and $r$-discrete for $r>0$. Then, for all $y\in Y$, the Voronoi cell $\V_y(Y)$ is included in $\adh{B}_R(y)$ and contains the ball $\adh{B}_{r}(y)$.
Moreover, the cell $\V_y(Y)$ is determined by the $2R$-patch of $Y$ centered at $y$, i.e., if $Y\wedge \adh{B}_{2R}(y)$ is equivalent to $Y\wedge \adh{B}_{2R}(y')$, 
then  
\[
\V_y(Y) - y  = \V_{y'}(Y)-y'.
\]
\end{lemma}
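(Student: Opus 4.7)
The plan is to verify the three claims of the lemma in order, relying solely on the triangle inequality combined with the defining metric properties of $Y$ as a Delone set. No advanced machinery is required.

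For the inclusion $\V_y(Y)\subseteq\adh{B}_R(y)$, I would take an arbitrary $z\in \V_y(Y)$ and invoke the $R$-denseness of $Y$ to produce some $y'\in Y$ with $\|z-y'\|\le R$; the Voronoi defining inequality then forces $\|z-y\|\le\|z-y'\|\le R$. For the reverse inclusion $\adh{B}_r(y)\subseteq\V_y(Y)$, I would first extract from the paper's definition of $r$-discreteness (every closed $r$-ball meets $Y$ in at most one point) the standard consequence that distinct points of $Y$ lie at pairwise distance strictly greater than $2r$, by the midpoint argument. Given this, for any $z$ with $\|z-y\|\le r$ and any $y'\in Y\setminus\{y\}$, a single triangle inequality yields $\|z-y'\|\ge\|y-y'\|-\|z-y\|>2r-r=r\ge\|z-y\|$, so $z\in \V_y(Y)$.

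The heart of the lemma is the last assertion, that $\V_y(Y)$ is determined by the $2R$-patch centered at $y$. My plan is first to establish the key intermediate fact that points of $Y$ lying outside $\adh{B}_{2R}(y)$ contribute no effective constraint to the cell: using the already-proved inclusion $\V_y(Y)\subseteq\adh{B}_R(y)$, for any $z\in\V_y(Y)$ and any $y''\in Y$ with $\|y''-y\|>2R$ the triangle inequality gives $\|z-y''\|>2R-R=R\ge\|z-y\|$, so the bisector constraint imposed by $y''$ is automatically satisfied. Consequently $\V_y(Y)-y$ admits an expression in terms of $(Y-y)\cap\adh{B}_{2R}(0)$ alone. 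An equivalence of the $2R$-patches at $y$ and $y'$ must be realized by the translation $y-y'$ (the centers are $y$ and $y'$), so $(Y-y)\cap\adh{B}_{2R}(0)=(Y-y')\cap\adh{B}_{2R}(0)$, and the desired equality $\V_y(Y)-y=\V_{y'}(Y)-y'$ follows by comparing the two expressions.

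I do not anticipate any serious obstacle; the entire proof is an exercise in elementary Euclidean geometry. The only point deserving care is the appearance of the factor two: one might naively guess that the cell is already determined by the patch of radius $R$, but a point $z$ near the boundary of $\V_y(Y)\subseteq\adh{B}_R(y)$ can have a competing nearest neighbor $y''$ at distance up to (just under) $2R$ from $y$, which is exactly why the correct scale is $2R$ and why the triangle-inequality estimate above is sharp.
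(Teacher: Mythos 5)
Your proof is correct in substance, and you should note that the paper itself does not give a proof of this lemma: it simply cites \cite[Proposition~5.2 and Corollary~5.2]{Senechal}, so there is no argument in the paper to compare against. Your elementary triangle-inequality approach is the standard one. The first two claims are handled cleanly, including the useful preliminary observation that $r$-discreteness forces distinct points of $Y$ to lie at distance strictly greater than $2r$.

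One point in the third part deserves sharpening, because as written the logic is slightly off even though the computation is right. You verify, for $z\in\V_y(Y)$ and $y''\in Y$ with $\norm{y''-y}>2R$, that $\norm{z-y''}>R\ge\norm{z-y}$. But for $z\in\V_y(Y)$ the inequality $\norm{z-y}\le\norm{z-y''}$ holds \emph{by definition} of the Voronoi cell, so this observation alone does not show that the far points ``contribute no effective constraint.'' What is actually needed is the reverse containment: that the constraints coming from $Y\cap\adh{B}_{2R}(y)$, together with the bound $\norm{z-y}\le R$, already imply membership in $\V_y(Y)$. Concretely, the formula to establish is
\[
\V_y(Y)-y \;=\; \bigl\{\,z\in\adh{B}_R(0)\ :\ \norm{z}\le\norm{z-u}\ \text{for all}\ u\in(Y-y)\cap\adh{B}_{2R}(0)\,\bigr\},
\]
where the inclusion $\subseteq$ is immediate from the first part and the trivial weakening of constraints, and $\supseteq$ is exactly your triangle-inequality estimate run for an arbitrary $z$ with $\norm{z}\le R$ rather than for $z\in\V_y(Y)$. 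With this formula in hand, the right-hand side depends only on $(Y-y)\cap\adh{B}_{2R}(0)$ and the global constant $R$, and your final step (the translation realizing the patch equivalence must be $y-y'$, since it must carry the center of one ball to the center of the other) correctly yields $\V_y(Y)-y=\V_{y'}(Y)-y'$. Note also that you should not expect to drop the clause $z\in\adh{B}_R(0)$: it is not obvious, and not needed, that the half-space constraints from $Y\cap\adh{B}_{2R}(y)$ alone confine $z$ to $\adh{B}_R(y)$.
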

The fact that Voronoi cells are locally determined allows us to use them to construct box decompositions with any given clopen subset $C$ as base, as we show in the following result.  
\begin{prop}
\label{lem:VB}
Let $C$ be a clopen susbset of $\Omega^0$ and take $k\ge 2R(C)+\operatorname{rec}(C)$. Then, there is a box decomposition $\B(C,k)$ of $\Omega$ with $C$ as its base and  $\operatorname{rec}(\B(C,k))\leq k$, $r_\text{int}(\B(C,k))=r(C)$ and $R_\text{ext}(\B(C,k))=R(C)$.
\end{prop}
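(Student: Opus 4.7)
\emph{Plan.} The construction is to let the boxes be the ``Voronoi polyhedra'' of the set of return vectors to $C$, partitioning $C$ according to the $k$-patch at the origin. Fix any $Y\in C$; by the lemma preceding this proposition the Delone set $\R_C(Y)$ is $r(C)$-discrete and $R(C)$-dense. The key observation is that when $k\ge 2R(C)+\operatorname{rec}(C)$, the Voronoi cell $\V_0(\R_C(Y))$ depends only on the patch $Y\wedge\adh{B}_k(0)$. Indeed, by Lemma \ref{lem:Vcell} this cell is determined by the $2R(C)$-patch of $\R_C(Y)$ at the origin, and to decide whether a given point $v\in\adh{B}_{2R(C)}(0)$ lies in $\R_C(Y)$ one only needs the $\operatorname{rec}(C)$-patch of $Y$ around $v$, which is contained in $\adh{B}_k(0)$.

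\emph{Construction and box decomposition property.} By finite local complexity, only finitely many patches $\pp_1,\ldots,\pp_t$ arise as $Y\wedge\adh{B}_k(0)$ for $Y\in C$. Set
\[C_i=\{Y\in C\mid Y\wedge\adh{B}_k(0)=\pp_i\},\]
which gives a clopen partition of $C$ with $\operatorname{rec}(C_i)\le k$. By the observation above, the polyhedron $V_i:=\V_0(\R_C(Y))$ is the same for every $Y\in C_i$; put $D_i:=\interior{V_i}$ and $\B(C,k):=\{C_i[D_i]\}_{i=1}^{t}$. To verify that $\B(C,k)$ is a box decomposition I invoke Lemma \ref{lem:derivedtiling}: for arbitrary $Y\in\Omega$ each $v\in\R_C(Y)$ belongs to a unique $\R_{C_i}(Y)$, and using $\R_C(Y-v)=\R_C(Y)-v$ one computes
\[\adh{D_i}+v\;=\;V_i+v\;=\;\V_0(\R_C(Y)-v)+v\;=\;\V_v(\R_C(Y)).\]
Hence $\T_{\B(C,k)}(Y)$ is nothing but the Voronoi tiling of $\R_C(Y)$, which is a tiling of $\RR^d$. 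Lemma \ref{lem:derivedtiling} then implies that $\B(C,k)$ is a box decomposition, whose base is $\bigcup_i C_i=C$.

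\emph{Quantitative bounds and main obstacle.} The inequality $\operatorname{rec}(\B(C,k))\le k$ is built into the definition of the $C_i$. By Lemma \ref{lem:Vcell} applied to $\R_C(Y)$, every $V_i$ contains a ball of radius $r(C)$ at the origin and sits inside a ball of radius $R(C)$, which yields $r_\text{int}(\B(C,k))\ge r(C)$ and $R_\text{ext}(\B(C,k))\le R(C)$. The reverse inequalities, giving the stated equalities, follow from the fact that the infima defining $r(C)$ and $R(C)$ are attained by some Voronoi cell: a pair $x\ne y$ in $\R_C(Y)$ with $\|x-y\|=2r(C)$ translates (via $Y\mapsto Y-x\in C$) to a configuration in which $V_i$ has inscribed radius exactly $r(C)$, and a cell of circumradius $R(C)$ likewise translates into one of the $V_i$. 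The main obstacle in the argument is the identification in the previous paragraph of the derived collection with the Voronoi tiling of $\R_C(Y)$; once this is in place, everything else follows routinely from Lemmas \ref{lem:Vcell} and \ref{lem:derivedtiling}.
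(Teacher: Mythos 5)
Your proof is correct and follows essentially the same route as the paper's: partition $C$ by the $k$-patch at the origin, take $D_i$ to be the interior of the common Voronoi cell $\V_0(\R_C(Y))$, identify the derived collection with the Voronoi tiling of $\R_C(Y)$, and invoke Lemma~\ref{lem:derivedtiling} together with Lemma~\ref{lem:Vcell} for the radius bounds. You spell out the derived-tiling identification and the sharpness of the radii in a bit more detail than the paper, which simply asserts these as direct consequences, but the construction and key lemmas are identical.
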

\begin{proof}
Consider
\[\A_{k,C} = \{ Y\wedge \adh{B}_k(0) \mid  Y\in C \}.\]

By minimality, each patch in $\A_{k,C}$ occurs  in $X$ and since $X$ has only finitely many $k$-patches up to translation, it follows that $\A_{k,C}$ is finite, say $\A_{k,C}=\{\pp_1,\ldots,\pp_t\}$, where $t\in\NN$ and the $\pp_i$'s are all different. For each $i\in\{1,\ldots,t\}$, we define
\begin{equation*}
C_i = \{Y\in \Omega \mid Y\wedge \adh{B}_k(0) = \pp_i\}
\end{equation*}
and
\[B_i = \{ Y - y \mid Y\in C_i, y\in\operatorname{int}(\V_{0}(\R_C(Y)))\}.\]
Let us  show that the $B_i$'s are boxes. On the one hand, the inequality $k\geq \operatorname{rec}(C)$ implies that all the $C_{i}$'s  are included in $C$. Hence, they partition $C$ and it follows that each $C_i$ is a clopen subset of $C$. On the other hand, $\R_C(Y)$ is $R(C)$-dense for every $Y\in C$. Hence,
Lemma \ref{lem:Vcell} implies that the Voronoi cell $\V_{0}(\R_C(Y))$ is determined by the patch $\pp(Y):=\R_C(Y)\wedge \adh{B}_{2R(C)}(0)$. Since $\pp(Y)$ is determined by the patch 
$Y\wedge \adh{B}_{2R(C)+\operatorname{rec}(C)}(0)$ and  $k\geq 2R(C)+\operatorname{rec}(C)$, it follows that  $\V_{0}(\R_C(Y))$ is  the same for all $Y\in C_i$ and therefore $B_i=C_i[D_i]$, where $D_i:= \operatorname{int}(\V_{0}(\R_C(Y)))$.
To conclude we define 
\[\B(C,k)=\{B_1,\ldots, B_t\}\]
and observe that the undecorated version of $\T_{\B(C,k)}(Y)$ is the Voronoi tiling of $\R_C(Y)$, which implies by Lemma \ref{lem:derivedtiling} that $\B(C,k)$ is a box decomposition. The clopen $C$ can be chosen as base of $\B(C,k)$ by construction and the last three relations in the statement of the proposition are direct consequences of Lemma \ref{lem:Vcell}.
\end{proof}

\begin{figure}
 \begin{center}
 \includegraphics[scale=0.5]{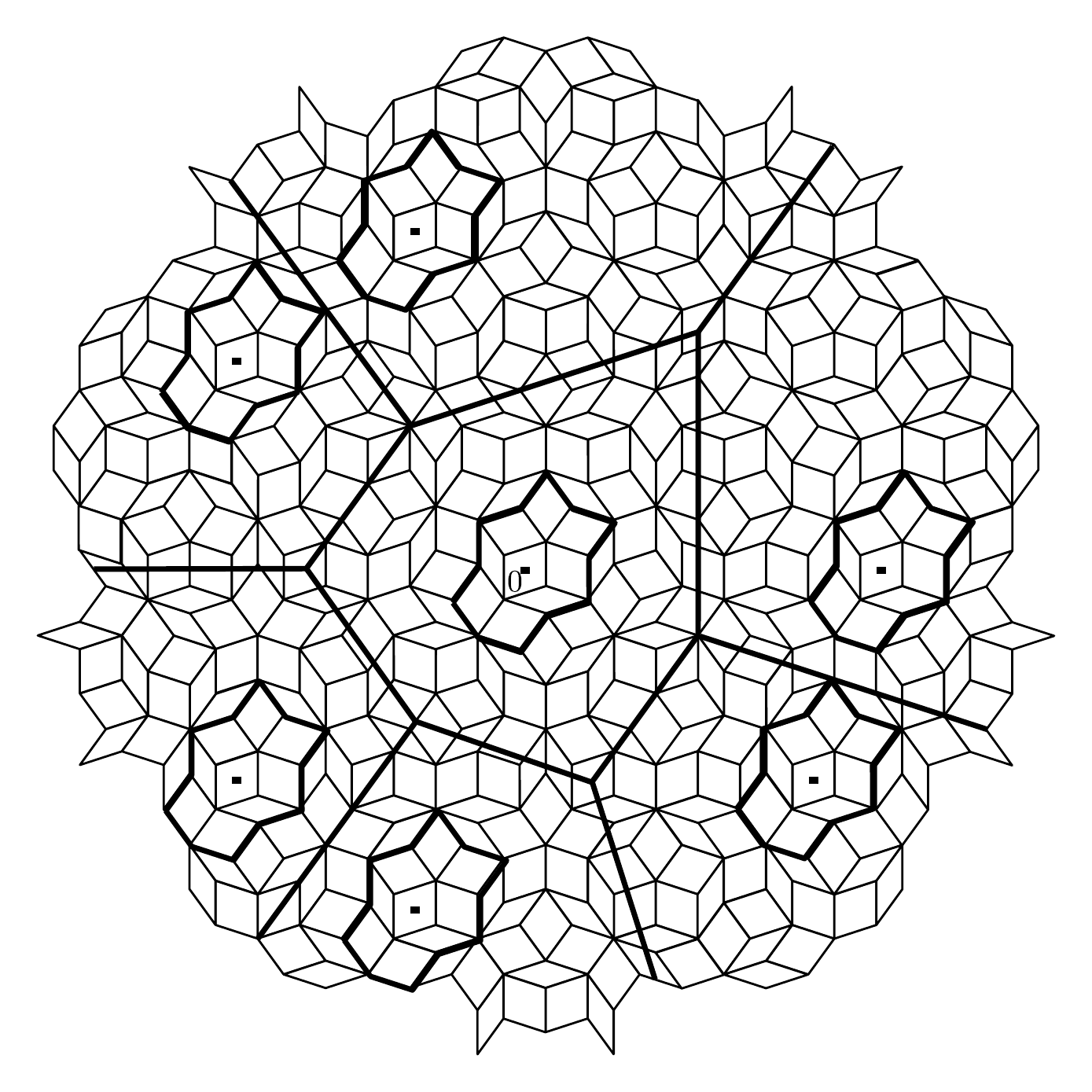}
\end{center}
\caption[Voronoi tile]{Voronoi tiling obtained from the Delone set of centers of translated copies of a patch.}\label{fig:PenVoronoi}
\end{figure}
The next step in the proof consists in, given a box decomposition $\B$ based at $C$, constructing a box decomposition zoomed out of $\B$. Before stating this precisely, we sketch the idea of the proof. Consider $\T$ the derived tiling of $\B$ at some Delone set $Y$ of $\Omega$. Let $C'$ be a clopen subset of $C$ and consider the Voronoi tiling $\V$ of $\R_{C'}(Y)$. Without considering colors, the tiling $\V$ is the derived tiling of the box decomposition $\B(C',k)$ given by Proposition \ref{lem:VB}, where $k$ is chosen large enough. In general, $\B(C',k)$  is not zoomed out of $\B$ because (see e.g. Figure \ref{fig:PenVoronoi}), the tiles of $\V$ can not decomposed into tiles of $\T$.

Then, we modify the tiles of $\V$ around their boundaries (without modifying their punctures so we do not modify the bases of the boxes) in such a way that, after the modification, the tiles can be decomposed into tiles of $\T$ (see Figure \ref{fig:PenSurgery}). The idea is to replace each tile $V$ in $\V$ by the union of the tiles in $\T$ with punctures in $V$. In the case that there is a tile of $\T$ with its puncture in the boundary of a tile of $\V$, then the new collection will not form a tiling (since this tile would belong to two different tiles of $\V$, which will, consequently, overlap). To solve this problem, a choice has to be made so each point of $\RR^d$ belongs to a unique tile of $\V$.   
\begin{figure}
\begin{center}
\includegraphics[scale=0.5]{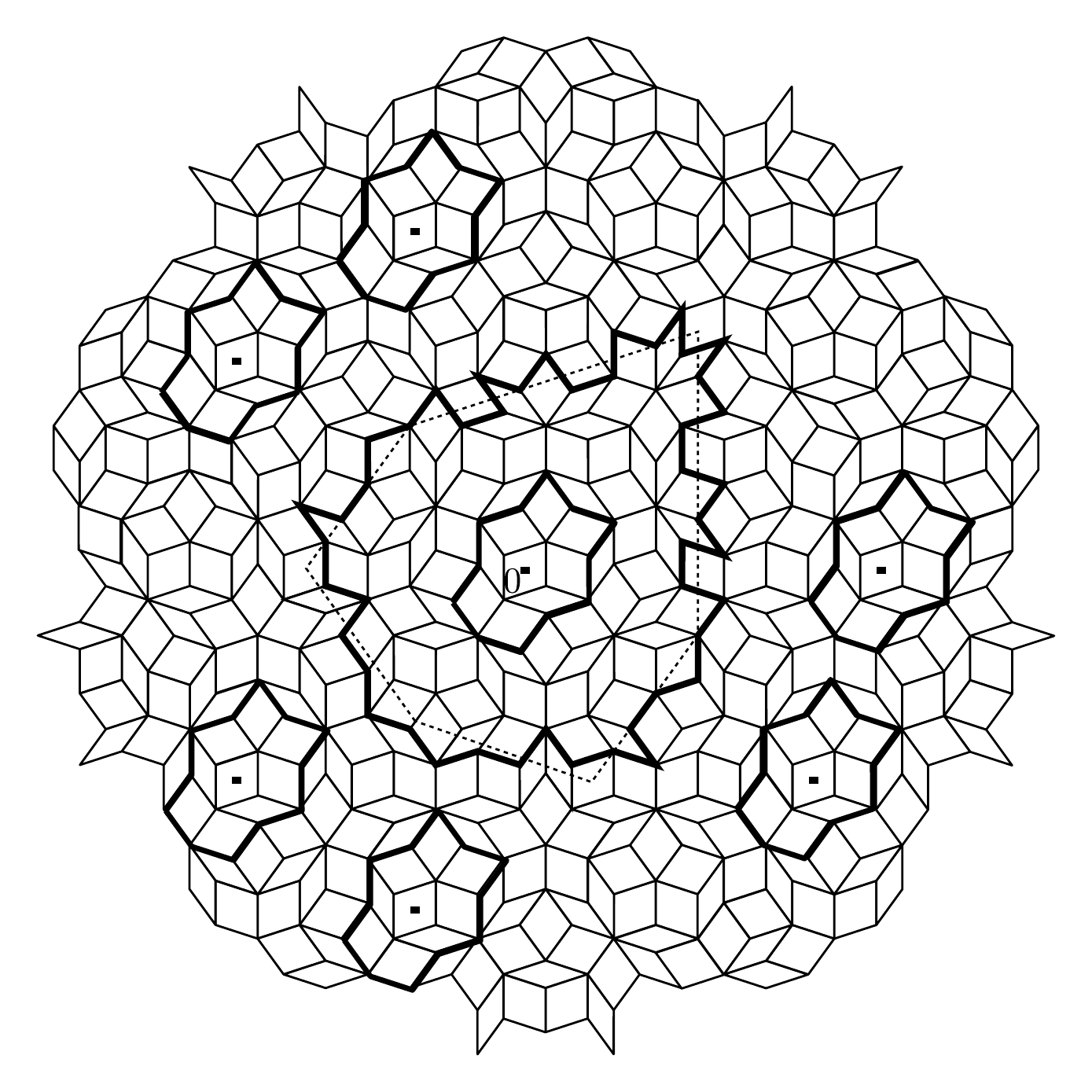}
\end{center}
\caption{Patch of the Penrose tiling constructed from the tiles intersecting the Voronoi tile.}\label{fig:PenSurgery}
\end{figure}
One way of achieving this is the following (see \cite{GMPS}). Given any subset $D$ of $\RR^d$, we define 
\begin{equation}
\label{estrella}
 D^*=\{ p \in\RR^d \mid p+(\varepsilon,\varepsilon^2,\ldots,\varepsilon^d)\in D \text{ for all sufficiently small $\varepsilon>0$}\}. 
\end{equation}
We have that $(D+v)^* = D^* + v$ for every $v\in\RR^d$, and if $\S$ is any tiling of $\RR^d$, then $\{S^*\mid S\in\S\}$ is a partition. Thus, we replace each tile $V$ of $\V$ by the tiles whose puncture belong 
to $V^*$. 
 
\begin{lemma}\label{lem:zooming}
 Let $\B$ be a box decomposition of $\Omega$ based at a clopen subset $C$ of $\Omega^0$ and $C'$ be a clopen subset of $C$
satisfying $r(C')\ge 2R_\text{ext}(\B)$. Then for each $k'\geq \max\{2R(C')+\operatorname{rec}(C'), R(C')+2R_\text{ext}(\B)+\operatorname{rec}(\B)\}$ there is a box decomposition 
$\B'$ zoomed out of $\B$ and based at $C'$ satisfying 
\begin{align}
\label{RrecBpr} \operatorname{rec}(\B')&\leq k',\\
\label{rBpr} r_\text{int}(\B')&\geq  r(C')-R_\text{ext}(\B),\\
\label{RBpr} R_\text{ext}(\B')&\leq  R(C') + R_\text{ext}(\B).
\end{align}
\end{lemma}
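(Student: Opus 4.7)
The plan is to follow the sketch accompanying Figures~\ref{fig:PenVoronoi}--\ref{fig:PenSurgery}: first build the Voronoi box decomposition based at $C'$, then perform the ``surgery'' on its tiles near their boundaries---without touching the bases $C'_i$---so that each new tile becomes an exact union of tiles of $\T_\B(Y)$.

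First I would apply Proposition~\ref{lem:VB} to the clopen set $C'$ with the parameter $k'$; the inequality $k'\ge 2R(C')+\operatorname{rec}(C')$ legitimates this step and produces a box decomposition $\widetilde{\B}=\{C'_i[V_i]\}_{i=1}^{t'}$ whose base is $C'$ and in which $V_i=\operatorname{int}\V_0(\R_{C'}(Y))$ does not depend on the choice of $Y\in C'_i$. Fixing such a $Y$ in each $C'_i$, and letting $j(x)\in\{1,\ldots,t\}$ denote the unique index with $Y-x\in C_{j(x)}$ (unique because the base $C$ is the disjoint union of the $C_j$), I then define
\[
D'_i\;:=\;\operatorname{int}\!\Bigl(\bigcup_{x\in (\overline{V_i})^{*}\cap\, \R_C(Y)}(\overline{D_{j(x)}}+x)\Bigr),
\]
with $(\cdot)^*$ as in \eqref{estrella}, and set $\B':=\{C'_i[D'_i]\}_{i=1}^{t'}$. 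The crucial preliminary is the independence of $D'_i$ from the choice of $Y\in C'_i$: the relevant punctures $x$ lie in $\overline{B}_{R(C')}(0)$, the type $j(x)$ is read off the $\operatorname{rec}(\B)$-patch at $x$, and the attached tile $\overline{D_{j(x)}}+x$ extends at most $R_\text{ext}(\B)$ further; the extra buffer in $k'\ge R(C')+2R_\text{ext}(\B)+\operatorname{rec}(\B)$ absorbs the punctures lying just outside $V_i$ whose tiles may touch $\overline{D'_i}$, so that the whole configuration defining $\overline{D'_i}$ is determined by the $k'$-patch of $Y$ at the origin, hence by $C'_i$.

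To check that $\B'$ is a box decomposition I invoke Lemma~\ref{lem:derivedtiling}: for any $Y\in\Omega$ the translates $(\overline{V_i})^*+v$, with $v\in\R_{C'}(Y)$ and $i$ ranging over the cell types, form a partition of $\RR^d$ by the defining property of the $*$-operation; and by construction $\T_{\B'}(Y)$ is obtained from $\T_\B(Y)$ by grouping each of its tiles into the unique cell of this partition containing its puncture, producing a tiling. For the zoomed-out axioms, (Z.5) is immediate, (Z.4) is built into the construction with $O_{i,j}=(\overline{V_i})^*\cap \R_{C_j}(Y)$, and (Z.1)--(Z.2) follow from (Z.4) together with a short boundary argument. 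The remaining hypothesis $r(C')\ge 2R_\text{ext}(\B)$ is used only for (Z.3): it forces $V_i\supseteq \overline{B}_{2R_\text{ext}(\B)}(0)$, so the tile of $\T_\B(Y)$ at the puncture $0\in\R_C(Y)$ sits strictly inside $\overline{D'_i}$.

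Finally, the three quantitative estimates. The bound $\operatorname{rec}(\B')\le k'$ is precisely the well-definedness argument above. For $R_\text{ext}(\B')\le R(C')+R_\text{ext}(\B)$, every relevant puncture $x$ lies in $\overline{B}_{R(C')}(0)$ and $\overline{D_{j(x)}}+x\subseteq \overline{B}_{R_\text{ext}(\B)}(x)$. For $r_\text{int}(\B')\ge r(C')-R_\text{ext}(\B)$, any $p$ with $\|p\|<r(C')-R_\text{ext}(\B)$ lies in a tile of $\T_\B(Y)$ whose puncture $x$ satisfies $\|x\|\le R_\text{ext}(\B)+\|p\|<r(C')$, so $x$ is in the interior of $V_i$ and hence in $V_i^*$, placing $p$ in $\overline{D'_i}$. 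The main obstacle is the well-definedness of the modified domains and the accompanying bookkeeping for the $*$-operation near the boundaries of the Voronoi cells, which is precisely where the buffer $2R_\text{ext}(\B)$ in the lower bound on $k'$ is needed; once this is settled, the axioms (Z.1)--(Z.5) and the three geometric estimates follow by short direct arguments.
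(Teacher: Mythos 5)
Your proof follows the paper's route exactly: start from the Voronoi box decomposition of Proposition~\ref{lem:VB} based at $C'$, perform the $*$-surgery of \eqref{estrella} so each new domain $D'_i$ becomes a union of tiles of $\T_\B$ whose punctures lie in $\overline{V_i}^*$, check that the result is well defined and gives a box decomposition via Lemma~\ref{lem:derivedtiling}, and read off \eqref{RrecBpr}--\eqref{RBpr} and the zoomed-out axioms from the geometry. One small misattribution worth noting: well-definedness of $D'_i$ only needs $k'\geq R(C')+\operatorname{rec}(\B)$; the extra $2R_\text{ext}(\B)$ in the lower bound for $k'$ is what the paper actually uses to verify axiom (Z.1), which your write-up dispatches as ``a short boundary argument.''
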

\begin{proof}
Let $\B = \{C_j[D_j]\}_{j=1}^{t}$ and $\B(C',k')=\{C'_i[E_i]\}_{i=1}^{t'}$. The proof consists in modifying the $E_i$'s  in $\B(C',k')$ to obtain a box decomposition $\B'$ that is zoomed out of $\B$. We proceed by steps and work with derived tilings.

\noindent \textbf{Step 1.} For each $Y\in \Omega$, we ``deform'' each tile of $\T_{\B(C',k')}(Y)$ into a tile that is the support of a patch of $\T_{\B}(Y)$. More precisely, for each tile $(\adh{E_i} + v,i,v)$ with $i\in\{1,\ldots,t'\}$ and $v\in \R_{C'_i}(Y)$ we define 
$D_{v,Y}'$ as
\begin{equation*}
D_{v,Y}'=\bigcup_{j=1}^{t}\bigcup_{w\in\R_{C_{j}}(Y)\cap ({E}_i^*+v)} \adh{D_j} + w,
\end{equation*}
where $E_i^*+v$ is defined by \eqref{estrella}. It is easy to see that $D_{v,Y}'$  is  the support of the tiles of $\T_\B(Y)$ whose punctures are in ${E}_i^*+v$. Since $\{E_i^*+v\mid i\in\{1,\ldots,t\}, v\in\R_{C'_i}(Y)\}$
is a partition of $\RR^d$, it follows that $\T'(Y) = \{(D_{v,Y}',i,v)\mid i\in\{1,\ldots,t\}, v\in\R_{C'_i}(Y)\}$ is a decorated tiling (the tiles are not necessarily connected), which we view as a deformation of $\T_{\B(C',k')}(Y)$.

\noindent \textbf{Step 2.} Fix $i\in\{1,\ldots,t'\}$ and denote $S = R(C')+\operatorname{rec}(\B)$. We show that $D_{w,Z}' = D_{v,Y}' - v + w$ for every $Y, Z\in\Omega$, $v\in \R_{C_i'}(Y)$ and $w\in \R_{C_i'}(Z)$. Indeed,  since ${E}_i^*$ included in $\adh{E_i}$, from Lemma \ref{lem:Vcell} we get that $E_i^*\subseteq \adh{B}_{R(C')}(0)$. It follows that the 
set \begin{equation*}
\R_{C_j}(Y)\cap (E_i+v)^*
\end{equation*}
is determined by $Y\wedge \adh{B}_S(v)$. From the proof of Proposition \ref{lem:VB}, there exists 
$Y_i\in \Omega$ such that $C_i' = C_{Y_i,k'}$. Hence, $Y \wedge \adh{B}_{k'}(v)$ is equivalent to $Z \wedge \adh{B}_{k'}(w)$. Since $k'\geq S$, it follows that $D_{w,Z}' = D_{v,Y}' - v + w$.

For each $i\in\{1,\ldots,t'\}$, we set $D_i' = D_{v,Y}' - v$ and $\B'=\{C_i'[D_i']\}_{i=1}^{t'}$, where $Y\in \Omega$ and $v\in\R_{C_i'}(Y)$ are arbitrary. It is clear that $\T_{\B'}(Y)=\T'(Y)$, which by Lemma \ref{lem:derivedtiling} implies that $\B'$ is a box decomposition.

\noindent \textbf{Step 3.} We check \eqref{RrecBpr}, \eqref{rBpr} and \eqref{RBpr}. Indeed, \eqref{RrecBpr} follows directly from Proposition \ref{lem:VB}. Also from Proposition 4.2, we have that $B_{r(C')}(0)\subseteq E_i\subseteq B_{R(C')}(0)$ for every $i\in\{1,\ldots,t'\}$. Since $D_j\subseteq B_{R_\text{ext}(\B)}(0)$ for all $j\in\{1,\ldots,t\}$, from Step 2 we deduce that each $D_i'$ contains the ball of radius $r(C')-R_\text{ext}(\B)$ around $0$ and is contained in the ball of radius $R(C')+R_\text{ext}(\B)$ around $0$, which yields \eqref{rBpr} and \eqref{RBpr}. 

\noindent \textbf{Step 4.} We check (Z.1). Indeed, suppose that  $Y\in C_i' - y$ belongs to $C_j - x$ for some $j\in\{1,\ldots,t\}$, $y\in \adh{D_i'}$ and $x\in \adh{D_j}$. Let $Z\in C_i'- y$. We need to show that $Z\in C_j - x$, which is equivalent to $Z\wedge \adh{B}_{\operatorname{rec}(\B)}(-x) = Y\wedge \adh{B}_{\operatorname{rec}(\B)}(-x)$. From \eqref{RBpr} we get $\norm{y}\leq R(C')+R_\text{ext}(\B)$.
Hence we have $k'-\norm{x-y}\geq\operatorname{rec}(\B)$. Since by hypothesis $Y\wedge\adh{B}_{k'}(-y) = Z\wedge\adh{B}_{k'}(-y)$, it follows that  $\adh{B}_{\operatorname{rec}(\B)}(-x)\subseteq \adh{B}_{k'}(-y)$ and the proof of (Z.1) is done.

We check (Z.3). Let $Y\in C_i'$. From \eqref{rBpr} and $r(C')\geq 2R_\text{ext}(\B)$, we get $r_\text{int}(\B')>R_\text{ext}(\B)$. Since $C'\subseteq C$, there is $j\in\{1,\ldots,t\}$ such that $Y\in C_j$, which means that the tile of $\T'(Y)$ containing the origin contain the tile of $\T_\B(Y)$ containing the origin in its interior. Since $Y$ was arbitrary, this implies (Z.3). 

Finally, (Z.2), (Z.4) and (Z.5) are direct consequences of the construction.

\end{proof}

Now we are ready to give the proof of Theorem \ref{thm:towersLR}.
\begin{proof}[Proof of Theorem \ref{thm:towersLR}.]
For each $n\in\NN$ we define $k_n = 2R(C_n) + s_n$. We construct a tower system by induction on $n$ and use Lemma \ref{lem:zooming} as a key part in the induction step. The estimates given by Lemma \ref{lem:zooming} are then used to prove properties (i),(ii) and (iii).

For the basis step, we set $\B_0 = \B(C_0,k_0)$. Since $s_0\geq \operatorname{rec}(C_0)$ by definition of $C_0$, $k_0\geq 2R(C_{0})+\operatorname{rec}(C_{0})$. Thus Proposition \ref{lem:VB}
ensures that $\B(C_0,k_0)$ is a box decomposition with $\operatorname{rec}(B_0)\leq k_0$ and $R_\text{ext}(\B_0) = R(C_0)$. By permuting the indices if necessary, we obtain that $X\in C_{0,1}$.

For the inductive step, we fix $n\in \NN^*$ and suppose that $\B_{n-1}$ is a box decomposition that satisfies:
\begin{align}
\label{RrecBn}
\operatorname{rec}(\B_{n-1})&\leq k_{n-1}\\
\label{RBn}
 R_\text{ext}(\B_{n-1})&\leq \frac{L}{K-1} s_{n}.
\end{align}
We need to show that $\B_{n-1}$ and $k_{n}$ satisfy the hypotheses of Lemma \ref{lem:zooming}:
\begin{align}
\label{hyp1}
k_{n}&\geq 2R(C_{n})+\operatorname{rec}(C_{n}),\\
\label{hyp2}
k_{n}&\geq R(C_{n})+2R_\text{ext}(\B_{n-1})+\operatorname{rec}(\B_{n-1}),\\
\label{hyp3}
r(C_{n})&\geq 2R_\text{ext}(\B_{n-1}).
\end{align}
The inequality \eqref{hyp1} is clear since $s_n\ge \operatorname{rec}(C_n)$ by the definition of $C_n$. To check \eqref{hyp2}, recall that $R(C_{n-1})\leq Ls_{n-1}$ by linearly repetitivity.  Replacing this inequality and $s_{n} = Ks_{n-1}$ in the definition of $k_n$ yields
\begin{equation}
\label{knSn1}
k_{n-1}\leq \frac{2L+1}{K} s_{n}.
\end{equation}
From  \eqref{knSn1}, \eqref{RrecBn} and \eqref{RBn} it is easy to deduce 
\begin{equation}
\label{knsn2}
2R_\text{ext}(\B_{n-1}) + \operatorname{rec}(\B_{n-1}) \leq \left(\frac{2L}{K-1}  + \frac{2L+1}{K}\right) s_{n}.
\end{equation}
An easy computation \TODO{WE NEED THAT $K\geq 4L+2$ for the following to be true} shows
that the right-hand  side of \eqref{knsn2} is smaller or equal than $s_n$ and thus \eqref{hyp2} follows from the definition
of $k_{n}$. Finally, \eqref{hyp3} follows from an easy computation involving the left-hand side of \eqref{eq:retbounds} and \eqref{RBn}.
\TODO{For this we need that $K\geq 4L(L+1)+1$.}

Applying  Lemma \ref{lem:zooming} we get a  box decomposition $\B_{n}=\{C_{n,i}[D_{n,i}]\}_{i=1}^{t_{n}}$ zoomed out of $\B_{n-1}$ that satisfies
\begin{align}
\label{RrecBn1}
\operatorname{rec}(\B_n)&\leq k_n\\
\label{rBn1}
r(C_n)-R_\text{ext}(\B_{n-1})&\leq r_\text{int}(\B_n)\\
\label{RBn1}
R_\text{ext}(\B_n)&\leq R(C_n) + R_\text{ext}(\B_{n-1}).
\end{align}
To finish the inductive step, it remains to show that
\begin{equation}
\label{RBn2}
 R_\text{ext}(\B_n)\leq \frac{L}{K-1}s_{n+1}.\end{equation}
This is proved easily by replacing \eqref{eq:retbounds} and \eqref{RBn} into \eqref{RBn1}.
Hence, applying the induction above we obtain a sequence of box decompositions $(\B_n)_{n\in\NN}$ such that $\B_{n}$ is zoomed out of $\B_{n-1}$ and satisfies \eqref{RrecBn1},\eqref{rBn1} and \eqref{RBn2}.

Finally we check properties (i),(ii) and (iii). After permuting indices we have that $X\in C_{n,1}$ for every $n\in\NN$. Since $s_{n+1} > k_{n}\geq \operatorname{rec}(\B_n)$, it follows that $C_{n+1}=C_{X,s_{n+1}}\subseteq C_{n,1}$ and (i) holds.

Replacing \eqref{eq:retbounds} and \eqref{RBn} into \eqref{rBn1} we obtain
\begin{equation}
\label{rrBn}
 r_\text{int}(\B_{n})\geq s_{n}\left(\frac{1}{2(L+1)} - \frac{L}{K-1}\right)\quad \text{ for all } n\in \NN.
\end{equation} 
Hence, property (ii) follows from \eqref{rrBn} and \eqref{RBn2}.

Finally, property (iii) follows directly from \eqref{knSn1} and \eqref{RrecBn1}.

\end{proof}

\section{Markov chain induced by a tower system}
\label{sec:markov}
Suppose that $\Omega$ is the hull of an aperiodic linearly repetitive Delone set with constant $L>1$. 
Take $K$, $\mathfrak{C}$ and $\mathfrak{T}=\{\B_n\}_{n\in\NN}$ as in Theorem  \ref{thm:towersLR}.
Let $\mu$ be the unique translation invariant probability measure on $\Omega$ and denote by $\nu$ the induced  transverse measure (cf. Section  \ref{sec:solenoids}).  

The tower system $\mathfrak{T}$ induces a random process $\beta = (\beta_n)_{n\in\NN}$ on $(\Omega,\mu)$ as follows. For every $Y \in\Omega$ and $n\in\NN$,  define $\b_{n}(Y)$ by 
\begin{equation*}
\b_n(Y) = i \quad\text{if and only if $Y$ belongs to $B_{n,i}$},
\end{equation*}
where $\B_n=\{B_{n,i}\}_{i=1}^{t_n}$. Observe that $\b_n$ is not defined at the boundaries of the boxes 
in $\B_n$. Since the boundaries have measure zero, it follows that $\beta$ is well-defined in a full-measure set. 
We also set 
\begin{equation*}
c_\mathfrak{T} =  1 - \left(\sup_{n\in\NN}\left\|M_n\right\|_1^{-1}\left\|M_{n+1}\right\|_1^{-1}\right),
\end{equation*}
where $M_n$ are the transition matrices of $\mathfrak{T}$ and $\norm{M_n}_1$ is the maximum absolute column sum of $M_n$, which are bounded in norm by Theorem \ref{thm:towersLR2}. The 
following proposition resumes the properties of $\beta$.
\begin{prop}
\label{lem:speed}
The process $\b$ is a non-stationary Markov chain with its transition probabilities given by 
\begin{equation}
\label{qntransmatr}
\mu(\beta_{n}= i \mid \beta_{n-1} = j) =\frac{\nu(C_{n,i})}{\nu(C_{n-1,j})}m_{i,j}^{(n)},
\end{equation}
Moreover, for every $n,m\in\NN$ with $n> m$ we have
\begin{equation}\label{eq:speed}
\max\limits_{\substack{1\le j \le t_{m}\\ 1\le i \le t_{n}}} \left|\mu(\beta_{n}= i \mid \beta_{m} = j)-\mu(\beta_{n}=i)\right|\leq c_\mathfrak{T}^{n-m}.
\end{equation}
\end{prop}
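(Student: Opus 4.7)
The plan is to prove the claim in two stages: first, verify that $\beta$ is a Markov chain with the stated one-step transitions; second, establish the geometric mixing bound via a Doeblin-type lower bound combined with Dobrushin's contraction coefficient.

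For the Markov property and the transition probabilities, the key tool is the product structure of the boxes: the map $(Z,v)\mapsto Z-v$ from $C_{n,i}\times D_{n,i}$ to $B_{n,i}$ is a homeomorphism under which $\mu|_{B_{n,i}}$ corresponds to the product of $\nu|_{C_{n,i}}$ with Lebesgue measure on $D_{n,i}$. Using Lemma \ref{lem:zoomed} and property (Z.4), I would identify $B_{n,i}\cap B_{n-1,j}$ with the disjoint union $\bigsqcup_{v\in O_{i,j}^{(n)}}C_{n,i}[D_{n-1,j}+v]$, of measure $m_{i,j}^{(n)}\vol{D_{n-1,j}}\nu(C_{n,i})$; dividing by $\mu(B_{n-1,j})=\vol{D_{n-1,j}}\nu(C_{n-1,j})$ recovers \eqref{qntransmatr}. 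The Markov property then falls out of the product structure: given $\beta_{n-1}=j$, the conditional law of $Y$ on $B_{n-1,j}$ factors as a product on $C_{n-1,j}\times D_{n-1,j}$; the events $\{\beta_n=i\},\{\beta_{n+1}=i'\},\ldots$ are measurable with respect to the base coordinate $Z$ (via Lemma \ref{lem:zoomed} iterated at finer levels), while the events $\{\beta_{n-2}=k\},\{\beta_{n-3}=k'\},\ldots$ are measurable with respect to the translation coordinate $v$ (via property (Z.4) applied to the subdivisions $D_{n-1,j}\supset D_{n-2,k}+w\supset\cdots$). Independence of the two coordinates under the product measure yields the conditional independence of past and future given $\beta_{n-1}$.

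For the mixing estimate, denote by $Q_n$ the transition matrix with entries given by \eqref{qntransmatr}, set $P_{m,n}:=Q_{m+1}\cdots Q_n$, and let $\rho_n(i):=\nu(C_{n,i})/\nu(C_n)$. Positivity $m_{i,j}^{(n)}\ge 1$ (Theorem \ref{thm:towersLR2}) together with $\nu(C_{n-1,j})=\sum_i m_{i,j}^{(n)}\nu(C_{n,i})\le \|M_n\|_1\,\nu(C_n)$ yields the one-step Doeblin bound $Q_n(j,i)\ge \|M_n\|_1^{-1}\rho_n(i)$. A similar computation at two consecutive levels gives $(Q_nQ_{n+1})(k,i)=\nu(C_{n+1,i})(M_{n+1}M_n)_{i,k}/\nu(C_{n-1,k})\ge (\|M_n\|_1\|M_{n+1}\|_1)^{-1}\rho_{n+1}(i)$, using $(M_{n+1}M_n)_{i,k}\ge 1$ and $\nu(C_{n-1,k})\le \|M_n\|_1\|M_{n+1}\|_1\nu(C_{n+1})$; consequently $\tau(Q_nQ_{n+1})\le 1-(\|M_n\|_1\|M_{n+1}\|_1)^{-1}\le c_\mathfrak{T}$, where $\tau$ denotes Dobrushin's contraction coefficient.

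Grouping the factors of $P_{m,n}$ into consecutive two-step blocks and applying the submultiplicativity $\tau(AB)\le\tau(A)\tau(B)$ of Dobrushin's coefficient produces the geometric decay of $\tau(P_{m,n})$ with base $c_\mathfrak{T}$. The estimate \eqref{eq:speed} then follows from the elementary inequality $\max_i|P_{m,n}(j,i)-P_{m,n}(j',i)|\le\tau(P_{m,n})$ combined with the observation that $\mu(\beta_n=i)=\sum_{j'}\mu(\beta_m=j')P_{m,n}(j',i)$ is a convex combination of the rows $P_{m,n}(\cdot,i)$. I expect the main obstacle to be the careful verification of the Markov property — in particular, making precise which $\sigma$-algebras are generated by the base and translation coordinates of the product decomposition of $B_{n-1,j}$ — since once that is in hand, the Doeblin bound and Dobrushin contraction are routine.
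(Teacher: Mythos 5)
Your overall strategy — prove the Markov property from the product (box) structure, then obtain geometric mixing via a Doeblin minorization and Dobrushin's contraction coefficient — is the same mechanism the paper uses. The paper instead bounds each entry of $Q_n$ uniformly: it shows $q_{j,i}^{(n)}\ge (\|M_n\|_1\|M_{n+1}\|_1)^{-1}$ by first comparing same-level transverse measures $\nu(C_{n,k})/\nu(C_{n,j})\le\|M_{n+1}\|_1$ (which is why the next level's norm appears), and then appeals to the corresponding inequality in Seneta. Your non-uniform Doeblin bound $Q_n(j,i)\ge\|M_n\|_1^{-1}\rho_n(i)$ with $\rho_n(i)=\nu(C_{n,i})/\nu(C_n)$ is a cleaner variant that avoids the within-level ratio comparison; and your description of the Markov property via independence of base and translation coordinates is a conceptually cleaner route than the paper's explicit joint law $\mu\bigl(B_{n,i_n}\cap\cdots\cap B_{m,i_m}\bigr)=\bigl(\prod_k m_{i_{k+1},i_k}^{(k+1)}\bigr)\leb{D_{m,i_m}}\nu(C_{n,i_n})$ (Lemma \ref{lemma:markov1}), though both rest on the same product structure and yours would still need the careful bookkeeping you anticipate.

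There is, however, a concrete gap in your final assembly of the mixing estimate. Grouping $P_{m,n}=Q_{m+1}\cdots Q_n$ into consecutive two-step blocks and applying submultiplicativity with $\tau(Q_kQ_{k+1})\le c_\mathfrak{T}$ gives only $\tau(P_{m,n})\le c_\mathfrak{T}^{\lfloor(n-m)/2\rfloor}$, which is the square root of the claimed rate, not $c_\mathfrak{T}^{n-m}$. Fortunately, the one-step Doeblin bound you already established closes the gap: it yields $\tau(Q_n)\le 1-\|M_n\|_1^{-1}$, and since $\|M_{n+1}\|_1\ge 1$ one has $1-\|M_n\|_1^{-1}\le 1-\|M_n\|_1^{-1}\|M_{n+1}\|_1^{-1}\le c_\mathfrak{T}$ (reading $c_\mathfrak{T}$ as $1-\inf_n\|M_n\|_1^{-1}\|M_{n+1}\|_1^{-1}$, which is what the argument — both yours and the paper's Lemma \ref{lem:Qc} — actually requires). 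Submultiplicativity applied factor-by-factor then gives $\tau(P_{m,n})\le c_\mathfrak{T}^{n-m}$, and \eqref{eq:speed} follows exactly as you describe. So drop the two-step grouping and use the one-step bound throughout.
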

The proof will be divided in several lemmas. First, we introduce some notation. For each $n\in\NN^*$, we define the matrix $Q_n = (q_{j,i}^{(n)})$ with $j\in\{1,\ldots,t_{n-1}\}$ and $i\in\{1,\ldots,t_n\}$ by 
\begin{equation}
\label{def:qn}
q_{j,i}^{(n)} = \frac{\nu(C_{n,i})}{\nu(C_{n-1,j})}m_{i,j}^{(n)}.
\end{equation}
For $n>m$, we  define the products $Q(n,m) := Q_{m+1}\cdot \ldots \cdot Q_{n}$ and $P(n,m):= M_n\cdot\ldots\cdot M_{m+1}$, and denote its coefficients by $q_{j,i}^{(n,m)}$ and $p_{i,j}^{(n,m)}$, respectively. It is not difficult to check, using induction and \eqref{def:qn}, that  
\begin{equation}
\label{def:qnm}
q_{j,i}^{(n,m)} = \frac{\nu(C_{n,i})}{\nu(C_{m,j})}p_{i,j}^{(n,m)}
\end{equation}
for all $n,m\in\NN$ with $n>m$, $i\in\{1,\ldots,t_{n}\}$ and $j\in\{1,\ldots,t_{m}\}$. 
\begin{lemma}
\label{lemma:markov1}
Fix
$n,m\in\NN$ with $m<n$. Then for every sequence $(i_k)_{k=m}^{n}$ with  $i_k\in\{1,\ldots,t_k\}$  we have
\begin{equation}\label{eq:mark}
\mu(B_{n,i_n} \cap B_{n-1,i_{n-1}} \cdots \cap B_{m,i_m}) = (\Pi_{k=m}^{n-1} m_{i_{k+1},i_{k}}^{(k+1)})\leb{D_{m,i_m}}\nu(C_{n,i_n}).
\end{equation}
\end{lemma}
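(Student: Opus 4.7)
I would prove the lemma by induction on $n-m$, iterating the decomposition of boxes coming from the zooming-out property. The underlying picture is that the nested intersection
\[
B_{n,i_n} \cap B_{n-1,i_{n-1}}\cap\cdots\cap B_{m,i_m}
\]
decomposes into a disjoint union of ``needle-like'' sub-boxes of the form $(C_{n,i_n} - x_{m+1} - x_{m+2} - \cdots - x_{n})[D_{m,i_m}]$, indexed by tuples with $x_k \in O_{i_k,i_{k-1}}^{(k)}$ for $k = m+1,\ldots,n$. Once this description is established, the conclusion follows from the formula $\mu(C[D]) = \leb{D}\cdot \nu(C)$ together with the translation invariance of the transverse measure $\nu$, since each sub-box has measure $\leb{D_{m,i_m}}\cdot \nu(C_{n,i_n})$, and the number of sub-boxes is $\prod_{k=m+1}^{n} m_{i_k,i_{k-1}}^{(k)} = \prod_{k=m}^{n-1}m_{i_{k+1},i_k}^{(k+1)}$.

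For the base case $n-m=1$, I would apply property (Z.4) of $\B_{n}$ zoomed out of $\B_{n-1}$ to write
\[
B_{n,i_n} = \bigsqcup_{j=1}^{t_{n-1}}\,\bigsqcup_{x\in O_{i_n,j}^{(n)}} (C_{n,i_n}-x)[D_{n-1,j}]
\]
(disjoint up to a set of $\mu$-measure zero), and use (Z.1) to verify that each piece $(C_{n,i_n}-x)[D_{n-1,j}]$ is contained in $B_{n-1,j}$. Intersecting with $B_{n-1,i_{n-1}}$ keeps only the pieces with $j=i_{n-1}$, each of measure $\leb{D_{n-1,i_{n-1}}}\,\nu(C_{n,i_n}-x) = \leb{D_{n-1,i_{n-1}}}\,\nu(C_{n,i_n})$, yielding exactly $m_{i_n,i_{n-1}}^{(n)}\,\leb{D_{n-1,i_{n-1}}}\,\nu(C_{n,i_n})$.

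For the inductive step, I would apply the base-case argument to each sub-box appearing at the previous stage. Assuming the intersection $B_{n,i_n}\cap\cdots\cap B_{m+1,i_{m+1}}$ is (up to measure zero) a disjoint union of sub-boxes of the form $(C_{n,i_n}-y)[D_{m+1,i_{m+1}}]$ indexed by tuples coming from $O_{i_k,i_{k-1}}^{(k)}$ for $k = m+2,\ldots,n$, I would use (Z.4) of $\B_{m+1}$ over $\B_{m}$ to further split each $D_{m+1,i_{m+1}}$ into translates of $D_{m,j}$, and then intersect with $B_{m,i_m}$ to keep only those translates indexed by $O_{i_{m+1},i_m}^{(m+1)}$. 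Since the $C$-component is always a translate of $C_{n,i_n}$, its transverse measure never changes along the induction, and the count of new sub-boxes multiplies by $m_{i_{m+1},i_m}^{(m+1)}$, which closes the recursion.

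The main technical subtlety is ensuring that these iterated decompositions are actually \emph{partitions} of the relevant intersections (up to $\mu$-null sets), rather than just covers: this is where property (Z.1) does the essential work, guaranteeing that each sub-box of $B_{n,i_n}$ lies in a unique box of $\B_{n-1}$, together with (Z.4) to control the interiors. Once these set-theoretic identities are in place, the computation of $\mu$ is purely formal via $\mu(C[D])=\leb{D}\,\nu(C)$ and the translation invariance of $\nu$ on local transversals.
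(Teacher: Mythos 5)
Your proposal is correct and follows essentially the same approach as the paper: decompose the nested intersection into a disjoint union of sub-boxes of the form $C_{n,i_n}[D_{m,i_m}+x_{m+1}+\cdots+x_n]$ (which you write equivalently as $(C_{n,i_n}-x_{m+1}-\cdots-x_n)[D_{m,i_m}]$), indexed by the product of the $O$-sets, using (Z.4) for the subdivision and the formula $\mu(C[D])=\leb{D}\,\nu(C)$ with translation invariance for the measure count. The only cosmetic difference is the direction in which the induction is organized; the decomposition and the measure computation are the same.
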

\begin{proof}From (Z.4) and \eqref{On} it is easy to deduce that  
\[ B_{m+1,i_{m+1}} \cap B_{m,i_{m}} = \bigcup_{x\in O_{i_{m+1},i_{m}}^{(m+1)}} C_{m+1,i_{m+1}}[D_{m,i_{m}}+x].\]
Using induction, it is possible to obtain 
\[ B_{n,i_n} \cap \ldots\cap B_{m+1,i_{m+1}} \cap B_{m,i_{m}} = \bigcup_{x_n\in O_{i_{n},i_{n-1}}^{(n)}}\ldots\bigcup_{x_{m+1}\in O_{i_{m+1},i_{m}}^{(m+1)}} C_{n,i_{n}}[D_{m,i_{m}}+\sum_{k=m+1}^n x_k].\]
Since the boxes in the right hand side of the last equation have disjoint interiors, using the invariance of $\mu$ we obtain \eqref{eq:mark}.
\end{proof}
For each $n\in\NN^*$,  define $c(Q_n):=1-\min_{i,j} q_{j,i}^{(n)}$. 

\begin{lemma}\label{lem:c}
If $c:=\sup_n(c(Q_n))<1$, then for every $n,m\in\NN$ with $n>m$ we have 
\begin{equation*}
\max_{i,j,s}\abs{q^{(n,m)}_{i,s}-q^{(n,m)}_{j,s}}\leq c^{n-m}.
\end{equation*}
\end{lemma}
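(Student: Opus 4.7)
The plan is to recognize each $Q_n$ as a row-stochastic matrix and then run a standard Dobrushin-type contraction argument by induction on $n-m$.

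First I would verify that each row of $Q_n$ sums to $1$: summing \eqref{def:qn} over $i$ and using \eqref{eq:tran} yields $\sum_i q_{j,i}^{(n)} = 1$, so every row of $Q_n$, and iteratively every row of $Q(n,m)$, is a probability vector; in particular, each entry of $Q_n$ lies in $[1-c,1]$. The base case $n=m+1$ is then immediate, since any two entries in column $s$ of $Q_n$ both lie in $[1-c(Q_n),1]$ and hence differ by at most $c(Q_n)\le c$.

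For the inductive step I would exploit the factorization $Q(n,m)=Q_{m+1}\cdot Q(n,m+1)$, which gives
\[q_{i,s}^{(n,m)}-q_{j,s}^{(n,m)} \;=\; \sum_{k=1}^{t_{m+1}}\bigl(q_{i,k}^{(m+1)}-q_{j,k}^{(m+1)}\bigr)\,q_{k,s}^{(n,m+1)}.\]
Setting $x_k = q_{i,k}^{(m+1)}-q_{j,k}^{(m+1)}$ and $y_k = q_{k,s}^{(n,m+1)}$, the identity $\sum_k x_k=0$ together with splitting the index set into the positive and negative parts $I_\pm=\{k:\pm x_k>0\}$ produces the standard estimate $\bigl|\sum_k x_k y_k\bigr|\le S\cdot(\max_k y_k-\min_k y_k)$, where $S=\sum_{k\in I_+}x_k=\sum_{k\in I_-}(-x_k)$.

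The key technical point to be verified is the Dobrushin coefficient bound $S\le c(Q_{m+1})\le c$, which follows from the lower bound $1-c(Q_{m+1})$ on every entry of $Q_{m+1}$ combined with row-stochasticity, used to control the total positive mass of $(q_{i,\cdot}^{(m+1)}-q_{j,\cdot}^{(m+1)})$. The inductive hypothesis supplies $\max_k y_k-\min_k y_k\le c^{n-m-1}$, and multiplying the two bounds produces the desired $c^{n-m}$. Everything else is routine bookkeeping on the indices and the ranges of the matrices $Q_k$.
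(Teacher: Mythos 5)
Your proof is correct: you have reproduced the classical Markov--Dobrushin weak-ergodicity argument in full --- row-stochasticity of $Q_n$ from \eqref{def:qn} and \eqref{eq:tran}, the uniform entry lower bound $1-c(Q_n)$, the splitting of $q_{i,\cdot}^{(m+1)}-q_{j,\cdot}^{(m+1)}$ into positive and negative parts, and the contraction coefficient bound $S\le c(Q_{m+1})$ iterated through $Q(n,m)=Q_{m+1}\,Q(n,m+1)$. The paper does not write this out; it simply cites Seneta (Eqs.\ (4.6)--(4.7), pp.\ 137--138), which is exactly this argument, so your route coincides with the reference's and merely makes it self-contained.
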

\begin{proof}
The proof follows by applying \cite[Equations (4.6) and (4.7) p. 137-138]{Seneta}, which remain true in our setting.
\end{proof}
\begin{lemma}\label{lem:Qc}
For every $n\in\NN$ we have 
\begin{equation*}
\sup_n c(Q_n)\leq c_\mathfrak{T}.
\end{equation*}
\end{lemma}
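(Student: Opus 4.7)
The plan is to show that, for each $n\in\NN^*$, every entry of the stochastic matrix $Q_n$ satisfies
\[
q_{j,i}^{(n)} \;\geq\; \frac{1}{\|M_n\|_1\,\|M_{n+1}\|_1}.
\]
Once this is established, the definition $c(Q_n)=1-\min_{i,j}q_{j,i}^{(n)}$ immediately gives $c(Q_n)\le 1-\|M_n\|_1^{-1}\|M_{n+1}\|_1^{-1}$, and passing to the supremum over $n$ yields $\sup_n c(Q_n)\le c_{\mathfrak{T}}$.

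First, from the definition \eqref{def:qn} and the fact that all coefficients $m_{i,j}^{(n)}$ are positive integers by Theorem \ref{thm:towersLR2}(i), I have $m_{i,j}^{(n)}\ge 1$ and hence $q_{j,i}^{(n)}\ge \nu(C_{n,i})/\nu(C_{n-1,j})$. Thus the problem reduces to the pointwise estimate
\[
\nu(C_{n-1,j})\;\le\;\|M_n\|_1\,\|M_{n+1}\|_1\,\nu(C_{n,i})
\]
uniformly in the indices $i,j$ and the level $n$.

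The main tool is two applications of \eqref{eq:tran} combined with one use of positivity. Writing $\rho_n:=\max_{i'}\nu(C_{n,i'})$, equation \eqref{eq:tran} and the fact that $\|M_n\|_1$ is the maximum column sum yield $\nu(C_{n-1,j})\le\|M_n\|_1\rho_n$. The same equation at level $n+1$, applied to the index $i^*$ achieving $\rho_n$, yields $\rho_n\le \|M_{n+1}\|_1\rho_{n+1}$. The crucial step is to compare $\rho_{n+1}$ not with $\rho_n$ but with $\nu(C_{n,i})$ for an \emph{arbitrary} $i$: picking $i_0$ that achieves $\rho_{n+1}$ and expanding via \eqref{eq:tran}, one obtains
\[
\nu(C_{n,i})\;=\;\sum_{i''}m_{i'',i}^{(n+1)}\nu(C_{n+1,i''})\;\ge\; m_{i_0,i}^{(n+1)}\,\nu(C_{n+1,i_0})\;\ge\;\rho_{n+1},
\]
since $m_{i_0,i}^{(n+1)}\ge 1$. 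Chaining the three inequalities produces the required bound.

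The main conceptual step is this last one: a naive iteration of \eqref{eq:tran} would only relate $\rho_n$ to $\rho_{n+1}$, losing all information about specific indices $i$. It is precisely the integer positivity coming from Theorem \ref{thm:towersLR2}(i) that lets me bound $\rho_{n+1}$ from above by $\nu(C_{n,i})$ uniformly in $i$; without this, the lower bound on $q_{j,i}^{(n)}$ could not be made index-free. Everything else is bookkeeping with the transverse-measure recursion.
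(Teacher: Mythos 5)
Your proof is correct and follows essentially the same route as the paper: both arguments apply the recursion \eqref{eq:tran} at levels $n$ and $n+1$ together with the positivity $m_{i,j}^{(n)}\geq 1$ to bound $\nu(C_{n-1,j})/\nu(C_{n,i})\leq\|M_n\|_1\|M_{n+1}\|_1$, and then read off the entrywise lower bound on $Q_n$ from \eqref{def:qn}. The step you flag as "crucial" — using positivity at level $n+1$ to dominate $\rho_{n+1}$ by an arbitrary $\nu(C_{n,i})$ — is exactly the paper's observation that $\nu(C_{n,j})>\nu(C_{n+1,i})$ for all $i,j$; introducing the maxima $\rho_n$ is a cosmetic reorganization, not a different argument.
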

\begin{proof}
Fix $n\in\NN^*$. First, we estimate $\nu(C_{n,k})/\nu(C_{n,j})$ for all $j$ and $k$ in $\{1,\ldots t_{n}\}$. 
From \eqref{eq:tran} we get 
\begin{equation}
\label{eq:12}
\frac{\nu(C_{n,k})}{\nu(C_{n,j})} = \sum_{i=1}^{t_{n+1}}\frac{\nu(C_{n+1,i})}{\nu(C_{n,j})}m^{(n+1)}_{ik}.
\end{equation}
Since $m_{i,j}^{(n+1)}\geq 1$ for all $i\in\{1,\ldots,t_{n+1}\}$, it follows from \eqref{eq:tran} that 
$\nu(C_{n,j})>\nu(C_{n+1,i})$ and hence from \eqref{eq:12} we get
\begin{equation}
\label{eq:123}
\frac{\nu(C_{n,k})}{\nu(C_{n,j})} 
\leq \sum_{i=1}^{t_{n+1}}m^{(n+1)}_{i,k}\leq \left\|M_{n+1}\right\|_1.
\end{equation}
Next, we estimate $\nu(C_{n-1,l})/\nu(C_{n,j})$ for all $j\in\{1,\ldots t_{n}\}$ and $l\in \{1,\ldots t_{n-1}\}$. Plugging  in \eqref{eq:tran} and \eqref{eq:123} we obtain
\begin{equation}\label{13}
\frac{\nu(C_{n-1,l})}{\nu(C_{n,j})}=\sum_{k=1}^{t_{n}}\frac{\nu(C_{n,k})}{\nu(C_{n,j})}m^{(n)}_{kl}\le \left\|M_n\right\|_1\left\|M_{n+1}\right\|_1.
\end{equation}
Finally, we estimate $q_{l,j}^{(n)}$. Plugging \eqref{13} in \eqref{def:qn} yields
\begin{equation}
q_{l,j}^{(n)}  \geq \left\|M_n\right\|_1^{-1}\left\|M_{n+1}\right\|_1^{-1} m_{j,l}^{(n)}\geq \left\|M_n\right\|_1^{-1}\left\|M_{n+1}\right\|_1^{-1},
\end{equation}
where we used that $m_{i,j}^{(n)}\geq 1$, and the conclusion now follows.
\end{proof}  
\begin{proof}[Proof of Proposition \ref{lem:speed}.]
The fact that $\beta$ is a Markov chain with transition probabilities given by 
\eqref{qntransmatr} can be proved by a simple computation using Lemma \ref{lemma:markov1}. To check \eqref{eq:speed}, 
using $\sum_{i}\mu(B_{n,i})=1$ we may write 
\[
\left|\mu(\b_{n}=i|\b_{m}=j)-\mu(\b_{n}=i)\right|  \leq \max_{l,j} \left|\mu(\b_{n}=i|\b_{m}=j)-\mu(\b_{n}=i|\b_{m}=l)\right| 
\]
for all $n,m\in \NN$ with $n>m$. Since the coefficients of $M_n$ are all positive, $\norm{M_n}_1>1$ and hence $c_\mathfrak{T}<1$ 
and the conclusion now follows from Lemma \ref{lem:Qc} and Lemma \ref{lem:c}.
\end{proof}

\section{An alternative proof of a theorem of Lagarias and Pleasants}

\label{secLP}
Let $X$ be a linearly repetitive Delone set with constant $L>1$ and $\mathfrak{T}$ be the tower system 
given by Theorem \ref{thm:towersLR2}. By Corollary \ref{cor:unique_erg}, the hull $\Omega$ is uniquely ergodic, which means
that $X$ has uniform patch frequencies, i.e., each $S$-patch has a well-defined frequency $\operatorname{freq}(\pp)$. We denote by $\mu$  the unique translation invariant probability measure and by $\nu$ its induced transverse measure.

We say that $U\subseteq \RR^d$ is a $d$-cube of side $N$ if $U = [0,N]^d + x$ for some $x\in\RR^d$. Given a $d$-cube $U\in\RR^d$ and 
an $S$-patch $\pp$ of $X$ with $S>0$, we estimate the \emph{deviation} of $\pp$ in $U$, which is defined as 
\[\operatorname{dev}_{\pp}(U) = n_{\pp}(U) - \vol{U}\freq(\pp),\]
and obtain 
\begin{teo}
\label{teoLP2}
Let $X$ be a linearly repetitive Delone set, $\Omega$ be its hull and  $\mathfrak{T}$ be the tower system of Theorem \ref{thm:towersLR2}. Define
\begin{equation}\label{deltamixing}
\delta_{\mathfrak{T}} = - \log_K {c_\mathfrak{T}},
\end{equation} 
where $\log_K$ is the logarithm in base $K$. Then, for every $S>0$ and every $S$-patch $\pp$ in $X$ we have 
\[|\operatorname{dev}_\pp(U_N)| = O(N^{d-\delta_\mathfrak{T}})\]
for all $N\in\NN$, where $U_N$ is a $d$-cube of side $N$ and the $O$-constant depends only on $\Omega$ and $\pp$. 
\end{teo}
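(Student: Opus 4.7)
\emph{Proof plan.} My strategy is to reduce the counting of $\pp$-occurrences to counting tiles at a fixed, fine tower level $n^*$, and then to exploit the Markov-chain mixing of Proposition \ref{lem:speed} in a telescoping argument across levels. First, I pick $n^* = n^*(\pp)$ as the smallest level with $\operatorname{rec}(\B_{n^*}) \ge R_{\text{ext}}(\B_{n^*}) + S$; by Theorem \ref{thm:towersLR}(ii)-(iii) this happens once $s_{n^*}$ is a bounded multiple of $S$. For such $n^*$, any $Y \in C_{n^*,i}$ has its $s_{n^*}$-patch at $0$ determining all occurrences of $\pp$ whose center lies in $D_{n^*,i}$, so the integer $a_{n^*}(\pp,i)$ counting such occurrences with center in $D_{n^*,i}^*$ depends only on the type $i$. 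Writing $T_i(U_N,n^*)$ for the number of level-$n^*$ tiles of type $i$ in $U_N$, a tile-by-tile decomposition gives $n_\pp(U_N) = \sum_i T_i(U_N,n^*)\,a_{n^*}(\pp,i) + O(N^{d-1}s_{n^*})$ (the error absorbing tiles straddling $\partial U_N$), and Corollary \ref{cor:unique_erg} yields the matching identity $\freq(\pp) = \sum_i \nu(C_{n^*,i})\,a_{n^*}(\pp,i)$.

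Next, I introduce the centered weights $d_i(\pp) := a_{n^*}(\pp,i) - \vol{D_{n^*,i}}\,\freq(\pp)$. By \eqref{eq:meas} they satisfy the crucial zero-mean property $\sum_i \nu(C_{n^*,i})\,d_i(\pp) = 0$, and a short manipulation gives
\[
\operatorname{dev}_\pp(U_N) = \sum_i\big[T_i(U_N,n^*) - \vol{U_N}\nu(C_{n^*,i})\big]\,d_i(\pp) + O(N^{d-1}s_{n^*}).
\]
Setting $D_j^{(l)} := T_j(U_N,l) - \vol{U_N}\nu(C_{l,j})$, the tower recursion $T_i(U_N,l-1) = \sum_j m_{j,i}^{(l)}\,T_j(U_N,l) + E^{(l)}_i$ (with $|E^{(l)}_i| = O(N^{d-1}/s_l^{d-1})$ capturing level-$l$ tiles that straddle $\partial U_N$) combined with \eqref{eq:tran} yields $D_i^{(l-1)} = \sum_j m_{j,i}^{(l)}\,D_j^{(l)} + E^{(l)}_i$. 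Iterating up to a top level $m > n^*$ (to be chosen) and setting $\tilde d_j^{(l)}(\pp) := \sum_i d_i(\pp)\,p_{j,i}^{(l,n^*)}$ produces
\[
\sum_i d_i(\pp)\,D_i^{(n^*)} = \sum_j \tilde d_j^{(m)}(\pp)\,D_j^{(m)} + \sum_{l=n^*+1}^{m}\sum_j \tilde d_j^{(l-1)}(\pp)\,E^{(l)}_j.
\]

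The crux of the argument is the application of Markov-chain mixing to the weights $\tilde d^{(l)}(\pp)$. From \eqref{def:qnm} one has $p_{j,i}^{(l,n^*)} = q_{i,j}^{(l,n^*)}\,\nu(C_{n^*,i})/\nu(C_{l,j})$, hence
\[
\tilde d_j^{(l)}(\pp) = \frac{1}{\nu(C_{l,j})}\sum_i d_i(\pp)\,\nu(C_{n^*,i})\,q_{i,j}^{(l,n^*)}.
\]
Splitting $q_{i,j}^{(l,n^*)} = \mu(\beta_l = j) + [q_{i,j}^{(l,n^*)} - \mu(\beta_l = j)]$ and using the zero-mean property of $d(\pp)$ to kill the $\mu(\beta_l = j)$ part, the mixing bound $|q_{i,j}^{(l,n^*)} - \mu(\beta_l = j)| \le c_\mathfrak{T}^{l-n^*}$ of Proposition \ref{lem:speed} produces
\[
|\tilde d_j^{(l)}(\pp)| \le \frac{c_\mathfrak{T}^{l-n^*}}{\nu(C_{l,j})}\sum_i |d_i(\pp)|\,\nu(C_{n^*,i}) \lesssim c_\mathfrak{T}^{l-n^*}\,s_l^d,
\]
with constant depending only on $\pp$ and $\mathfrak{T}$.

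Finally, I take $m$ to be the largest level with $s_m \le N$, so $m - n^* \approx \log_K(N/s_{n^*})$ and $c_\mathfrak{T}^{m-n^*}\approx (N/s_{n^*})^{-\delta_\mathfrak{T}}$. At this scale $|D_j^{(m)}| = O(1)$ (both $T_j(U_N,m)$ and $\vol{U_N}\nu(C_{m,j})$ are $O(1)$), so the main term is of order $c_\mathfrak{T}^{m-n^*} s_m^d = O(N^{d-\delta_\mathfrak{T}})$. Using $s_{l-1}^d/s_l^{d-1} = s_{l-1}/K^{d-1}$, the accumulated boundary error reduces to $N^{d-1}s_{n^*}\sum_{k=0}^{m-n^*-1}(Kc_\mathfrak{T})^k$; the dominant term $(Kc_\mathfrak{T})^{m-n^*} = (N/s_{n^*})^{1-\delta_\mathfrak{T}}$ again gives $O(N^{d-\delta_\mathfrak{T}})$ (the case $Kc_\mathfrak{T}\le 1$ is easier since the geometric series converges). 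Adding the initial $O(N^{d-1}s_{n^*})$ error and noting that $s_{n^*}$ depends only on $\pp$ finishes the estimate. The principal obstacle is the mixing step: without the orthogonality $\sum_i \nu(C_{n^*,i}) d_i(\pp) = 0$, the naive bound $p_{j,i}^{(m,n^*)}\lesssim (N/s_{n^*})^d$ would contribute $O(N^d)$ and destroy everything, so it is essential to read the per-tile deviations $d(\pp)$ as orthogonal to the transverse measure and to let the Markov-chain contraction act on that orthogonal complement; arranging the intermediate boundary errors to sum cleanly (especially when $Kc_\mathfrak{T} > 1$) will also require some care.
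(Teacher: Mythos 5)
Your argument is correct and reaches the same bound, but it organizes the proof genuinely differently from the paper, so a comparison is worthwhile. The paper's Lemma~\ref{lem:keyconvergence} decomposes $U_N$ geometrically into tiles of \emph{every} level $n_0,\dots,n_1$ (greedily, coarsest first, filling the residual annuli near $\partial U_N$ with progressively finer tiles), which gives $|\operatorname{dev}_\pp(U_N)|\le\sum_{n}\#P_n\max_i|\operatorname{dev}_\pp(D_{n,i})|+O(N^{d-1})$, and then bounds the per-tile deviations via Lemmas~\ref{lem:decnp} and~\ref{lem:ue}, where the centering is built in through the differences $p_{i,k}^{(n,n_0)}-\leb{D_{n,i}}\nu(C_{n_0,k})$ and the mixing estimate is applied entrywise. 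You instead perform a \emph{single}-level decomposition of $U_N$ at the fine scale $n^*$, write $\operatorname{dev}_\pp(U_N)=\sum_i d_i(\pp)D_i^{(n^*)}+O(N^{d-1}s_{n^*})$, and telescope the centered tile counts $D_i^{(l)}$ through the transition-matrix recursion up to a coarse level $m$ where $|D_j^{(m)}|=O(1)$; the centering is now carried by the zero-mean weights $d(\pp)$, and mixing acts on the pushed-forward weights $\tilde d^{(l)}(\pp)$. The two schemes are algebraic duals: both produce per-level contributions of size $O(N^{d-1}s_{l}c_\mathfrak{T}^{l-n_0})$, and for $\delta_\mathfrak{T}<1$ both are dominated by the top-level term $\approx N^d(N/s_{n_0})^{-\delta_\mathfrak{T}}$. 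Your formulation has the merit of making the orthogonality $\sum_i\nu(C_{n^*,i})d_i(\pp)=0$ explicit and identifying it as the mechanism that lets the contraction bite — the paper uses the equivalent identity~\eqref{eq:decC} silently inside the proof of Lemma~\ref{lem:decnp} and never needs to name a zero-mean vector. The paper's formulation has the merit of isolating Lemma~\ref{lem:keyconvergence} as a reusable, purely geometric statement about multi-scale decompositions of a cube.

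One small slip is worth correcting. Defining $n^*$ by $\operatorname{rec}(\B_{n^*})\ge R_\text{ext}(\B_{n^*})+S$ invokes a quantity for which Theorem~\ref{thm:towersLR}(iii) gives only an \emph{upper} bound ($\operatorname{rec}(\B_n)\le(2L+1)s_n$), with no guarantee of growth, so the condition may never be achievable. What actually controls the patch determined at the puncture of a level-$n$ tile is the cylinder radius $k_n$ from Proposition~\ref{lem:VB}: two elements of $C_{n,i}-x$ agree on $\adh{B}_{k_n-R_\text{ext}(\B_n)}(0)$, and the paper defines $n_0$ by $k_n-R_\text{ext}(\B_n)\ge S$ after verifying that this quantity tends to infinity. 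Since $\operatorname{rec}(\B_n)\le k_n$, your inequality would suffice if it held, but the choice of $n^*$ should be phrased in terms of $k_{n^*}$; the rest of your argument is unaffected by the fix.
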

This produces an alternative proof of Theorem \ref{teo:LPintro} (\emph{cf.} \cite[Theorem 6.1]{LP}). The main difference between our result and Theorem \ref{teo:LPintro} comes from the fact that Theorem \ref{teoLP2} relates the growth-rate of the deviation to the mixing rate of the Markov chain associated with $\mathfrak{T}$ through \eqref{deltamixing}. The proof will be given at the end of the secion. 
In the remainder of this section, we denote  by $(\T_n=\T_n(X))_{n\in \NN}$ the sequence of derived tilings of $X$ associated to $\mathfrak{T}$. 

Now we introduce a decomposition argument to estimate the deviation 
of an $S$-patch on a $d$-cube $U$ of side $N$, but first we need some notation. For each $S>0$, 
define $n_0 = n_0(S)$ to be the smallest integer such that 
\[ Y,Z\in C_{n,i} - x \quad{\text{implies}\quad} Y\wedge \adh{B}_S(0) = Z\wedge \adh{B}_S(0)\]
for all $i\in\{1,\ldots,t_n\}$ and all $x\in D_{n,i}$. We check that $n_0$ is finite. Indeed, from the construction of the towers, it follows that if $Y$ and $Z$ 
are in $C_{n,i}-x$, then they coincide in a ball or radius 
\begin{equation}
\label{size}k_n-R_\text{ext}(\B_n)
\end{equation} around $0$. It suffices to show that \eqref{size} goes to infinity as $n$ goes to infinity. Indeed, from \eqref{RBn1} and \eqref{eqK1K2}, we get $R_\text{ext}(\B_n)\leq R(C_n) + s_{n}L/(K-1)$. From the definition of $k_n$ and since $K-1>L$ we conclude that \eqref{size} goes to infinity, and therefore $n_0$ is finite. 

From the definition of $n_0$, we check that if $n\geq n_0$, then the number of $S$-patches of a Delone set $Y$ that are equivalent to a given $S$-patch $\pp$  and have their centers inside $D_{n,i}$ is the same for all $Y\in C_{n,i}$. Therefore, we write 
this number as $n_\pp(D_{n,i})$ and let 
\[\operatorname{dev}_\pp(D_{n,i}):=n_\pp(D_{n,i})-\operatorname{freq}(\pp)\vol{D_{n,i}}.\] Next, we define  $n_1=n_1(U)$ to be the biggest integer $n$ such that there is a tile of $\T_n$ included in $U$. First, we prove
the following lemma. 
\begin{lemma}
\label{lem:keyconvergence}
For each $S$-patch $\pp$ of $X$, and for all $d$-cubes $U$ of side $N$ in $\RR^d$,
\[
|\operatorname{dev}_{\pp}(U)| = O\left(N^{d-1}\left(1+\sum_{n = n_0}^{n_1} s^{1-d}_{n+1} \max_{1\le i\le t_n}|\operatorname{dev}_{\pp}(D_{n,i})|\right)\right). 
\]
\end{lemma}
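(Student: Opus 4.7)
The plan is to telescope $\operatorname{dev}_\pp(U)$ along the levels of the tower system by greedily covering $U$ with tiles of $\T_n$ from coarse to fine, so that each tile used contributes exactly one term of the form $\operatorname{dev}_\pp(D_{n,i})$ to the total. For each $n_0\le n\le n_1$ I set
\[ V_n := \bigcup\{T \in \T_n : T \subseteq U\}.\]
Since $\B_{n+1}$ is zoomed out of $\B_n$, every tile of $\T_{n+1}$ is a union of tiles of $\T_n$, which gives $V_{n_1}\subseteq V_{n_1-1}\subseteq\cdots\subseteq V_{n_0}\subseteq U$. Consequently each difference $V_n\setminus V_{n+1}$ is a disjoint (modulo measure zero) union of tiles of $\T_n$, and these unions over $n_0\le n\le n_1$, together with the residual $R := U\setminus V_{n_0}$, partition $U$.

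On the counting side, if $T=\overline{D_{n,i}}+v$ is a tile of $\T_n(X)$ and $n\ge n_0$, then by the defining property of $n_0$ the number of $S$-patches of $X$ equivalent to $\pp$ with centers in $T$ is exactly $n_\pp(D_{n,i})$, depending only on $(n,i)$. Letting $a_{n,i}$ denote the number of tiles of type $i$ at level $n$ appearing in $V_n\setminus V_{n+1}$, adding contributions across levels and using $\vol{U}=\sum_{n,i}a_{n,i}\vol{D_{n,i}}+\vol{R}$ yields
\[ \operatorname{dev}_\pp(U) \;=\; \sum_{n=n_0}^{n_1}\sum_{i=1}^{t_n} a_{n,i}\,\operatorname{dev}_\pp(D_{n,i}) \;+\; \bigl(n_\pp(R)-\operatorname{freq}(\pp)\vol{R}\bigr).\]
The $D^*$-construction from Section \ref{sec:boxes} ensures that this decomposition is a genuine partition of $\RR^d$, so no patch centers are double-counted at tile boundaries.

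To bound the residual, I observe that $V_n$ contains every point of $U$ whose distance from $\partial U$ exceeds $R_\text{ext}(\B_n)\le K_2 s_n$; hence $U\setminus V_n$ lies in a boundary shell of width $O(s_n)$, which for a $d$-cube of side $N$ has volume $O(s_n N^{d-1})$. In particular $\vol{R}=O(s_{n_0}N^{d-1})=O(N^{d-1})$, absorbing $s_{n_0}$ (which depends only on $S$) into the constant. Since $X$ is uniformly discrete, $n_\pp(R)=O(\vol{R})$ as well, which supplies the constant term $O(N^{d-1})$ in the claimed estimate.

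For the tile count at each level, $V_n\setminus V_{n+1}\subseteq U\setminus V_{n+1}$ has volume $O(s_{n+1}N^{d-1})$ by the same shell estimate, whereas every tile of $\T_n$ has volume at least $\vol{B_{r_\text{int}(\B_n)}(0)}\ge c\,s_n^d$ by \eqref{eqK1K2}. Since $s_{n+1}=Ks_n$, this gives
\[ \sum_{i=1}^{t_n} a_{n,i} \;=\; O\!\left(\frac{s_{n+1}N^{d-1}}{s_n^d}\right) \;=\; O\!\left(s_{n+1}^{1-d}\,N^{d-1}\right),\]
and substituting back into the displayed identity yields the lemma. The crux of the argument is the coarse-to-fine greedy covering together with the boundary-shell volume bound; the only subtle point, which is not a real obstacle thanks to the $D^*$ partition from Section \ref{sec:boxes}, is verifying that patch centers on tile boundaries are not double-counted when identifying $n_\pp(T)$ with $n_\pp(D_{n,i})$.
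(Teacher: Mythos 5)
Your proof is correct and follows essentially the same strategy as the paper: a coarse-to-fine greedy cover of $U$ by tiles from $\T_{n_1}$ down to $\T_{n_0}$, identification of the contribution of each level-$n$ tile with $\operatorname{dev}_\pp(D_{n,i})$, and a boundary-shell estimate to count tiles per level and bound the residual. The only cosmetic differences are that you formulate the cover via the decreasing sets $V_n$ rather than the paper's recursion $P_n,Q_n$, and you bound $\#\{\text{level-}n\text{ tiles used}\}$ by dividing the shell volume by the minimum tile volume, whereas the paper counts the level-$(n+1)$ tiles meeting $\partial U$ (Lemma \ref{cntborde}) and multiplies by the uniform branching bound; both give $O(s_{n+1}^{1-d}N^{d-1})$.
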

\begin{lemma}\label{cntborde}
There exists $M> 0$ such that for every $d$-cube $U$ of side $N$ and every  $n\in\NN$ smaller than $n_1+1$, 
the number of tiles of $\T_{n}$ whose supports intersect  $U$ but they are not included in $U$  is bounded above by 
$MN^{d-1}s_n^{1-d}$. 
\end{lemma}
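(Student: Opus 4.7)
The plan is a boundary-volume pigeonhole argument using the uniform size bounds from Theorem \ref{thm:towersLR}. By construction, each tile of $\T_n$ has support of the form $\overline{D_{n,i}}+v$ with $i\in\{1,\ldots,t_n\}$ and $v\in \R_{C_{n,i}}(X)$; by that theorem, this support is contained in the ball of radius $R_\text{ext}(\B_n)\le K_2 s_n$ around $v$ and contains the ball of radius $r_\text{int}(\B_n)\ge K_1 s_n$ around $v$. In particular, every tile support has diameter at most $2K_2 s_n$ and volume at least $c_d (K_1 s_n)^d$, where $c_d:=\vol{B_1(0)}$.

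The first step is to show that a tile whose support meets both $U$ and its complement lies in a thin neighbourhood of $\partial U$. Picking $p$ in the support with $p\in U$ and $q$ with $q\notin U$, the diameter estimate yields $\|p-q\|\le 2K_2 s_n$; since the segment $[p,q]$ must cross $\partial U$, $p$ is within distance $2K_2 s_n$ of $\partial U$, and hence the whole support lies in the $4K_2 s_n$-neighbourhood $W$ of $\partial U$ in $\RR^d$.

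Next I would estimate $\vol{W}$. The hypothesis $n\le n_1$ provides a tile of $\T_n$ contained in $U$, and since such a tile contains a ball of radius $K_1 s_n$, we get $2K_1 s_n\le N$. Consequently $4K_2 s_n\le (2K_2/K_1)N$, and a routine slab computation gives $\vol{W}\le C_1 s_n N^{d-1}$ for a constant $C_1=C_1(d,K_1,K_2)$. By pairwise disjointness of the tile interiors, the number of tiles with support inside $W$ is at most $\vol{W}/(c_d K_1^d s_n^d)\le M N^{d-1} s_n^{1-d}$ with $M=C_1/(c_d K_1^d)$, which is exactly the bound in the statement.

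I do not anticipate a serious obstacle; the only potential worry is that the deformed tiles produced in Lemma \ref{lem:zooming} need not be connected, but the argument uses only the uniform diameter and volume bounds of their supports together with pairwise disjointness, so connectedness plays no role.
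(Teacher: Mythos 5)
Your proof is correct and uses essentially the same argument as the paper: tiles straddling $\partial U$ must lie in a tubular neighbourhood of $\partial U$ of thickness $O(K_2 s_n)$, the volume of this neighbourhood is $O(s_n N^{d-1})$ once $n\le n_1$ is used to control $s_n/N$, and then pigeonhole with the uniform lower volume bound $\geq c_d(K_1 s_n)^d$ on tile supports gives the count. The paper gets the slightly tighter neighbourhood $(\partial U)^{+2K_2 s_n}$ by locating a crossing point $r\in\partial U$ on the segment $[p,q]$ and noting $\|v-r\|\le K_2 s_n$ for the puncture $v$, rather than your $4K_2 s_n$ via triangle inequality from $p$; both give the same $O(N^{d-1}s_n^{1-d})$ bound.
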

\begin{proof}
Fix a cube $U$ of side  $N\in\NN$ and $n\in\NN$, $n\leq n_1+1$. Denote by $\mathcal{A}$ the set of tiles of $\mathcal{T}_{n}$ whose supports intersect  $U$ but they are not included in $U$. 
From \eqref{eqK1K2} it follows that  each tile in $\mathcal{A}$ contains a ball of radius $K_1s_n$ and is included in a  ball of radius $K_2s_{n}$. 
Since tiles in $\Aa$ do not overlap, this implies that 
\begin{equation}
\label{lemaleb1}
K_1^d s_n^d \leb{B_1(0)} \#\mathcal{A} \leq \leb{(\partial U)^{+2K_2s_{n}}},
\end{equation}
where  $(\partial U)^{+2K_2s_{n}}=\{x\in \RR^d \mid \textrm{dist}(x,\partial U)\leq 2K_2s_{n}\}$.
It is easy to check that
\begin{equation}
\label{lemaleb2}
\leb{(\partial U)^{+2K_2s_{n}}}\le 2 d K_2s_{n}(2K_2s_{n}+N)^{d-1}.
\end{equation}
By definition of $n_1$, there is a tile $T$ in $\T_{n_1}$ that is included in $U$. The tile $T$ contains, by \eqref{eqK1K2},   a ball of radius $K_1 s_{n_1}$ and  hence  
\begin{equation}
 \label{cotan1}
K_1s_{n_1} \leq N.
\end{equation}
It follows that $s_{n}/N\leq K_1^{-1}$ since $(s_{n})_{n\in\NN}$ is increasing. Hence \eqref{lemaleb2} implies 
\begin{equation}
\label{lemaleb3}
\leb{(\partial U)^{+2K_2s_{n}}}\leq 2 d K_2 \left(2\frac{K_2}{K_1} + 1 \right)^{d-1} N^{d-1}s_{n}.
\end{equation}
The conclusion now follows from \eqref{lemaleb1} and \eqref{lemaleb3} with $M$ being defined by  
\[M = 2 d K_2(K_1^{d} \leb{B_1(0)})^{-1} K\left(2\frac{K_2}{K_1} + 1 \right)^{d-1}.\]
\end{proof}

In the following proof, we abuse the notation and identify the tiles of $\T_n$ (which are decorated) with their undecorated versions. In particular, 
if $T = (\adh{D_{n,i}},i,v)$ is a tile of $\T_n$, we write $n_\pp(T)$ and $\leb{T}$ for $n_\pp(D_{n,i})$ and $\leb{D_{n,i}}$.
\begin{proof}[Proof of Lemma \ref{lem:keyconvergence}.]
Fix an $S$-patch $\pp$ of $X$ and a $d$-cube $U$ of side $N$ in $\RR^d$.
The idea of the proof is to decompose $U$ into smaller pieces that are tiles of $\T_n$ for some 
$n\in\{n_0,\ldots,n_1\}$. Since the tiles of $\T_n$ are tiled by tiles of $\T_m$ for all $m\leq n$, we ask this decomposition to contain tiles as big as possible. 

More precisely, we define 
\begin{align*}
P_{n_1} &= \{T\in\T_{n_1}\mid T  \subseteq U\}\quad\text{and}\\
Q_{n_1} &= \bigcup_{T\in P_{n_1}} T.
\end{align*}
For $n\in\{n_0,\ldots,n_1-1\}$, $Q_n$ and $P_n$ are defined recursively as follows
\begin{align*}
P_{n} &= \left\{ T\in \T_{n} \mid  T \subseteq \adh{U\setminus Q_{n+1}}\right\},\\
Q_{n} &= \bigcup_{T\in P_{n}} T.
\end{align*}

Now we estimate, for every $n \in \{n_0,\ldots, n_1\}$, the cardinality of $P_n$. Fix  $n \in \{n_0,\ldots, n_1\}$.
By definition,  each tile in  $P_n$  lies inside a tile of $\T_{n+1}$ whose support intersects  $U$ but it is not included in $U$. 
By Lemma \ref{cntborde} there is at most $MN^{d-1}s_{n+1}^{1-d}$ of these tiles for some constant $M>0$ that does not depend on $n$. By Property (ii) in Theorem \ref{thm:towersLR2} there is a uniform bound  $\alpha>0$ for the number of tiles in $\T_n$ that form a tile in $\T_{n+1}$.  
Hence,
\begin{equation}
\label{cotaUn}
\# P_n \leq M \alpha^d N^{d-1}s_{n+1}^{1-d}\quad \text{ for all } n\in\{n_0,\ldots,n_1\}.
\end{equation}

Let $W = U\setminus\cup_{n=n_0}^{n_1} Q_n$. Since the $Q_n$'s do not overlap, we have
\begin{align}
\label{volu}
\leb{U} =& \leb{W}   + \sum_{n=n_0}^{n_1}\sum_{T\in P_n} \leb{T}.
\end{align}
Moreover, since for every $T\in \T_n$,  $X\cap \partial T = \emptyset$ we have that $n_{\pp}(T)=n_{\pp}(\interior{T})$. Hence,
\begin{align}
\label{occu}
n_{\pp}(U) =& n_{\pp}(W) + \sum_{n=n_0}^{n_1}\sum_{T\in P_n} n_{\pp}(T).
\end{align}
By definition of derived tiling, every tile in  $\T_n$ is a translation of some tile $T_{n,i}$ for some $i\in\{1,\ldots,t_n\}$. Since $n \geq n_0$ we obtain that $n_{\pp}(T)=n_{\pp}(T_{n,i})$. Hence
\begin{equation}
\label{errort}
|n_{\pp}(T) - \leb{T}\freq(\pp)|\leq \max_{i\in\{1,\ldots, t_n\}}\left|n_{\pp}(D_{n,i}) - \leb{D_{n,i}}\freq(\pp)\right|
\end{equation}
for every $t\in P_n$. Thus, from \eqref{occu}, \eqref{volu} and \eqref{errort} we obtain
\begin{equation}
\label{LP1}
\begin{split}
|n_{\pp}(U) - \freq(\pp)\leb{U}|\leq& \sum_{n=n_0}^{n_1} \# P_n \max_{i\in\{1,\ldots, t_n\}}|n_{\pp}(D_{n,i}) - \leb{D_{n,i}}\freq(\pp)|
\\&+|n_{\pp}(W)-\leb{W}\freq(\pp)|.
\end{split}
\end{equation}
By Lemma \ref{cntborde}, 
\begin{equation}
\label{errorborde}
\leb{W} \leq M N^{d-1}s_{n_0}^{1-d}  \max_{i\in\{1,\ldots ,t_{n_0}\}} \leb{D_{n_0,i}}. 
\end{equation}
Then, replacing \eqref{errorborde} and \eqref{cotaUn} into \eqref{LP1} gives the conclusion of the Lemma. 
\end{proof}

The next lemma allows us to estimate $\operatorname{dev}_\pp(D_{n,i})$ in terms of the coefficients
of the transition matrices of $\mathfrak{T}$ (\emph{cf.} Section \ref{sec:markov}). 
\begin{lemma}
\label{lem:decnp}
For all $n\geq n_0$ and $i\in\{1,\ldots,t_n\}$, 
\begin{equation*}
\operatorname{dev}_\pp(D_{n,i}) = \sum_{k=1}^{t_{n_0}}n_\pp(D_{n_0,k})(p_{ik}^{(n,n_0)}-\leb{D_{n,i}}\nu(C_{n_0,k})).
\end{equation*}
\end{lemma}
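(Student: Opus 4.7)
The plan is to decompose $D_{n,i}$ into translated copies of the level-$n_0$ tiles $D_{n_0,k}$, count occurrences of $\pp$ additively on this decomposition, and then express $\freq(\pp)$ via the transverse measure.

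First I would iterate property (Z.4) downward from level $n$ to level $n_0$ to obtain
\[
\adh{D_{n,i}} = \bigcup_{k=1}^{t_{n_0}} \bigcup_{x \in \widetilde{O}_{i,k}^{(n,n_0)}} \adh{D_{n_0,k}} + x,
\]
where the sets $\widetilde{O}_{i,k}^{(n,n_0)}$ of level-$n_0$ occurrences inside $D_{n,i}$ satisfy $\#\widetilde{O}_{i,k}^{(n,n_0)} = p_{ik}^{(n,n_0)}$ (this is exactly the combinatorial content of multiplying transition matrices), and the sets on the right-hand side have pairwise disjoint interiors. A straightforward volume-additivity argument then yields $\vol{D_{n,i}} = \sum_k p_{ik}^{(n,n_0)} \vol{D_{n_0,k}}$, which iterates \eqref{eq:vol}.

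Next, since $n\geq n_0\geq n_0(S)$, the count $n_\pp(D_{n_0,k})$ is well defined (independent of the representative $Y\in C_{n_0,k}$), and occurrences of $\pp$ with center on the internal boundaries between level-$n_0$ tiles do not contribute because $X\cap \partial T = \emptyset$ for every tile $T$ of a derived tiling. Adding across the decomposition above gives
\[
n_\pp(D_{n,i}) = \sum_{k=1}^{t_{n_0}} p_{ik}^{(n,n_0)}\, n_\pp(D_{n_0,k}).
\]

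Then I would express $\freq(\pp)$ in terms of the transverse measure $\nu$. Let $C_\pp \subseteq \Omega^0$ be the clopen set of Delone sets in $\Omega$ having an occurrence of $\pp$ centered at the origin. By unique ergodicity (Corollary \ref{cor:unique_erg}) and the standard identification between patch frequencies and transverse measures of cylinders (see \cite{LMS}), one has $\freq(\pp) = \nu(C_\pp)$. Decomposing $C_\pp$ according to the box decomposition $\B_{n_0}$: for each $k$, the set $C_\pp \cap B_{n_0,k}$ is the translate by vectors in $D_{n_0,k}$ of the finitely many sub-transversals of $C_{n_0,k}$ corresponding to each occurrence of $\pp$ in $D_{n_0,k}$. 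Transverse invariance of $\nu$ and the relation $\mu(C_{n_0,k}[D_{n_0,k}]) = \vol{D_{n_0,k}}\nu(C_{n_0,k})$ then give
\[
\freq(\pp) = \nu(C_\pp) = \sum_{k=1}^{t_{n_0}} n_\pp(D_{n_0,k})\, \nu(C_{n_0,k}).
\]

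Finally, combining the two displays,
\[
\operatorname{dev}_\pp(D_{n,i}) = n_\pp(D_{n,i}) - \freq(\pp)\vol{D_{n,i}} = \sum_{k=1}^{t_{n_0}} n_\pp(D_{n_0,k})\left(p_{ik}^{(n,n_0)} - \vol{D_{n,i}}\nu(C_{n_0,k})\right),
\]
which is the claim. The only subtle step is the identification $\freq(\pp) = \nu(C_\pp)$ and the clean decomposition of $C_\pp$ along the box decomposition; the iteration of (Z.4) and the additivity of $n_\pp$ are direct.
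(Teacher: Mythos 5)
Your proposal is correct and follows essentially the same route as the paper's proof: derive the additivity identity $n_\pp(D_{n,i}) = \sum_{k} p_{ik}^{(n,n_0)}\, n_\pp(D_{n_0,k})$ from (Z.4), establish $\freq(\pp) = \nu(C_\pp) = \sum_{k} n_\pp(D_{n_0,k})\,\nu(C_{n_0,k})$ by decomposing $C_\pp$ along copies of the $C_{n_0,k}$, and subtract $\leb{D_{n,i}}$ times the second identity from the first. The only superficial difference is that you describe the decomposition of $C_\pp$ via the box decomposition $\B_{n_0}$ while the paper phrases it as counting copies of $C_{n_0,k}$ inside $C_\pp$, but these are the same observation.
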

\begin{proof}
Suppose that $\pp = X \wedge B_S(x)$ with $x\in X$. It is well-known (see e.g.  \cite{LMS}) that $\operatorname{freq}(\pp)=\nu(C_\pp)$, where 
\[C_\pp = \{Y\in\Omega \mid Y\wedge B_S(0) = (X - x) \wedge B_S(0)\}.\]
On the one hand, from  \eqref{On}, the definition of $p_{ik}^{(n,n_0)}$ (\emph{cf.} Section \ref{sec:markov}) and the additivity of 
$n_\pp$ we deduce that 
\begin{equation}
\label{eq:decn}
n_\pp(D_{n,i}) = \sum_{k=1}^{t_{n_0}}n_\pp(D_{n_0,k})p_{ik}^{(n,n_0)}.
\end{equation}
On the other hand, since there is no occurrence of $\pp$ in the border of a box of $\B_{n_0}$, for every $Y\in C_\pp$, 
there are $k\in \{1,\ldots,t_{n_0}\}$, $Z\in C_{n_0,k}$ and $z\in D_{n_0,k}$ such that $Z - z \in C_\pp$. Moreover, 
the number of $z$ as above such that $Z - z \in C_\pp$ (with $k$ and $Z$ fixed) is exactly $n_\pp(D_{n_0,k})$. It follows
from the definition of $n_0$ that there are exactly $n_\pp(D_{n_0,k})$ copies of $C_{n_0,k}$ inside $C_\pp$ for all $k$ and hence 
\begin{equation}\label{eq:decC}
\nu(C_\pp) = \sum_{k=1}^{t_{n_0}} n_\pp(D_{n_0,k})\nu(C_{n_0,k}). 
\end{equation}
The conclusion now follows by   \eqref{eq:decC} multiplied by $\leb{D_{n,i}}$ from \eqref{eq:decn}.
\end{proof}

The last lemma before the proof of Theorem \ref{teoLP2} estimates the deviation of $p_{i,k}^{(n,n_0)}/\leb{D_{n,i}}$ with respect to its limit $\nu(C_{n_0,k})\vol{D_{n_0,k}}$; 
in terms of the mixing rate $c_\mathfrak{T}$ of the transition matrices. 
\begin{lemma}
\label{lem:ue}
For every $n > m\ge 0$ we have
 \[\max\limits_{\substack{1\le j \le t_{m}\\ 1\le i \le t_{n}}} 
\left|\frac{p_{i,j}^{(n,m)}}{\leb{D_{n,i}}}-\nu(C_{m,j})\right|= O(\nu(C_{m})c_\mathfrak{T}^{n-m}).\]
\end{lemma}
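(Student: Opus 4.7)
The plan is to reduce the statement to the row-contraction estimate for the product matrices $Q(n,m)$ provided by Lemma \ref{lem:c}, whose contraction constant is controlled by $c_\mathfrak{T}$ via Lemma \ref{lem:Qc}. The first step is purely algebraic: using \eqref{def:qnm} to substitute $p_{i,j}^{(n,m)} = \nu(C_{m,j})\,q_{j,i}^{(n,m)}/\nu(C_{n,i})$, and abbreviating $\mu(B_{n,i}) = \leb{D_{n,i}}\nu(C_{n,i})$, one obtains by a direct manipulation the identity
\begin{equation}
\label{plan:id}
\frac{p_{i,j}^{(n,m)}}{\leb{D_{n,i}}} - \nu(C_{m,j}) \;=\; \frac{\nu(C_{m,j})}{\mu(B_{n,i})}\bigl(q_{j,i}^{(n,m)} - \mu(B_{n,i})\bigr).
\end{equation}
This factorization splits the problem into (a) a sharp contraction bound on $|q_{j,i}^{(n,m)} - \mu(B_{n,i})|$ and (b) a uniform positive lower bound on $\mu(B_{n,i})$.

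For (a), I would exploit the translation invariance of $\mu$, which, in the language of the Markov chain of Proposition \ref{lem:speed}, reads $\mu(B_{n,i}) = \sum_l q_{l,i}^{(n,m)}\mu(B_{m,l})$ with $\sum_l \mu(B_{m,l}) = 1$. Hence $\mu(B_{n,i})$ is a convex combination of the entries $\{q_{l,i}^{(n,m)}\}_l$, and Lemma \ref{lem:c} together with Lemma \ref{lem:Qc} immediately yields $|q_{j,i}^{(n,m)} - \mu(B_{n,i})| \leq c_\mathfrak{T}^{n-m}$. For (b), I would observe that the proof of Lemma \ref{lem:Qc} actually produces the pointwise bound $q_{j,i}^{(n)} \geq \|M_n\|_1^{-1}\|M_{n+1}\|_1^{-1} \geq 1 - c_\mathfrak{T}$; inserting this into the one-step invariance relation $\mu(B_{n,i}) = \sum_j q_{j,i}^{(n)}\mu(B_{n-1,j})$ gives $\mu(B_{n,i}) \geq 1 - c_\mathfrak{T}$ for every $n\geq 1$ and every $i$. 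Combining these two ingredients with the trivial inequality $\nu(C_{m,j}) \leq \nu(C_m)$ and substituting into \eqref{plan:id} produces the desired bound with explicit $O$-constant $(1-c_\mathfrak{T})^{-1}$.

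The step I expect to be the main obstacle is the uniform lower bound on $\mu(B_{n,i})$ in (b). A naive pigeonhole argument from $\sum_i\mu(B_{n,i})=1$ and $t_n\leq M$ only controls the maximum, not the minimum, and generic unique-ergodicity arguments do not manifestly give such uniform control. The resolution relies on the fact that the very definition of $c_\mathfrak{T}$ forces every transition kernel $Q_n$ to be uniformly positive entrywise, and this pointwise positivity then transfers through the Markov invariance relation to a uniform positive lower bound on $\mu(B_{n,i})$.
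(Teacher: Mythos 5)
Your proposal is correct and shares the same skeleton as the paper's argument: the factorization \eqref{plan:id} (which is exactly what one gets by rearranging \eqref{def:qnm}), the mixing bound on $|q_{j,i}^{(n,m)}-\mu(B_{n,i})|$ (your step (a) is precisely \eqref{eq:speed} in Proposition \ref{lem:speed}, so you could cite it outright rather than re-derive it), and a uniform positive lower bound on $\mu(B_{n,i})=\leb{D_{n,i}}\nu(C_{n,i})$. The genuine divergence is in step (b). The paper proves the lower bound \eqref{mea_box}, namely $\mu(B_{n,i})\geq 1/M$ with $M=\sup_n\|M_n\|_\infty\|M_{n+1}\|_\infty$, by running an $\|\cdot\|_\infty$-norm computation parallel to the one in Lemma \ref{lem:Qc} (using \eqref{eq:vol} instead of \eqref{eq:tran}) and then combining with \eqref{eq:tran} and the normalization \eqref{eq:meas}. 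You instead recycle the entrywise estimate $q_{j,i}^{(n)}\geq\|M_n\|_1^{-1}\|M_{n+1}\|_1^{-1}$ already established inside the proof of Lemma \ref{lem:Qc} and push it through the one-step Markov invariance $\mu(B_{n,i})=\sum_j q_{j,i}^{(n)}\mu(B_{n-1,j})$ to obtain $\mu(B_{n,i})\geq 1-c_\mathfrak{T}$. This is a tidier route: it avoids re-deriving a parallel ratio estimate, it exposes the $O$-constant as $(1-c_\mathfrak{T})^{-1}$, and it makes clear that the lower bound on $\mu(B_{n,i})$ is a direct consequence of the entrywise positivity that drives the mixing.

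One caveat you should make explicit: the inequality $\|M_n\|_1^{-1}\|M_{n+1}\|_1^{-1}\geq 1-c_\mathfrak{T}$ on which your step (b) rests requires $1-c_\mathfrak{T}=\inf_n\|M_n\|_1^{-1}\|M_{n+1}\|_1^{-1}$. The paper's displayed definition of $c_\mathfrak{T}$ in Section \ref{sec:markov} has a $\sup$ there, with which the inequality would go the wrong way; but that $\sup$ is evidently a typo, since Lemma \ref{lem:Qc} as stated ($\sup_n c(Q_n)\le c_\mathfrak{T}$) can only hold with the $\inf$ version. Your reading is the intended one, but it is worth flagging that you are silently correcting the definition.
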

\begin{proof}
Let $M:=\sup_{n\in\NN^*}\norm{M_{n}}_\infty\norm{M_{n+1}}_\infty>1$. We prove that
for every $n\in\NN$ and $1\leq i \leq t_n$ we have 
\begin{equation}
\label{mea_box}
\leb{D_{n,i}}\nu(C_{n,i})\ge \frac{1}{M}.
\end{equation}
Indeed, by an argument analog to the one used in the proof of Lemma \ref{lem:Qc}, we get
\begin{equation}
\label{eq:proofue}
\frac{\leb{D_{n+1,k}}}{\leb{D_{n,i'}}} 
\leq \norm{M_{n+1}}_\infty \norm{M_{n}}_\infty
\end{equation}
for all $n>0$, $i'\in\{1,\ldots,t_n\}$ and $k\in\{1,\ldots,t_{n+1}\}$. Now, 
from \eqref{eq:tran} and $m_{i,j}^{(n)}\geq 1$ we deduce that $\nu(C_{n,i})\ge \sum_{j=1}^{t_{n+1}} 
\nu(C_{n+1,j})$. Hence, we have
\[\leb{D_{n,i}}\nu(C_{n,i})\geq \sum_{j=1}^{t_{n+1}}\leb{D_{n+1,j}} \nu(C_{n+1,j})\frac{\leb{D_{n,i}}}{\leb{D_{n+1,j}}}.\]
Thus, replacing  \eqref{eq:proofue} and \eqref{eq:meas} in the last inequality we get \eqref{mea_box}. Finally,  
the conclusion of the lemma follows from \eqref{def:qnm}, \eqref{eq:speed} and \eqref{mea_box}.
\end{proof}

\begin{proof}[Proof of Theorem \ref{teoLP2}]
We do the proof for the case $\delta_\mathfrak{T}<1$ (the other cases giving better estimates). By Lemma \ref{lem:keyconvergence}),
it suffices us to show that
\begin{equation}
\label{eq:propflp1}
\sum_{n = n_0}^{n_1} s^{1-d}_{n} \max_{1\le i\le t_n}|\operatorname{dev}_\pp(D_{n,i})| = O(N^{1-\delta_{\mathfrak{T}}}).
\end{equation}
Indeed, using  Lemma \ref{lem:decnp} and Lemma \ref{lem:ue} 
we obtain 
\begin{equation*}
|\operatorname{dev}_\pp(D_{n,i})| = \sum_{k=1}^{t_{n_0}}n_\pp(D_{n_0,k})O\left(c_\mathfrak{T}^{n-n_0} \leb{D_{n,i}} \right).
\end{equation*}
Recall that from \eqref{eqK1K2} we have $\leb{D_{n,i}} = O(s_n^d)$. Hence the left-hand side of \eqref{eq:propflp1}
can be estimated as 
\begin{equation*}
\sum_{n = n_0}^{n_1} s^{1-d}_{n} \max_{1\le i\le t_n}|\operatorname{dev}_\pp(D_{n,i})| = 
\sum_{n = n_0}^{n_1} s^{1-d}_{n} \sum_{k=1}^{t_{n_0}}n_\pp(D_{n_0,k})O\left(c_\mathfrak{T}^{n-n_0} s_n^d \right)
 = \sum_{n = n_0}^{n_1} O\left(s_n c_\mathfrak{T}^{n-n_0}\right).
\end{equation*}
Plugging $s_n = K^{n-n_0}s_{n_0}$ and using $K^{1-\delta_\mathfrak{T}}>1$ in the last equation we obtain
\begin{equation*}
\sum_{n = n_0}^{n_1} s^{1-d}_{n} \max_{1\le i\le t_n}|\operatorname{dev}_\pp(D_{n,i})| = 
  O\left( (K^{1-\delta_\mathfrak{T}})^{n_1-n_0+1}\right).
\end{equation*}
Finally, the conclusion follows from \eqref{cotan1} in the proof of Lemma \ref{cntborde}.
\end{proof}

\subsection*{Acknowledgements} Both authors would like to thank Jarek Kwapisz for several comments and suggestions. J.A.-P. would also like to thank to the Erwin Scrh\"odinger Institute in Vienna, where part of this work was done while J.A.-P. was a Junior Research Fellow. J.A.-P. acknowledges support from Fondecyt Postdoctoral Fund 3100097 and D.C. acknowledges support from Fondecyt Postdoctoral Fund 3100092. 

\providecommand{\bysame}{\leavevmode\hbox to3em{\hrulefill}\thinspace}
\providecommand{\MR}{\relax\ifhmode\unskip\space\fi MR }
\providecommand{\MRhref}[2]{%
  \href{http://www.ams.org/mathscinet-getitem?mr=#1}{#2}
}
\providecommand{\href}[2]{#2}

\end{document}